\setlist{nolistsep} 
\newcommand*{\sheafhom}{\mathcal{H}\kern -.5pt om}
\numberwithin{equation}{section} % Number equations within sections (i.e. 1.1, 1.2, 2.1, 2.2 instead of 1, 2, 3, 4)
\numberwithin{figure}{section} % Number figures within sections (i.e. 1.1, 1.2, 2.1, 2.2 instead of 1, 2, 3, 4)
\numberwithin{table}{section} % Number tables within sections (i.e. 1.1, 1.2, 2.1, 2.2 instead of 1, 2, 3, 4)
\newtheorem{thm}{Theorem}[section]
\newtheorem{cor}[thm]{Corollary}
\newtheorem{prop}[thm]{Proposition}
\newtheorem{lem}[thm]{Lemma}
\newtheorem{quest}[thm]{Question}
\theoremstyle{definition}
\newtheorem{defn}[thm]{Definition}
\newtheorem{exmp}[thm]{Example}
\theoremstyle{remark}
\newtheorem{rem}[thm]{Remark}
\DeclareMathOperator{\St}{st}
\DeclareMathOperator{\lk}{lk}
\DeclareMathOperator{\Cone}{Cone}
\DeclareMathOperator{\Span}{Span}
\DeclareMathOperator{\Int}{int}
\DeclareMathOperator{\N}{\mathbb{N}}
\newcommand{\horrule}[1]{\rule{\linewidth}{#1}} % Create horizontal rule command with 1 argument of height
\title{	
	\normalfont \normalsize 
	\textsc{} \\ [25pt] % Your university, school and/or department name(s)
	\horrule{0.5pt} \\[0.4cm] % Thin top horizontal rule
	\huge Signature 0 toric varieties, wall crossings, and cross polytope-like structures

	\horrule{2pt} \\[0.5cm] % Thick bottom horizontal rule
}
\author{Soohyun Park } % Your name
\author{Soohyun Park \\ \href{mailto:soohyun.park@mail.huji.ac.il}{soohyun.park@mail.huji.ac.il} } % Your name
\date{\normalsize October 6, 2025} % Today's date or a custom date
\begin{document}
	
	\maketitle 
	
	\begin{abstract}
		
		\noindent We describe the structure of simplicial locally convex fans associated to even-dimensional complete toric varieties with signature 0. They belong to the set of such toric varieties whose even degree Betti numbers yield a top gamma vector component equal to 0. The gamma vector is an invariant of palindromic polynomials whose nonnegativity lies between unimodality and real-rootedness. It is known (and expected more generally) that the cases where this top component is 0 are among the ``building blocks'' of those where it is nonnegative. This means minimality with respect to a certain restricted class of blowups. However, this equality to 0 case is currently poorly understood. In the course of addressing this situation, we find that this interpretation encodes \emph{intrinsic} combinatorial information on the fan in addition to earlier compatibility with existing natural combinatorial examples. \\
		
		\noindent Our main method uses wall crossings. The links of the fan come from a repeated suspension of the maximal linear subspace in its realization in the ambient space of the fan. Conversely, the centers of these links containing any particular line form a cone or a repeated suspension of one. The intersection patterns between these ``anchoring'' linear subspaces come from how far certain submodularity inequalities are from equality and parity conditions on their dimensions. This involves linear dependence and containment relations between them which are connected to optimization. We obtain these relations by viewing the vanishing of certain mixed volumes from the perspective of the exponents. Finally, these wall crossings yield a simple method of generating induced 4-cycles covering the minimal objects described above. We intersect rational equivalence relations with 2-dimensional orbit closures instead of 1-dimensional ones as in most combinatorial applications. \\

	\end{abstract}

	\section*{Introduction}

	Our main objective is to determine structural properties of toric varieties associated to simplicial locally convex fans (p. 255 and 259 of \cite{LR}) whose signature is equal to 0. These contain the set of such toric varieties where the top component of the gamma vector of the vector recording its even degree Betti numbers are equal to 0. The nonnegativity of the gamma vector of a palindromic polynomial is a positivity property lying between unimodality and real-rootedness. It appears in a wide range of combinatorial and geometric contexts ranging from permutation statistics to the Euler characteristics of nonpositively curved piecewise Euclidean manifolds (\cite{Athgam}, \cite{CD}). Note that the geometry of flag simplicial spheres ties many of these examples together. We are interested in an instance of the following question: \\

	\begin{quest}
		Given a positivity property of a (palindromic) polynomial (e.g. unimodality, gamma positivity, real-rootedness) and some initial constraints, what can be said about the starting polynomial or ``boundary conditions''? If we started with the $h$-polynomial of some simplicial complex, what can be said about the underlying simplicial complex? \\
	\end{quest}
	
	\color{black}
	
	Among flag simplicial spheres with nonnegative gamma vector such as simplicial complexes associated to fans of the toric described above, the ones with top gamma vector component equal to 0 are expected to be among the ``building blocks'' of the others by work of Lutz--Nevo \cite{LN}. To be more specific, this means being ``minimal'' with respect to edge subdivisions (Main Theorem 1.2 on p. 70 and 77 of \cite{LN} and (99) on p. 36 of \cite{Athgam}). In the setting of toric varieties, this means minimality with respect to a restricted class of blowups. \\

	However, the case where the top gamma vector component is equal to 0 is poorly understood and it is difficult to find results on it in the literature. One interesting piece of recent work where the equality case has been studied is by D'Al\`i--Juhnke-Kubitze--K\"ohne--Venturello \cite{DJJKKV} in the case of symmetric edge polytopes. In the subcritical regime of the Erd\H{o}s--R\'enyi model applied to the input graph, they find that the graph is a forest and the gamma vector is 0 with probability 1. The associated polytopes are free sums (see p. 1 of \cite{CD}) of cross polytopes coming from forests (Theorem 5.4 and Remark 5.5 on p. 507 -- 508 of \cite{DJJKKV}). Note that cross polytopes themselves are free sums of lines. By the work of Lutz--Nevo \cite{LN} mentioned above, they are generally expected to come from flag simplicial spheres where every edge of the associated simplicial complex is contained in an induced 4-cycle (Theorem 1.2 on p. 70 and 77 and p. 80 of \cite{LN}). That being said, it is difficult to describe when these 4-cycles (or ones containing every vertex) occur in an explicit manner. \\
	
	In order to extract concrete structural properties of cases where the top gamma vector component is equal to 0, we study the only known case where a component of the gamma vector has an algebro-geometric interpretation. More specifically, it was shown by Leung--Reiner \cite{LR} that the top component of the gamma vector is the signature of a toric variety $X_\Sigma$ of even dimension $d$ when the $h$-vector input records the even degree Betti numbers of the toric variety $X_\Sigma$ associated to a simplicial locally convex fan $\Sigma$ (p. 255 and 259 of \cite{LR}) that is the normal fan of a simple rational polytope (Theorem 1.1 on p. 255 of \cite{LR}). This signature being equal to 0 is equivalent to all degree $d$ monomials in the Chow ring of $X_\Sigma$ that have even degree on each ray divisor that and are supported on $\le \frac{d}{2}$ ray divisors being equal to 0 (Lemma 3.1 on p. 262, Lemma 3.2 and proof of Theorem 1.2(i) on p. 264 of \cite{LR}). In this setting, the same earlier recursive argument ((99) on p. 36 of \cite{Athgam}) implies that the signature 0 objects are minimal with respect to the restricted class of blowups mentioned above. \\
	
	Here, it is more natural to view these monomials as terms involving restrictions of conormal bundles to intersections of ray divisors $D_{i_1} \cap \cdots \cap D_{i_p}$ for $1 \le p \le \frac{d}{2}$ with odd exponents on each conormal bundle restriction. This is because the local convexity condition implies that these conormal bundle restrictions are globally generated (which was the main property used to show nonnegativity in \cite{LR}) and can be expressed as degree $d - p$ mixed volumes involving their divisor polytopes. In addition, we would like to have a more ``intrinsic'' understanding of how these geometric properties are linked to the combinatorial fan structure of $\Sigma$ beyond the compatibility with combinatorial examples described in Section 4 on p. 269 -- 278 of \cite{LR}. \\
	
	Our main tool for determining when these mixed volume terms vanish by studying the orthogonal complements of the divisor polytopes of these conormal bundle restrictions using wall crossings (Corollary \ref{mixvol0toconorm0}). Aside from yielding combinatorial structural properties of the underlying fan (Theorem \ref{sig0fanstr}), it will give a simple method of generating induced 4-cycles using intersections of conormal bundle restrictions (Lemma \ref{zeroconorm4cycle}) and connections to algebraic properties (Corollary \ref{suspalg}, Remark \ref{suspspecnon}). \\
	
	The motivation for applying these results to construct suspension structures was that vanishing of all conormal bundles on a complete fan implying a cross polytope structure of the fan from a self-product of copies of $\mathbb{P}^1$ (Proposition \ref{vanconormcross}). Afterwards, we combine this information with a generalization (Remark \ref{extaffequiv}, Corollary \ref{spanflatlink}, Remark \ref{flatambvar}) of the interpretation of graphs of supports of conormal bundles of these toric varieties by Leung--Reiner (Proposition A.1 on p. 282 -- 283 of \cite{LR}) in terms of maximal linear subspaces contained in lifts of links in certain realization spaces. As a result, we find that vanishing of conormal bundle restrictions generally yields a suspension structure around the ``center'' ray of the conormal bundle (Proposition \ref{conormrestrsusp}). \\
	
	Using this information, we can describe the realizations of the links as suspensions over linear subspaces (Corollary \ref{suspspecnon}, Proposition \ref{sign0specialcov}). In fact, the way these cones fit together can be described in terms of individual cones or repeated suspensions of them (Proposition \ref{conormrestrsusp}, Proposition \ref{sign0specialcov}). Finally, we view (non)vanishing of mixed volumes of polytopes from the perspective of the \emph{exponents} to find linear dependence and containment relations among the maximal linear subspaces inside realizations of the links that ``anchor'' these suspensions (Corollary \ref{oddtupzerovoltot}). We note that they come from equality cases of submodularity inequalities or cases where we are not far from equality and certain parity conditions. While these results apply to a rather general class of generalized permutohedra (see conditions for Proposition \ref{oddalgoutput}), our focus is eventually on a particular specialization of the polytopes. On the other hand, there is a general framework for dual mixed volumes (which we can view in the context of both the exponents and polar duals) in recent work of Gao--Lam--Xue \cite{GLX}. The results described are summarized below: \\
	
	\begin{thm} \textbf{(Structural properties of signature 0 toric varieties) \\} \label{sig0fanstr}
		Suppose that $X_\Sigma$ is a complete toric variety of even dimension $d$ associated to a simplicial locally convex fan $\Sigma$ (p. 255 and 259 of \cite{LR}) and has signature $\sigma(X_\Sigma) = 0$. \\
		\begin{enumerate}[topsep=0pt]
			\item The support of the link $|\lk_\Sigma(\rho)| \subset N_{\mathbb{R}}$ has the structure of a repeated suspension of the maximal linear subspace in its realization in $N_{\mathbb{R}}$. Conversely, the centers of links $\lk_\Sigma(\alpha)$ containing a particular line in their realizations in $N_{\mathbb{R}}$ form a cone or a repeated suspension of one. (Corollary \ref{suspspecnon}, Proposition \ref{sign0specialcov}) \\
			
			\item The results of Part 1 can be generalized to linear subspaces contained in realizations of links over $p$-cones $\sigma_{i_1, \ldots, i_p} \in \Sigma(p)$ in lifts from $N(\sigma_{i_1, \ldots, i_p})_{\mathbb{R}}$ to $N(\omega)_{\mathbb{R}}$ for a cone $\omega \subset \sigma_{i_1, \ldots, i_p}$ generated by a subcollection of the minimal generators $\rho_{i_1}, \ldots, \rho_{i_p}$ of $\sigma_{i_1, \ldots, i_p}$ for $1 \le p \le \frac{d}{2}$. An example on realizations of $p$-cones in different spaces is given in Remark \ref{extaffequiv}. (Proposition \ref{sign0specialcov}, Corollary \ref{suspspecnon}) \\ 
			
			\item Note that the mixed volume 0 conditions equivalent to $\sigma(X_\Sigma) = 0$ are trivial unless the rays involved are pairwise non-special (i.e. not in maximal linear subspaces of realizations of respective links -- see Definition \ref{specnonspecdef} and Corollary \ref{mixvol0toconorm0}). Assuming this, there is a ``dichotomy'' of structures of realizations of links $\lk_\Sigma(\sigma_{i_1, \ldots, i_p})$ in $N(\omega)_{\mathbb{R}}$ for a cone $\omega \subset \sigma_{i_1, \ldots, i_p}$ generated by a subcollection of the minimal generators $\rho_{i_1}, \ldots, \rho_{i_p}$ of $\sigma_{i_1, \ldots, i_p}$ between repeated suspensions and cones or repeated suspensions of a cone around a linear subspace in the realization when $1 \le p \le 
			\frac{d}{2}$. This is based on structural properties of the $p = 2$ case that apply to general $1 \le p \le \frac{d}{2}$. (Corollary \ref{p2conorm}, Corollary \ref{pconelikep2}) \\
			\pagebreak 
			\item To make full use of the vanishing mixed volumes resulting from $\sigma(X_\Sigma) = 0$ for $1 \le p \le \frac{d}{2}$ in general, we view the (non)vanishing of mixed volumes from the perspective of the \emph{exponents}. In the $p = 2, 3,$ and $\frac{d}{2}$ relations between maximal linear subspaces contained in link along with some parity conditions. Note that the former follow from equality cases of submodularity inequalities involving dimensions of Minkowski sums of (divisor) polytopes. \\
			
			\noindent The results here are specializations of ones that apply to more general classes of generalized permutohedra. Steps involved are connected to induction on dimension (Proposition \ref{minbaselift}) and optimization-related properties (Remark \ref{optpov}). Vanishing of all monomials supported on $p$ variables with odd exponents on each nef divisor involved for $1 \le p \le \frac{d}{2}$ generally seems to involve a mixture of the properties observed for the values $p = 2, 3, \frac{d}{2}$ that ``interpolates'' between them (see Corollary \ref{oddtupzerovoltot} for more details). \\
			
			\noindent For example, consider a pair of polytopes $P$ and $Q$ with $P \supset Q$ where $Q$ the Minkowski sum of a subcollection of the polytopes whose Minkowski sum is $P$. Denote by $\omega_X$ the orthogonal complement of a polytope $X$. In our setting, the orthogonal complements correspond to vector subspaces (i.e. ``flat parts'') of realizations of links in the spaces that we lift to. Relevant identities include those of the form $\omega_Q \subset \omega_P + \omega_R$ for a new polytope $R$ from equality cases of the submodularity inequality (e.g. $p = 2, 3$) and dependence properties of the form $\omega_P \subset \omega_R$ from the $p = \frac{d}{2}$ case. (Proposition \ref{p2mvodd}, Example \ref{submodineqeqgeom}, Proposition \ref{p3algsubmodex}, Proposition \ref{pdover2}, Corollary \ref{oddtupzerovoltot}) \\
		\end{enumerate}
	\end{thm}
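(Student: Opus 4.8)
The plan is to assemble the four parts from the component results cited in each, organizing everything around the single mechanism that drives them: the Leung--Reiner equivalence between $\sigma(X_\Sigma) = 0$ and the vanishing of every degree $d$ monomial in the Chow ring of $X_\Sigma$ that carries even exponent on each ray divisor and is supported on at most $\frac{d}{2}$ ray divisors. First I would recast this vanishing in conormal-bundle language. Writing such a monomial as $\prod_{j} D_{i_j}^{2a_j}$ and factoring out one copy of each $D_{i_j}$ to restrict to the intersection $D_{i_1} \cap \cdots \cap D_{i_p}$, the surviving factors become odd powers $D_{i_j}^{2a_j - 1}$ of restrictions of conormal bundles; local convexity makes these restrictions globally generated, so each monomial is computed by a degree $d-p$ mixed volume of the associated divisor polytopes. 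This is precisely the reduction underlying Corollary \ref{mixvol0toconorm0}, which converts the vanishing of such a mixed volume into a vanishing statement for the conormal bundle restriction.

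For Part 1 I would specialize to $p = 1$. The vanishing mixed volumes force the corresponding conormal bundle restrictions to vanish through Corollary \ref{mixvol0toconorm0}; Proposition \ref{vanconormcross} then identifies total vanishing on a complete fan with the cross-polytope fan of a product of copies of $\mathbb{P}^1$, while Proposition \ref{conormrestrsusp} upgrades each partial vanishing into a suspension of the link around the center ray of the conormal bundle. Iterating this suspension over every vanishing direction produces the repeated suspension of $|\lk_\Sigma(\rho)|$ over the maximal linear subspace of its realization. The converse statement, that the centers of those links $\lk_\Sigma(\alpha)$ passing through a fixed line assemble into a single cone or a repeated suspension of one, is then read off from Proposition \ref{sign0specialcov}.

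Part 2 follows the same template after replacing the ambient $N_{\mathbb{R}}$ by the lift $N(\omega)_{\mathbb{R}}$ attached to a subcone $\omega \subset \sigma_{i_1, \ldots, i_p}$ spanned by a subcollection of the generators $\rho_{i_1}, \ldots, \rho_{i_p}$; the affine-equivalence bookkeeping of Remark \ref{extaffequiv} guarantees that the $p = 1$ suspension mechanism transports to this relative setting, so Proposition \ref{sign0specialcov} and Corollary \ref{suspspecnon} apply in the same way. For Part 3 I would first note that whenever one of the rays is special, that is, lies in the maximal linear subspace of its link's realization, the relevant mixed volume degenerates and the monomial vanishes automatically, imposing no constraint; this is the triviality clause and it reduces the analysis to pairwise non-special rays. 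Granting non-speciality, the dichotomy between a repeated suspension and a cone (or repeated suspension of one) around a linear subspace is established in the base case $p = 2$ by Corollary \ref{p2conorm} and propagated to all $1 \le p \le \frac{d}{2}$ by Corollary \ref{pconelikep2}.

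The genuinely new input, and the step I expect to be the main obstacle, is Part 4: passing from the vanishing of a mixed volume to linear-algebraic relations among the orthogonal complements $\omega_P, \omega_Q, \omega_R$ of the divisor polytopes. One cannot read off a single universal identity; instead one must track which \emph{exponent patterns} force which submodularity inequalities on the dimensions of Minkowski sums to become equalities, and then translate each equality case into a containment or dependence of orthogonal complements. The plan is to treat the extreme regimes first---$p = 2, 3$ yielding the additive relation $\omega_Q \subset \omega_P + \omega_R$ from equality in submodularity (Proposition \ref{p2mvodd}, Example \ref{submodineqeqgeom}, Proposition \ref{p3algsubmodex}) and $p = \frac{d}{2}$ yielding the direct containment $\omega_P \subset \omega_R$ (Proposition \ref{pdover2})---and then interpolate between them. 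The hard part is that the intermediate values of $p$ mix both behaviors together with delicate parity conditions on the dimensions, so a clean statement only emerges after carefully bookkeeping the odd-exponent constraints; Corollary \ref{oddtupzerovoltot} is where this synthesis is carried out, and once the reduction from monomials to mixed volumes and the exponent viewpoint are in place, the whole of Part 4 reduces to invoking it.
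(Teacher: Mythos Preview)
Your proposal is essentially correct and mirrors the paper's own structure: Theorem~\ref{sig0fanstr} is a summary theorem with no standalone proof in the paper, each part simply pointing to the component results (Corollary~\ref{suspspecnon}, Proposition~\ref{sign0specialcov}, Corollary~\ref{p2conorm}, Corollary~\ref{pconelikep2}, Proposition~\ref{p2mvodd}, Proposition~\ref{pdover2}, Corollary~\ref{oddtupzerovoltot}, etc.), and your assembly follows that same route.

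One small imprecision worth tightening: in Part~1 you write that ``the vanishing mixed volumes force the corresponding conormal bundle restrictions to vanish through Corollary~\ref{mixvol0toconorm0}.'' That corollary does not assert that $-D_{i_j}|_{D_{i_1}\cap\cdots\cap D_{i_p}}$ itself vanishes; rather, $D_\rho^d=0$ only says the divisor polytope $P_{-D_\rho}^{D_\rho}$ is not full-dimensional, so its orthogonal complement is nontrivial and \emph{special rays} $\gamma$ exist, for which the \emph{further} restriction $-D_\rho|_{D_\gamma\cap D_\rho}$ vanishes. It is this secondary vanishing that feeds into Proposition~\ref{conormrestrsusp} and then Corollary~\ref{suspspecnon}. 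Relatedly, Proposition~\ref{vanconormcross} (all conormal bundles vanish $\Rightarrow$ cross polytope) is motivational in the paper rather than load-bearing for Part~1; the actual suspension structure comes from Corollary~\ref{suspspecnon} and Proposition~\ref{sign0specialcov}, which you do cite. With that clarification your outline matches the paper.
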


	Finally, studying ``flat'' wall crossings induced by conormal bundles that are nef but not ample (the $p = 1$ case $D_\rho^d = D_\rho \cdot (-D_\rho)^{d - 1} = 0$ of the vanishing identities implied by $\sigma(X_\Sigma) = 0$) gives an easy way to generate induced 4-cycles (Lemma \ref{zeroconorm4cycle}). For example, they can be used to show that every ray of $\Sigma$ is contained in an induced 4-cycle if $\Sigma$ is a complete simplicial locally convex fan and has signature $\sigma(X_\Sigma) = 0$ (Corollary \ref{sig04cycle}). \\
	
	We note that they involve an application of rational equivalence relations intersected with 2-dimensional torus orbit closures instead of 1-dimensional ones as is usual in combinatorial applications (e.g. see \cite{ACEP}). In addition, the induced 4-cycles generated using this more algebraic method yield potential algebraic explanations for the ``other side'' of the suspensions studied combinatorially in earlier sections (Corollary \ref{suspalg}, Remark \ref{suspspecnon}, Corollary \ref{4cycleconstralg}). This involves a common off-wall ray of the wall crossing inducing a ``flat'' wall crossing with respect to the ``center'' ray. The objects involved are connected to the contraction theorem in the minimal model program for toric varieties (Theorem 14-1-9 on p. 422 of \cite{Mat}).  \\

	\section{Wall crossings and fan structures} \label{wallfan}

	We first review background on signature 0 toric varieties in Remark \ref{sign0background}. The main point is that having the signature equal to 0 is equivalent to vanishing of a collection of mixed volumes involving conormal bundle restrictions (which are nef/globally generated by local convexity condition). To study properties of their divisor polytopes, we use wall crossings (Corollary \ref{mixvol0toconorm0}). This is the main tool for narrowing down structural properties of the fan along with maximal linear subspaces contained in realizations of links in lifts to different spaces parametrized by ``centers'' of conormal bundle restrictions added (Remark \ref{extaffequiv}). It yields repeated suspension structures in local parts as well as how they are glued together. \\
	
	To apply the wall crossing results on vanishing conormal bundles, we start by showing at cross polytope structure implied by vanishing of all conormal bundles (Proposition \ref{vanconormcross}). The combinatorial properties are then generalized to obtain repeated suspension structures from vanishing of conormal bundle restrictions (Proposition \ref{conormrestrsusp}). Resulting ``covers'' by suspension structures are given in Corollary \ref{suspspecnon} and Proposition \ref{sign0specialcov}. A key point in the generalizations to the $p$-cones mentioned is that all maximal cones in links over $p$-cones contain a special ray of a conormal bundle restriction (Proposition \ref{specialcovpconext}). \\

	As mentioned above, we start with some background on known properties of toric varieties associated to simplicial locally convex fans whose signatures are 0. It includes some comments on our general approach to the problem. \\

	\begin{rem} \textbf{(Signature 0 background and setting up ``special ray covers'') \\} \label{sign0background}
		
		Let $P$ be a simple polytope with a locally convex normal fan (p. 255 and p. 259 of \cite{LR}). Since we are studying toric varieties $X_\Sigma$ of even (complex) dimension $d$ such that the signature $\sigma(X_\Sigma) = 0$, all monomials of degree $d$ in the ray divisors supported on at most $\frac{d}{2}$ ray divisors that have even exponents on each ray divisor must be equal to 0 (Lemma 3.1 on p. 262 and the proof of Theorem 1.2(i) on p. 264 of \cite{LR}). This follows from the expression of $\sigma(X_\Sigma)$ as a \emph{strictly} positive linear combination of nonnegative intersection numbers (Lemma 3.1 on p. 262 of \cite{LR} and Euler's formula on p. 286 and Problem B-4 on p. 287 of \cite{MS}). \\
		
		Locally, this corresponds to vanishing of top degree (degree $d - p$) products of conormal bundle restrictions on $D_{i_1} \cap \cdots \cap D_{i_p}$ for $1 \le p \le \frac{d}{2}$ with an odd exponent on each conormal bundle restriction. The local convexity of the fan $\Sigma$ implies that these divisors are nef. Thus, we will mainly focus on mixed volumes associated to odd tuples of exponents. In particular, we have that $D_\rho^d = D_\rho \cdot (-D_\rho)^{d - 1} = 0$ for all rays $\rho \in \Sigma(1)$. Converting this into information on the divisor polytope $P_{-D_\rho}^{D_\rho}$, this is equivalent to the statement that $\dim P_{-D_\rho}^{D_\rho} \le d - 2$ (i.e. that it is \emph{not} full-dimensional). Note that the ambient space of the polytope of a divisor on $D_\rho$ is $\rho^\perp \cap M_{\mathbb{R}}$ (6.1.1 on p. 262 and Lemma 3.2.4 on p. 118 of \cite{CLS}). \\
		
		Since $-D_\rho|_{D_\rho}$ is generated by global sections/nef by local convexity (see Lemma 3.2 on p. 264 of \cite{LR}, equivalent when the fan has full-dimensional convex support by Theorem 6.3.12 on p. 291 of \cite{CLS}), we can also think about polar duals to look at properties of the support function of $-D_\rho|_{D_\rho}$. In particular, the divisor $-D_\rho|_{D_\rho}$ being nef implies that the divisor polytope $P_{-D_\rho}^{D_\rho}$ is the convex hull of points $m_\sigma(-D_\rho|_{D_\rho})$ from the Cartier data of $-D_\rho|_{D_\rho}$ (Theorem 6.1.7 on p. 266 -- 267 of \cite{CLS}) and the ``slopes'' of the divisor polytope are of the form $(m_\sigma(-D_\rho|_{D_\rho}), -1)$ (p. 265 of \cite{CLS}). The polar dual connection is made more concrete in Proposition 1.2.8 on p. 26 and Proposition 1.2.10 on p. 26 -- 27 of \cite{CLS}. \\

		From this polar dual perspective, the maximal linear subspace in the realization of $\lk_\Sigma(\rho)$ in $N_{\mathbb{R}}$ (see Proposition A.1 on p. 282 -- 283 of \cite{LR}) corresponds to the orthogonal complement of $P_{-D_\rho}^{D_\rho}$. These are the ``special rays'' of $\lk_\Sigma(\rho)$ from the $p = 1$ case of Corollary \ref{mixvol0toconorm0} (see Definition \ref{specialcovpconext}). The condition that $\dim P_{-D_\rho}^{D_\rho} \le d - 2$ (i.e. that it is \emph{not} full-dimensional) is equivalent to every maximal cone of $\lk_\Sigma(\rho)$ containing a special ray (Definition \ref{specnonspecdef}). In particular, this implies that every ray $\rho \in \lk_\Sigma(\rho)$ is adjacent to a ray $\gamma$ that is a special ray of $\lk_\Sigma(\rho)$. Note that this carries over to sums of conormal bundle restrictions with the orthogonal complement of the divisor polytope of $\sum_{j \in A} (-D_{i_j}|_{D_{i_1} \cap \cdots \cap D_{i_p}})$ for $A \subset [p]$ corresponding to the maximal linear subspace in the realization of $\lk_\Sigma(\rho_{i_1}, \ldots, \rho_{i_p})$ lifted from $N(\rho_{i_1}, \ldots, \rho_{i_p})_{\mathbb{R}}$ to $N(\rho_{i_j} : j \in [p] \setminus A)_{\mathbb{R}}$. \\

	\end{rem}

	Since we repeatedly use the affine equivalence and polar duals mentioned above for conormal bundle restrictions mentioned in Remark \ref{sign0background}, we will explain this in more detail below after defining some notation. \\
	
	\begin{defn} \textbf{(Cone notation) }
		\begin{enumerate}
			\item Given a collection of rays $\rho_{i_1}, \ldots, \rho_{i_p}$, we will use $\sigma_{i_1, \ldots, i_p}$ to denote the cone spanned by them. \\
			
			\item Given a fan $\Sigma \subset N_{\mathbb{R}}$ and a cone $\omega \in \Sigma$, the vector space space $N(\omega)_{\mathbb{R}}$ denotes the one constructed from the quotient of $N$ by $N \cap \omega$ (see Lemma 3.2.4 on p. 118 of \cite{CLS}).  \\
		\end{enumerate}
	\end{defn}

	\begin{rem} \textbf{(More on conormal bundle restrictions and lifts of realizations of links) \\} \label{extaffequiv}
		
		When we have vanishing of conormal bundle restrictions given by \[ -D_{i_j}|_{D_{i_1} \cap \cdots \cap D_{i_p}} = 0, \] this means that $|\lk_\Sigma(\sigma_{i_1, \ldots, i_p})| \subset N(\sigma_{i_1, \ldots, \widehat{i_j}, \ldots, i_p})_{\mathbb{R}}$ is a codimension 1 vector subspace after being lifted from $N(\sigma_{i_1, \ldots, i_p})_{\mathbb{R}}$. Similarly, having \[ \sum_{j \in A} (-D_{i_j}|_{D_{i_1} \cap \cdots \cap D_{i_p}}) = 0 \] means that the lift of $\lk_\Sigma(\sigma_{i_1, \ldots, i_p})$ from $N(\sigma_{i_1, \ldots, i_p})_{\mathbb{R}}$ to $N(\rho_{i_j} : j \in [p] \setminus A)_{\mathbb{R}}$ contains a vector subspace of dimension at least 1. While this is referred to implicitly in Remark \ref{sign0background}, we will go into more detail since it is used repeatedly. The most direct approach would be to replace $\Sigma$ and $N_{\mathbb{R}}$ on in Proposition A.1 on p. 282 -- 283 of \cite{LR} by $\lk_\Sigma(\sigma_{i_1, \ldots, \widehat{i_j}, \ldots, i_p})$ and $N(\sigma_{i_1, \ldots, \widehat{i_j}, \ldots, i_p})_{\mathbb{R}}$ respectively. For sums of conormal bundles, we note that $P_{D + E} = P_D + P_E$ for nef divisors $D$ and $E$ (see p. 69 of \cite{Ful} for nef divisor equality and general inclusion on p. 192 of \cite{CLS}) and that orthogonal complements of sums are intersections of the orthogonal complements of the individual parts. Then, we can use the polar duals discussed in Remark \ref{sign0background}. To compare the conditions $-D_{j_1}|_{D_{j_1} \cap D_{j_2}} = 0$ and $-D_{j_1}|_{D_{j_1}} = 0$, we can think about the picture on p. 9 of \cite{Ful} and an ``actual'' plane in $N_{\mathbb{R}}$. \\

		Alternatively, we can generalize the proof of Proposition A.1 on p. 282 -- 283 of \cite{LR} on Weil divisor expansions and affine equivalence. Using the argument on p. 283 of \cite{LR}, we see that there is a scalar multiple we need to take into account while writing down restrictions of divisors on $X_\Sigma$ to $X_{\St_\Sigma(\sigma_{i_1, \ldots, i_p})}$, where $\sigma_{i_1, \ldots, i_p} \coloneq \Cone(\rho_{i_1}, \ldots, \rho_{i_p})$. For each $\alpha \in \Sigma(1)$, we can decompose the ray generator $u_\alpha$ of $\alpha$ (Definition \ref{raygen}) as \[ u_\alpha = \sum_{j = 1}^p b_j u_{i_j} + c_{\alpha, i_1, \ldots, i_p} u_\alpha^{i_1, \ldots, i_p} \] for some $b_j \in \mathbb{Z}$ and $c_{\alpha, i_1, \ldots, i_p} \in \mathbb{Z}_{\ge 0}$ via a direct sum decomposition $N \cong \bigoplus_{j = 1}^p \mathbb{Z} \rho_{i_j} \oplus N'_{i_1, \ldots, i_p}$ for some $N'_{i_1, \ldots, i_p} \cong  N/\bigoplus_{j = 1}^p \mathbb{Z} \rho_{i_j}$. This comes from an extension $B$ of $u_{i_1}, \ldots, u_{i_p}$ to a basis of $N$ using $d - p$ additional elements (p. 107 of \cite{Ful}, Lemma 5.3 on p. 288 in Ch. VII of \cite{Ew}, p. 300 and Proposition 11.1.8 on p. 519 of \cite{CLS}). Note that the dual of this extended basis is only rational instead of integral since our fans are only assumed to be simplicial. \\

		Substituting in the relation above, we have the following:

		\begin{lem} \label{conormrestr}
			Given $m \in M_{\mathbb{R}}$, we have that \[ \hspace{-12mm} -\sum_{j = 1}^p \langle m, u_{i_j} \rangle D_{i_1} \cdots D_{i_{j - 1}} D_{i_j}^2 D_{i_{j + 1}} \cdots D_{i_p} = \sum_{\alpha \in \lk_\Sigma(\Cone(\rho_{i_1}, \ldots, \rho_{i_p}))(1)} \langle m, u_\alpha \rangle D_\alpha  D_{i_1} \cdots D_{i_p} \] in $A^\cdot(X_\Sigma)$. \\
			
			On the restriction to $D_{i_1} \cap \cdots \cap D_{i_p}$, we have 
			
			\[ \sum_{j = 1}^p \langle m, u_{i_j} \rangle (- D_{i_j}) = \sum_{\alpha \in \lk_\Sigma(\Cone(\rho_{i_1}, \ldots, \rho_{i_p}))(1)} \frac{\langle m, u_\alpha \rangle}{c_{\alpha, i_1, \ldots, i_p}} D_\alpha. \]

		\end{lem}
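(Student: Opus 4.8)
The plan is to obtain both identities by multiplying the standard rational equivalence relation among ray divisors by $D_{i_1} \cdots D_{i_p}$ and then restricting to the orbit closure $V(\sigma_{i_1, \ldots, i_p}) = D_{i_1} \cap \cdots \cap D_{i_p}$, keeping careful track of the scalar $c_{\alpha, i_1, \ldots, i_p}$ produced by the quotient lattice. First I would start from the relation $\sum_{\rho \in \Sigma(1)} \langle m, u_\rho \rangle D_\rho = 0$ in $A^1(X_\Sigma)$, valid for every $m \in M$ and extended $\mathbb{R}$-linearly to $m \in M_{\mathbb{R}}$, coming from $\Divi(\chi^m) \sim 0$. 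Multiplying through by $D_{i_1} \cdots D_{i_p}$ and splitting the sum over $\rho$ into three cases gives the first displayed identity: the terms $\rho = i_j$ produce the self-intersection monomials $D_{i_1} \cdots D_{i_j}^2 \cdots D_{i_p}$ weighted by $\langle m, u_{i_j} \rangle$; the terms with $\rho = \alpha \in \lk_\Sigma(\Cone(\rho_{i_1}, \ldots, \rho_{i_p}))(1)$ produce the right-hand sum; and every remaining ray $\rho$ contributes $0$, since $\{\rho, \rho_{i_1}, \ldots, \rho_{i_p}\}$ fails to span a cone of $\Sigma$, so that $D_\rho D_{i_1} \cdots D_{i_p} = 0$ by the standard vanishing of products of ray divisors not lying in a common cone (\cite{CLS}). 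Moving the self-intersection terms to the left then yields the first equation.

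For the second identity I would restrict the first one to $V(\sigma_{i_1, \ldots, i_p})$, the toric variety of the star fan in $N(\sigma_{i_1, \ldots, i_p})_{\mathbb{R}}$, for which two inputs are needed. First, by adjunction for a toric divisor, $\mathcal{O}_{X_\Sigma}(D_{i_j})$ restricts along $D_{i_j}$ to its normal bundle, so an extra factor $D_{i_j}$ realizes $D_{i_j}|_{V(\sigma_{i_1, \ldots, i_p})}$; combined with the leading minus sign of the first identity, the left-hand self-intersection terms restrict exactly to $\sum_{j} \langle m, u_{i_j} \rangle (-D_{i_j})$, i.e. the globally generated conormal bundle restrictions. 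Second, using the decomposition $u_\alpha = \sum_{j} b_j u_{i_j} + c_{\alpha, i_1, \ldots, i_p} u_\alpha^{i_1, \ldots, i_p}$, the image of $u_\alpha$ in $N(\sigma_{i_1, \ldots, i_p})$ is $c_{\alpha, i_1, \ldots, i_p}$ times the primitive generator $u_\alpha^{i_1, \ldots, i_p}$ of the corresponding star-fan ray $\bar\alpha$, because $\Sigma$ is only simplicial; consequently $D_\alpha|_{V(\sigma_{i_1, \ldots, i_p})} = \tfrac{1}{c_{\alpha, i_1, \ldots, i_p}} D_{\bar\alpha}$, equivalently the multiplicity ratio $\mult(\sigma_{i_1, \ldots, i_p})/\mult(\sigma_{i_1, \ldots, i_p} + \alpha)$ equals $\tfrac{1}{c_{\alpha, i_1, \ldots, i_p}}$. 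Dividing out the common factor $D_{i_1} \cdots D_{i_p}$ (the class of $V(\sigma_{i_1, \ldots, i_p})$) then gives the second identity.

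As a consistency check I would test $m \perp \sigma_{i_1, \ldots, i_p}$: there the left-hand side vanishes and the factors $c_{\alpha, i_1, \ldots, i_p}$ cancel against $\langle m, u_\alpha \rangle = c_{\alpha, i_1, \ldots, i_p} \langle m, u_\alpha^{i_1, \ldots, i_p}\rangle$, recovering exactly the rational-equivalence relation $\sum_{\bar\alpha} \langle m, u_\alpha^{i_1, \ldots, i_p}\rangle D_{\bar\alpha} = 0$ on $V(\sigma_{i_1, \ldots, i_p})$; this pins down that the correct scalar is $\tfrac{1}{c_{\alpha, i_1, \ldots, i_p}}$ rather than $c_{\alpha, i_1, \ldots, i_p}$. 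The main obstacle I expect is precisely this bookkeeping of $c_{\alpha, i_1, \ldots, i_p}$: because the fan is simplicial rather than smooth, the dual of the extended basis is only rational and the image of $u_\alpha$ in the quotient lattice need not be primitive, so one must argue carefully — via the index computation for $\sigma_{i_1, \ldots, i_p} \subset \sigma_{i_1, \ldots, i_p} + \alpha$ — that restriction of $D_\alpha$ acquires exactly the factor $\tfrac{1}{c_{\alpha, i_1, \ldots, i_p}}$. The sign conventions tying $-D_{i_j}|_{V(\sigma_{i_1, \ldots, i_p})}$ to the conormal bundle restriction are routine once adjunction is invoked, matching the argument on p.~283 of \cite{LR}.
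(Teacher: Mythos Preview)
Your proposal is correct and follows essentially the same route as the paper: start from the rational equivalence relation $\sum_{\rho} \langle m, u_\rho\rangle D_\rho = 0$, multiply by $D_{i_1}\cdots D_{i_p}$, kill the terms whose rays do not span a cone with $\sigma_{i_1,\ldots,i_p}$, and then pass to the restriction using the decomposition $u_\alpha = \sum_j b_j u_{i_j} + c_{\alpha,i_1,\ldots,i_p}\, u_\alpha^{i_1,\ldots,i_p}$ to pick up the factor $1/c_{\alpha,i_1,\ldots,i_p}$. Your account of the multiplicity bookkeeping and the consistency check for $m \perp \sigma_{i_1,\ldots,i_p}$ are in fact more explicit than the paper's proof, which simply cites the direct sum decomposition for the division step.
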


		\begin{proof}
			From the rational equivalence relation, we get 
			
			\begin{align*}
				- \langle m, u_{i_1} \rangle D_{i_1} &= \sum_{\alpha \in \Sigma(1) \setminus \rho_{i_1}} \langle m, u_\alpha \rangle D_\alpha 
			\end{align*}
			
			for each $m \in M_{\mathbb{R}}$ after moving one variable to one side. \\
			
			After multiplying by $D_{i_1} \cdots D_{i_p}$ and separating the squarefree terms on the right hand side from the others, this implies that 
			
			\begin{align*}
				- \langle m, u_{i_1} \rangle D_{i_1}^2 \cdots D_{i_p} &= \sum_{\alpha \in \lk_\Sigma(\Cone(\rho_{i_1}, \ldots, \rho_{i_p}))(1)} \langle m, u_\alpha \rangle D_\alpha D_{i_1} \cdots D_{i_p} \\ 
				&+ \sum_{j = 2}^p \langle m, u_{i_j} \rangle D_{i_1} \cdots D_{i_{j - 1}} D_{i_j}^2 D_{i_{j + 1}} \cdots D_{i_p}.
			\end{align*}
			
			The division by terms $c_{\alpha, i_1, \ldots, i_p}$ comes from the relation \[ u_\alpha = \sum_{j = 1}^p b_j u_{i_j} + c_{\alpha, i_1, \ldots, i_p} u_\alpha^{i_1, \ldots, i_p} \] for some $b_j \in \mathbb{Z}$ and $c_{\alpha, i_1, \ldots, i_p} \in \mathbb{Z}_{\ge 0}$ coming from the direct sum decomposition $N = \bigoplus_{j = 1}^p \mathbb{Z} u_{i_j} \oplus N'$ for some $N' \cong N/\bigoplus_{j = 1}^p \mathbb{Z} u_{i_j}$. \\
			
		\end{proof}
		
		Then, we can substitute a dual element of a basis for the conormal bundle restrictions we are adding as in p. 282 -- 283 of \cite{LR} and p. 300 of \cite{CLS}. \\
		
	\end{rem}
	
	\color{black}

	Our main approach towards obtaining structural information on locally convex fans $\Sigma$ such that $\sigma(X_\Sigma)$ is to use wall crossings to analyze orthogonal complements to divisor polytopes of conormal bundle restrictions. This will then be further refined by a study of general conditions on polytopes where odd tuples fail to produce any top degree terms with nonzero mixed volumes. The key idea in this second part is to consider the exponents as indicator functions for the mixed volume and study how far submodularity inequalities are from equality. \\
	
	We start studying the structure of such fans with an alternate characterization of rays of the ambient fan of a toric variety that are perpendicular to the divisor polytope $P_D$ of a basepoint free divisor $D$. Note that every basepoint free divisor is nef and that the converse holds if the fan has convex support of full dimension and we are considering a Cartier divisor (Proposition 6.3.11 and Theorem 6.3.12 on p. 291 of \cite{CLS}). We can also compare the statement below with an alternate characterization of the generalized fan $\Sigma_{P_D}$ formed by cones of the normal ``fan'' of the divisor polytope $P_D$ (Proposition 6.2.5 on p. 279 of \cite{CLS}). \\

	\begin{defn} (\textbf{Ray generator in a lattice}, p. 29 of \cite{CL}) \\
		\label{raygen}
		Give a ray $\rho \in \Sigma(1)$ and $\Sigma \subset \N_{\mathbb{R}}$, we define $u_\rho$ is the unique generator of $\rho \cap N$. This will be used in proofs of the following results.
	\end{defn}

	\color{black}
	
	\begin{prop} \label{altpdperp}
		Suppose that $\Sigma$ is a simplicial fan such that $|\Sigma|$ is convex of full dimension. Let $D$ is basepoint free Cartier divisor of $X_\Sigma$ (nef by former assumption). Then, the rays $\alpha \in \Sigma(1)$ that are perpendicular to all of $P_D$ are exactly those in \[ \bigcap_{ \substack{ \tau \in \Sigma(d - 1) \text{ a wall } \\ D \cdot V_\Sigma(\tau) \ne 0 }  } \Span(\tau). \]
	\end{prop}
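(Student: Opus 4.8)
The plan is to reduce perpendicularity to $P_D$ to a condition on the edge vectors of $P_D$, and then to recognize those edge vectors as spanning the lines $\Span(\tau)^\perp$ for exactly the relevant walls. Throughout, for a wall $\tau \in \Sigma(d-1)$ I write $\sigma,\sigma' \in \Sigma(d)$ for the two maximal cones containing it and $m_\sigma, m_{\sigma'} \in M_{\mathbb{R}}$ for the Cartier data of $D$ on these cones. Since $D$ is basepoint free, Theorem 6.1.7 of \cite{CLS} gives $P_D = \Conv\{m_\sigma : \sigma \in \Sigma(d)\}$, and the edge of $P_D$ across $\tau$ has direction $e_\tau := m_\sigma - m_{\sigma'}$. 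I will interpret ``$\alpha$ perpendicular to all of $P_D$'' as $u_\alpha$ being orthogonal to the direction space $W$ of the affine hull of $P_D$, i.e. $u_\alpha \in \omega_{P_D}$ in the notation of Theorem \ref{sig0fanstr}.

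First I would record two elementary facts about $e_\tau$. Fact (i): $e_\tau \perp \Span(\tau)$, because for each ray $\rho \in \tau(1)$ the values $\langle m_\sigma, u_\rho\rangle$ and $\langle m_{\sigma'}, u_\rho\rangle$ both equal the value of the support function of $D$ on $u_\rho$, so $\langle e_\tau, u_\rho\rangle = 0$, and these $u_\rho$ span $\Span(\tau)$. Fact (ii): $e_\tau \neq 0 \iff D \cdot V_\Sigma(\tau) \neq 0$, since the support function of $D$ is linear across $\tau$ precisely when $m_\sigma = m_{\sigma'}$, which is exactly the vanishing of the intersection number (the wall computation in Chapter 6 of \cite{CLS}). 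Because $\tau$ has codimension $1$, the space $\Span(\tau)^\perp$ is a line; combining (i) and (ii), whenever $D \cdot V_\Sigma(\tau) \neq 0$ the nonzero vector $e_\tau$ spans this line, so $\Span(\tau) = e_\tau^\perp$. In particular, for such $\tau$ we get the crucial equivalence $u_\alpha \in \Span(\tau) \iff \langle e_\tau, u_\alpha\rangle = 0$.

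Next I would identify $W = \Span\{e_\tau : D \cdot V_\Sigma(\tau) \neq 0\}$. One inclusion is immediate since each $e_\tau = m_\sigma - m_{\sigma'} \in W$. For the reverse, $W$ is spanned by all vertex differences $m_\sigma - m_{\sigma'}$; since $|\Sigma|$ is convex of full dimension, the adjacency graph of its maximal cones is connected, so any $m_\sigma - m_{\sigma'}$ telescopes as a sum of one-step differences $\pm e_{\tau_i}$ along a path of walls, and the steps with $e_{\tau_i} = 0$ drop out. Hence every vertex difference lies in $\Span\{e_\tau : D \cdot V_\Sigma(\tau) \neq 0\}$, giving the claim. (When $P_D$ is a point this index set is empty and $W = 0$, matching the convention that the empty intersection is all of $N_{\mathbb{R}}$, so that every ray is then perpendicular.)

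Finally I would assemble the chain of equivalences: $\alpha$ is perpendicular to $P_D$ iff $u_\alpha \perp W$, iff $\langle e_\tau, u_\alpha\rangle = 0$ for every wall $\tau$ with $D \cdot V_\Sigma(\tau) \neq 0$ (by the spanning identity for $W$), iff $u_\alpha \in \Span(\tau)$ for every such wall (by $\Span(\tau) = e_\tau^\perp$ from Fact (ii)), iff $u_\alpha \in \bigcap_{\tau \in \Sigma(d-1),\, D\cdot V_\Sigma(\tau)\neq 0} \Span(\tau)$, which is the desired description. I expect the only genuinely delicate point to be Fact (ii) together with the codimension-$1$ argument yielding $\Span(\tau) = e_\tau^\perp$: this is precisely what upgrades the weak consequence ``$u_\alpha$ is orthogonal to the single edge direction $e_\tau$'' into the strong membership ``$u_\alpha \in \Span(\tau)$'', and it is exactly here that basepoint-freeness (nefness) of $D$ enters, through the vertex description of $P_D$ and the intersection formula.
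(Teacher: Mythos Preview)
Your proof is correct and follows essentially the same approach as the paper: both reduce perpendicularity to $P_D$ to the vanishing of $\langle m_\sigma - m_{\sigma'}, u_\alpha\rangle$ across all walls (using connectivity of the maximal cones via wall crossings), and both identify this vanishing with $u_\alpha \in \Span(\tau)$ precisely when $D \cdot V_\Sigma(\tau) \neq 0$. The only cosmetic difference is that you obtain $\Span(\tau) = e_\tau^\perp$ by a codimension count, whereas the paper expands $u_\alpha$ in the $\sigma'(1)$-basis and observes that only the off-wall coefficient survives; these are two phrasings of the same linear-algebra fact.
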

	
	\begin{rem} ~\\
		\vspace{-3mm}
		\begin{enumerate}
			
			\item Since $\Sigma$ is simplicial (which we will assume throughout), every Weil divisor on $X_\Sigma$ has a positive multiple that is a Cartier divisor (Proposition 4.2.7 on p. 180 of \cite{CLS}). So, we will use this condition on a positive multiple without explicitly stating that we are working with a Cartier divisor. \\
			
			\item In the setting of locally convex fans that we will mainly work in, we will substitute links $\lk_\Sigma(\omega)$ in place of $\Sigma$ as a simplicial complex structure on the cones (or stars $\St_\Sigma(\omega)$ if working in the quotient lattice $N(\omega)$ in place of $N$) itself. Also, we have that $\Span(\tau) = \tau - \tau$ and the span itself can be considered to be a cone in a refinement of the original fan including $-\rho$ for each ray $\rho$ (see p. 28 and Theorem 6.1.18 on p. 275 of \cite{CLS}). \\
			
		\end{enumerate}
	\end{rem}
	
	\begin{proof}
		
		The rays perpendicular to the entire divisor polytope are exactly those in every (maximal) normal cone to $P_D$. Note that its normal cones form a generalized fan (Definition 6.2.2 on p. 278 of \cite{CLS}). \\ 
		
		Since $D$ is basepoint free, the vertices of $P_D$ are given by the Cartier divisor data $m_\sigma(D)$ for maximal cones $\sigma \in \Sigma(d)$ (Theorem 6.1.7 on p. 266 -- 267 of \cite{CLS}). The rays we would like to look into are those where $\langle m_\sigma(D) - m_{\sigma'}(D), \alpha \rangle = 0$ for all pairs of distinct maximal cones $\sigma$ and $\sigma'$ (also see p. 272 of \cite{CLS}). Since the maximal cones can be traversed via a series of wall crossings, it suffices to consider maximal cones $\sigma, \sigma' \in \Sigma(d)$ such that $\sigma \cap \sigma'$ is a wall. \\

		The connection between $\langle m_\sigma(D) - m_{\sigma'}(D), \alpha \rangle$ and $D \cdot V_\Sigma(\tau)$ comes from the fact that $D \cdot V_\Sigma(\tau) = \langle m_\sigma(D) - m_{\sigma'}(D), u_{\sigma'/\tau} \rangle$ (Proposition 6.3.8 on p. 289 of \cite{CLS}). Since $\Sigma$ is simplicial, the rays of any maximal cone of $\Sigma$ form a basis for $N_{\mathbb{R}}$. In particular, we can write $\alpha$ as a (unique) linear combination of the rays $\rho \in \sigma'(1)$. We can rescale the basis elements to take them to be the minimal lattice elements $u_\rho$ in place of $\rho$. Recall that $\langle m_\sigma(D), u_\rho \rangle = -a_\rho(D)$ for any ray $\rho \in \sigma(1)$ (p. 262 of \cite{CLS}). In particular, this implies that $u_\rho$-components of $\alpha$ with respect to the rescaled $\sigma'(1)$ basis that lie in $\tau(1) = \sigma(1) \cap \sigma'(1)$ do not contribute anything to the difference $\langle m_\sigma(D) - m_{\sigma'}(D), \alpha \rangle = \langle m_\sigma(D), \alpha \rangle - \langle m_{\sigma'}(D), \alpha \rangle$. This implies that 
		
		\[ \hspace{-5mm} \langle m_\sigma(D) - m_{\sigma'}(D), \alpha \rangle = c_{\sigma'/\tau, \sigma'(1)}(\alpha) \langle m_\sigma(D) - m_{\sigma'}(D), u_{\sigma'/\tau} \rangle = c_{\sigma'/\tau, \sigma'(1)}(\alpha) (D \cdot V_\Sigma(\tau)),  \]
		
		where $c_{\sigma'/\tau, \sigma'(1)}(\alpha)$ denotes the coefficient of $u_{\sigma'/\tau}$ in the rescaled $\sigma'(1)$-basis expansion of $\alpha$. \\

		Note that $D \cdot V_\Sigma(\tau) = 0$ if and only if $m_\sigma(D) = m_{\sigma'}(D)$ (Proposition 6.3.15 on p. 292 of \cite{CLS}). Assuming that $m_\sigma(D) \ne m_{\sigma'}(D)$, this product is zero if and only if $\alpha \in \Span(\tau)$. Repeating this procedure for each pair of maximal cones of $\Sigma$ sharing a wall yields the intersection in the statement.

	\end{proof}

	We can use Proposition \ref{altpdperp} to understand the structure of rays relevant to the conormal bundle restrictions in the mixed volume 0 condition from Theorem \ref{zerovolcrit}. They can be phrased in terms of conormal bundles that restrict to 0. \\
	
	\begin{lem} \label{convlink}
		If $\Sigma \subset N_{\mathbb{R}}$ is locally convex, then the realization $|\St_\Sigma(\omega)| \subset N_{\mathbb{R}}$ has convex support for any cone $\omega \in \Sigma$. \\
	\end{lem}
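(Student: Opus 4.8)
The plan is to prove convexity of $|\St_\Sigma(\omega)|$ by the local-to-global principle (the Tietze--Nakajima theorem: a closed, connected, locally convex subset of $N_{\mathbb{R}}$ is convex), reducing the local convexity at each point to the defining wall condition and, in higher codimension, to the same statement for a link in a lower-dimensional space. First I would record the easy structural facts: $|\St_\Sigma(\omega)|$ is a finite union of the maximal cones of $\Sigma$ that contain $\omega$, hence closed and conical, and it is connected because every such cone has $\omega$ as a common face. It then remains only to check that the set is locally convex at each of its points.

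For the local convexity check I would stratify by the cone of $\Sigma$ whose relative interior contains the point $x$. If $x$ lies in the relative interior of a positive-dimensional cone $\delta$, then near $x$ the support $|\St_\Sigma(\omega)|$ is the product of $\Span(\delta)$ with the support of a star in the link $\lk_\Sigma(\delta)$, which lives in the quotient space $N(\delta)_{\mathbb{R}}$ of strictly smaller dimension. Because the wall condition defining local convexity is inherited by links (the two maximal cones of $\lk_\Sigma(\delta)$ meeting along a wall are the images of two maximal cones of $\Sigma$ meeting along a wall containing $\delta$), an induction on $\dim N_{\mathbb{R}}$ shows that this link star has convex support, so $|\St_\Sigma(\omega)|$ is locally convex at $x$. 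The codimension-one instance of this reduction is exactly the hypothesis that $\Sigma$ is locally convex.

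The main obstacle is the cone point $0$, i.e. the stratum $\delta=\{0\}$: here the germ of $|\St_\Sigma(\omega)|$ is the whole set, so local convexity at $0$ is equivalent to the global conclusion and the induction cannot close by itself. I would resolve this using the genuinely convex-geometric input behind local convexity, namely that such a $\Sigma$ refines the normal fan of a convex piecewise-linear support function; then $|\St_\Sigma(\omega)|$ is precisely the locus where this support function agrees with its linearization along the face dual to $\omega$, equivalently the polar dual of the tangent cone of the associated polyhedron along that face, and a polar dual of a convex cone is convex. In the complete case relevant to the paper this is even cleaner: for $\omega=\{0\}$ one has $|\St_\Sigma(\omega)|=|\Sigma|=N_{\mathbb{R}}$, and for $\omega\neq\{0\}$ every link $\lk_\Sigma(\delta)$ with $\delta\neq\{0\}$ is again complete, so the inductive step applies verbatim at all non-apex points while the apex is handled by the polar-dual description above. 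I expect the bookkeeping of the stratification to be routine; the only step requiring genuine convexity, as opposed to formal fan combinatorics, is this behavior at the apex.
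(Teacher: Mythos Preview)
Your proposal takes a very different route from the paper and, as written, has a real gap at the apex.

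The paper's proof is three lines: by Leung--Reiner the hypothesis ``locally convex'' literally \emph{means} that $|\St_\Sigma(\rho)|$ is convex for every ray $\rho$; simplicial and locally convex together force the cone complex to be flag (Proposition~5.3 of \cite{LR}); and flagness gives $|\St_\Sigma(\omega)|=\bigcap_{\rho\in\omega(1)}|\St_\Sigma(\rho)|$, a finite intersection of convex sets. You never invoke this definition directly, which is why you end up working much harder.

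Your Tietze--Nakajima plus induction scheme is sound away from the apex, but the step you flag as ``genuinely convex-geometric input'' does not follow from the hypotheses. You assert that a locally convex $\Sigma$ refines the normal fan of a convex piecewise-linear function and then identify $|\St_\Sigma(\omega)|$ with the polar dual of a tangent cone of the associated polytope. That identification needs a \emph{global} strictly convex support function, i.e.\ projectivity, and it also needs $\omega$ to survive as a cone of the coarser normal fan so that it is dual to an actual face. Neither is part of the lemma's hypotheses: local convexity (nefness of each conormal bundle $-D_\rho|_{D_\rho}$) does not by itself produce an ample divisor, and even when one exists $\omega$ may lie strictly inside a maximal cone of the coarsening. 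So the apex case remains circular in your argument, whereas the paper's intersection-of-ray-stars argument handles it for free. If you want to salvage your approach without projectivity, the natural patch at $0$ is precisely to observe $|\St_\Sigma(\omega)|\subset\bigcap_{\rho\in\omega(1)}|\St_\Sigma(\rho)|$ and then argue equality via flagness --- but at that point you have reproduced the paper's proof.
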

	
	\begin{proof}
		
		Since $\Sigma$ is simplicial and locally convex, its cones form a flag simplicial complex (Proposition 5.3 on p. 279 of \cite{LR}). This means that $\St_\Sigma(\omega) = \bigcap_{ \rho \in \omega(1) } \St_\Sigma(\rho)$. Each of the fans intersected has convex support since $\Sigma$ is locally convex. Since a finite intersection of convex sets is convex, the fan $\St_\Sigma(\omega)$ must also have convex support. \\
	\end{proof}
	
	We now introduce some notation that will be used frequently. \\

	\begin{defn} \textbf{(Special/non-special rays of a link with respect to a given set) \\} \label{specnonspecdef}
		A ray of $\lk_\Sigma(\sigma_{i_1, \ldots, i_p})$ is called \textbf{special with respect to $A \subset [p]$} if it generates the orthogonal complement of the divisor polytope $P_{\sum_{j \in A} (-D_{i_j})}^{D_{i_1} \cap \cdots \cap D_{i_p}}$ in its ``usual'' ambient space. Otherwise, a ray of $\lk_\Sigma(\sigma_{i_1, \ldots, i_p})$ is called \textbf{non-special with respect to $A$}. If $A = \{ j \}$, we may simply say that a ray of $\lk_\Sigma(\sigma_{i_1, \ldots, i_p})$ is special/non-special with respect to $\rho_{i_j}$. \\
		
		Dually, we can consider the special rays of $\lk_\Sigma(\sigma_{i_1, \ldots, i_p})$ to be generators of the maximal linear subspace in the realization of $\lk_\Sigma(\sigma_{i_1, \ldots, i_p})$ in its lift from $N(\sigma_{i_1, \ldots, i_p})_{\mathbb{R}}$ to $N(\rho_{i_j} : j \in [p] \setminus A)_{\mathbb{R}}$ (see Remark \ref{sign0background} and Remark \ref{extaffequiv}). Note that $N(\omega)_{\mathbb{R}}$ implicitly makes a choice of a codimension $\dim \omega$ vector subspace of $N_{\mathbb{R}}$ whose intersection with the vector space generated by the rays of $\omega$ is 0. \\
	\end{defn}

	Using wall crossings to analyze the classification in Definition \ref{specnonspecdef} is our main approach to narrowing down the general structure of the fan $\Sigma$. \\

	\begin{cor} \label{mixvol0toconorm0}
		Suppose that $\Sigma$ is a (complete) simplicial locally convex fan. Let $D = \sum_{j \in A} (-D_{i_j})$. For conormal bundle restrictions on links/stars over cones of $\Sigma$, the intersection from Proposition \ref{altpdperp} yields generators of the intersection given by ``special'' rays of $\lk_\Sigma(\sigma_{i_1, \ldots, i_p})$ with the following properties. The rest of the rays of $\lk_\Sigma(\sigma_{i_1, \ldots, i_p})$ will be termed ``non-special''. \\
		
		\begin{itemize}
			\item $D_{i_j} \cdot V_{\lk_\Sigma(\sigma_{i_1, \ldots, i_p})}(\sigma \setminus \gamma) = 0$ for all $j \in A$ and every maximal cone $\sigma$ of the link containing $\gamma$. Equivalently, we have that $D_{i_j}|_{D_\gamma \cap D_{i_1} \cap \cdots \cap D_{i_p}} = 0$ for all $j \in A$. \\
			
			\item $D_\delta \cdot V_{\lk_\Sigma(\sigma_{i_1, \ldots, i_p})}(\sigma \setminus \gamma) = 0$ for every maximal cone $\sigma$ of the link containing $\delta$ and $\gamma$. Equivalently, we have that $D_\delta|_{D_\delta \cap D_\gamma \cap D_{i_1} \cap \cdots \cap D_{i_p}} = 0$ for all non-special rays $\delta$ and special rays $\gamma$ of $\lk_\Sigma(\sigma_{i_1, \ldots, i_p})$. \\
		\end{itemize}
		
		The special rays generate the linear subspace of vectors perpendicular to all of $P_D$. Each maximal cone has the same number of special rays. 
		
	\end{cor}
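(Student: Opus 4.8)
The plan is to run Proposition \ref{altpdperp} inside the link rather than the ambient fan, and then to read off the two bullet identities from nefness together with simpliciality. First I would verify the hypotheses of Proposition \ref{altpdperp} for $L := \lk_\Sigma(\sigma_{i_1,\ldots,i_p})$: by Lemma \ref{convlink} the star $\St_\Sigma(\sigma_{i_1,\ldots,i_p})$ has convex support, and since $\Sigma$ is complete this support is full-dimensional and the link is itself complete, so every wall has two adjacent maximal cones. The divisor $D = \sum_{j\in A}(-D_{i_j})$ restricts to a globally generated, hence basepoint-free, divisor on the link variety by local convexity (Remark \ref{sign0background}); replacing it by a positive Cartier multiple, which does not change the perpendicular rays, lets me apply Proposition \ref{altpdperp}. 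Its conclusion identifies the rays of $L$ perpendicular to all of $P_D$ with $\bigcap_{\tau:\,D\cdot V_L(\tau)\neq 0}\Span(\tau)$; by Definition \ref{specnonspecdef} these are exactly the special rays, which simultaneously proves that the special rays generate $P_D^{\perp}$ and justifies the complementary labelling of the remaining rays as non-special. Using $P_D = \sum_{j\in A}P_{-D_{i_j}}$ (Minkowski additivity for nef divisors, Remark \ref{extaffequiv}) together with the fact that the orthogonal complement of a Minkowski sum is the intersection of the complements, a ray is special with respect to $A$ iff it is special with respect to each $j\in A$ individually.

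For the first bullet, fix a special ray $\gamma$ and a maximal cone $\sigma\ni\gamma$ of $L$. Since $\sigma$ is simplicial its rays are linearly independent, so $\gamma\notin\Span(\sigma\setminus\gamma)$. As $\gamma$ lies in $\Span(\tau)$ for every wall $\tau$ with $D\cdot V_L(\tau)\neq 0$, the wall $\tau=\sigma\setminus\gamma$ must satisfy $D\cdot V_L(\sigma\setminus\gamma)=0$. Now $D\cdot V_L(\sigma\setminus\gamma)=\sum_{j\in A}(-D_{i_j})\cdot V_L(\sigma\setminus\gamma)$, and each summand is $\ge 0$ because the conormal bundle restrictions are nef by local convexity; a sum of nonnegative numbers vanishing forces each to vanish, giving $D_{i_j}\cdot V_L(\sigma\setminus\gamma)=0$ for all $j\in A$. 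The ``equivalently'' clause is the standard toric reformulation: a nef restriction has degree zero on these invariant curves precisely when it becomes trivial in the direction recorded by $\gamma$, i.e. $D_{i_j}|_{D_\gamma\cap D_{i_1}\cap\cdots\cap D_{i_p}}=0$.

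The second bullet I would obtain by passing to the link of the non-special ray $\delta$. If $\delta$ and $\gamma$ both lie in a maximal cone $\sigma$, then the curve $C = V_L(\sigma\setminus\gamma)$ is contained in $D_\delta$ (as $\delta\in\sigma\setminus\gamma$), and under the identification $D_\delta\cap D_{i_1}\cap\cdots\cap D_{i_p}=X_{\lk_L(\delta)}$ it becomes the curve opposite the image $\bar\gamma$ of $\gamma$ in $\lk_L(\delta)$. The intersection number $D_\delta\cdot C$ equals $-\,\big((-D_\delta)|_{D_\delta}\big)\cdot C$, where $(-D_\delta)|_{D_\delta}$ is nef by local convexity, so the same simpliciality-plus-nefness argument as above yields the vanishing once one knows that $\bar\gamma$ is perpendicular to $P_{-D_\delta}^{D_\delta\cap\cdots}$, i.e. is a special ray for the \emph{ray} conormal bundle in $\lk_L(\delta)$. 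Establishing that the flat direction $\gamma$ of the center polytope $P_D$ is also a flat direction for $P_{-D_\delta}$ is the step I expect to be the main obstacle; I would attack it either by the wall-crossing identity $\langle m_\sigma(-D_\delta)-m_{\sigma'}(-D_\delta),\gamma\rangle=0$, showing the $\gamma$-crossing is parallel, or by feeding the first-bullet vanishing through the rational-equivalence relation of Lemma \ref{conormrestr}, which expresses the center conormal degrees in terms of the link-ray divisors $D_\alpha$ meeting $C$.

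Finally, for the uniform count I would show that for every maximal cone $\sigma$ its special rays span all of $V^\perp := P_D^\perp$, so that their number equals $k=\dim V^\perp=(d-p)-\dim P_D$. The engine is again a wall-crossing computation: for $v\in V^\perp$ written in the $\sigma$-basis $r_1,\ldots,r_{d-p}$ as $v=\sum_i c_i r_i$, the difference $m_\sigma(D)-m_{\sigma'_j}(D)$ across the facet $\sigma\setminus r_j$ is orthogonal to every $r_i$ with $i\neq j$, while $v\in V^\perp$ forces $\langle m_\sigma(D)-m_{\sigma'_j}(D),v\rangle=0$; hence $c_j\neq 0$ can occur only when $D\cdot V_L(\sigma\setminus r_j)=0$. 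Combined with the first-bullet vanishing and the completeness of the link, which makes the cones $\sigma\cap V^\perp$ tile the $k$-dimensional subspace $V^\perp$, this should pin down $\dim(\sigma\cap V^\perp)=k$ with $\sigma\cap V^\perp$ a face spanned by special rays. The delicate point here, and the second place I expect genuine work, is excluding non-special rays $r_j$ of $\sigma$ whose opposite facet still satisfies $D\cdot V_L(\sigma\setminus r_j)=0$ from contributing to $V^\perp$; this is where I would use that the special directions occur in the $\pm$ pattern forced by completeness, so that every maximal cone resolves the flat subspace $V^\perp$ with exactly $k$ of its rays.
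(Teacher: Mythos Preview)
Your setup and first-bullet argument are correct and coincide with the paper's Steps~1--2. The gap is the second bullet, which you correctly flag as the obstacle but do not close. Your first proposed attack is circular: if $\sigma,\sigma'$ are the two maximal cones across $\tau=\sigma\setminus\gamma$, then $\langle m_\sigma(-D_\delta)-m_{\sigma'}(-D_\delta),\gamma\rangle$ is a nonzero multiple of $D_\delta\cdot V_L(\tau)$ by the very computation in Proposition~\ref{altpdperp}, so asserting it vanishes is the claim itself. Your second attack (rational equivalence/Lemma~\ref{conormrestr}) is the right idea but you have not carried it out.

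The paper stays with $D$ rather than passing to $-D_\delta$. Its Step~3 takes the special ray $\gamma$ itself as the test vector $v\in V^\perp$ and expands it in the $\sigma'$-basis, where $\sigma'$ is the cone across $\tau=\sigma\setminus\gamma$. Subtracting the two expansions of $u_\gamma$ gives a linear relation among $\{u_\gamma,u_{\gamma'}\}\cup\{u_\omega:\omega\in\tau(1)\}$; since these $d-p+1$ vectors span a $(d-p)$-dimensional space, the relation is unique up to scalar and hence proportional to the wall relation $\sum_\rho (D_\rho\cdot V_L(\tau))\,u_\rho=0$. Thus the $\sigma'$-coefficient of $u_\gamma$ on $u_\delta$ is a nonzero multiple of $D_\delta\cdot V_L(\tau)$. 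Now feed this into the computation you already wrote down for the uniform count, applied to $\sigma'$: since $u_\gamma\in V^\perp$, its $\sigma'$-coefficients vanish on every ray $r$ with $D\cdot V_L(\sigma'\setminus r)\neq 0$. Iterating this over the wall-crossing traversal of all maximal cones is exactly how the paper obtains the second bullet, and the same traversal resolves your ``delicate point'': a crossing with a special off-wall ray sends special to special (by the wall relation with non-special coefficients zero), while a crossing with a non-special off-wall ray keeps all special rays on the wall, so the span of special rays is invariant and equals $V^\perp$ in every maximal cone. In short, the wall-relation comparison you placed in the last paragraph is the engine for the second bullet, not just for the count; moving it there and running the traversal is what is missing.
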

	
	\begin{proof}
		
		Part 3 of Theorem \ref{zerovolcrit} implies that the mixed volumes involving conormal bundle restrictions are determined by dimensions of polytopes associated to divisors that are sums of conormal bundle restrictions given by restrictions of $-D_{i_j}$ for $j \in A \subset [p]$ to $D_{i_1} \cap \cdots \cap D_{i_p}$. Let $D = \sum_{j \in A} (-D_{i_j})$. \\

		Since the maximal cones can be traversed via wall crossings, we study the intersection from Proposition \ref{altpdperp} in 3 stages: \\

		\begin{enumerate}\bfseries
			\item Within a fixed maximal cone \\
			
			\item Wall crossings where the proposed span intersection generator/special ray is \emph{on} the wall \\
			
			\item  Wall crossings where the proposed span intersection generator/special ray is \emph{off} the wall \\
		\end{enumerate}
		
		We will now start studying the span intersection using these steps. \\

		\textbf{Step 1: Walls inside a fixed maximal cone \\}

		Suppose that we are working with a fixed maximal cone $\sigma$ of $\lk_\Sigma(\sigma_{i_1, \ldots, i_p})$. Since $\Sigma$ is simplicial, the linear independence of the rays implies that the intersection of spans of subsets of the rays in a fixed cone is given by the span of the intersection of the subsets. For example, consider 4 linearly independent vectors $v_1, v_2, v_3$, and $v_4$. If $a_1 v_1 + a_2 v_2 = b_2 v_2 + b_3 v_3 + b_4 v_4$, this is equivalent to $a_1 v_1 + (a_2 - b_2) v_2 - b_3 v_3 - b_4 v_4 = 0$. Since $v_1, v_2, v_3$, and $v_4$ are linearly independent, this occurs if and only if $a_1 = 0, a_2 - b_2 = 0, b_3 = 0$, and $b_4 = 0$. Note that the second condition is equivalent to $a_2 = b_2$. The same reasoning holds for comparisons linear combinations of arbitary pairs of subcollections of a set of linearly independent vectors. This means that

		\[ \mathlarger{\bigcap}_{ \substack{ \tau \subset \sigma \\ \tau \in \lk_\Sigma(\sigma_{i_1, \ldots, i_p})(d - p - 1) \text{ a wall } \\ D \cdot V_\Sigma(\tau) \ne 0 }  } \Span(\tau) = \Span \left( \mathlarger{\bigcap}_{ \substack{ \tau \subset \sigma \\ \tau \in \lk_\Sigma(\sigma_{i_1, \ldots, i_p})(d - p - 1) \text{ a wall } \\ D \cdot V_\Sigma(\tau) \ne 0 }  } \tau(1)  \right). \]

		Rescaling using multiplicities (e.g. see p. 108 of \cite{Ful}), this is equivalent to 
		
		\[ \mathlarger{\bigcap}_{ \substack{ \tau \subset \sigma \\ \tau \supset \sigma_{i_1, \ldots, i_p} \\ \tau \in \Sigma(d - 1) \text{ a wall } \\ D \cdot V_\Sigma(\tau) \ne 0 }  } \Span(\tau) = \Span \left( \mathlarger{\bigcap}_{ \substack{ \tau \subset \sigma \\ \tau \supset \sigma_{i_1, \ldots, i_p} \\ \tau \in \Sigma(d - 1) \text{ a wall } \\ D \cdot V_\Sigma(\tau) \ne 0 }  } \tau(1)  \right) \]
		
		since $\Sigma$ is simplicial. \\
		
		% Extracting rays via dual bases - see p. 420 - 421 of Matsuki...
		% Mention consistency with general property...
		Thus, the question reduces to understanding when $D \cdot V_\Sigma(\tau) \ne 0 $. Since $-D_{i_j}$ is nef on $D_{i_1} \cap \cdots D_{i_p}$ for every $j \in [p]$ by local convexity, we have that $-D_{i_j} \cdot V_\Sigma(\tau) \ge 0$ for each wall $\tau$ containing $\rho_{i_j}$. This means that $D \cdot V_\Sigma(\tau) = 0$ if and only if $D_{i_j} \cdot V_\Sigma(\tau) = 0$ for all $j \in A$. More generally, we are taking the orthogonal complement of $P_{D + E} = P_D + P_E$ for nef divisors $D$ and $E$ (see p. 69 of \cite{Ful}). In other words, we have that $D \cdot V_\Sigma(\tau) \ne 0$ if and only if $D_{i_j} \cdot V_\Sigma(\tau) \ne 0$ for some $j \in A$. This can be split into 2 cases: \\
		\vspace{-3mm}
		\begin{itemize}
			\item If $\rho_{i_j} \notin \tau$, then this means that $(\rho_{i_j}, \tau) \in \Sigma$. However, this case is not relevant since we are assuming that $\tau \supset \sigma_{i_1, \ldots, i_p} = (\rho_{i_1}, \ldots, \rho_{i_p})$. \\

			\item If $\rho_{i_j} \in \tau$, we can write $D_{i_j} \cdot V_\Sigma(\tau) \ne 0 \Longleftrightarrow -\langle u_{\rho_{i_j}}^*, u_\gamma \rangle  \ne 0$, where the dual is taken with respect to the $\sigma'(1)$-basis of the ambient vector space ($\sigma'$ being the other maximal cone of $\Sigma$ containing $\tau$) and $\gamma \coloneq \sigma / \tau$. Note that the dual depends on the choice of basis and that $V_\Sigma(\rho_{i_j}) \cdot V_\Sigma(\sigma \setminus \gamma) = - \langle u_{i_j}^*, u_\gamma \rangle V_\Sigma(\gamma) \cdot V_\Sigma(\tau)$, where the dual is taken with respect to $\sigma'(1)$ and multiplicities yield rescaling to ray divisors (p. 421 of \cite{Mat} and p. 302 of \cite{CLS}). This point of view is expanded from the perspective of convexity/flatness of wall crossings in Remark \ref{picloconv} (see p. 420 -- 421 of \cite{Mat}). \\
		\end{itemize}
		% Used \biggl and \biggr in place of \left(   \right) when using split environment as instructed on https://tex.stackexchange.com/questions/114256/missing-final-brackets-in-one-equation-and-in-other-in-the-equations
		% Fixed equation layout situation using \usepackage{breqn} as suggested on Tex.SE 3782 in https://tex.stackexchange.com/questions/3782/how-can-i-split-an-equation-over-two-or-more-lines
		Since the intersection is obtained as the \emph{complement} of rays $\gamma$ from the second case, the intersection for suitable walls in a fixed maximal cone $\sigma$ can be rewritten as
		\begin{align*}
			\begin{split}
				\mathlarger{\bigcap}_{ \substack{ \tau \subset \sigma \\ \tau \supset \sigma_{i_1, \ldots, i_p} \\ \tau \in \Sigma(d - 1) \text{ a wall } \\ D \cdot V_\Sigma(\tau) \ne 0 }  } \Span(\tau) &= \Span \left( \mathlarger{\bigcap}_{ \substack{ \tau \subset \sigma \\ \tau \supset \sigma_{i_1, \ldots, i_p} \\ \tau \in \Sigma(d - 1) \text{ a wall } \\ D \cdot V_\Sigma(\tau) \ne 0 }  } \tau(1)  \right) \\
				&= \Span \left(  \mathlarger{\bigcap}_{\substack{ \gamma \in \sigma(1) \\ \gamma \notin \{ \rho_{i_1}, \ldots, \rho_{i_p} \} \\ D_{i_j} \cdot V_\Sigma(\sigma \setminus \gamma) \ne 0 \text{ for some } j \in A } } \sigma(1) \setminus \gamma \right) \\
				&= \Span \biggl \{ \rho_{i_1}, \ldots, \rho_{i_p} \} \cup  \{ \gamma \in \sigma(1) : \gamma \notin \{ \rho_{i_1}, \ldots, \rho_{i_p} \} \\ 
				&\quad \text{ and } D_{i_j} \cdot V_\Sigma(\sigma \setminus \gamma) = 0 \text{ for all } j \in A \} \biggr. \\
			\end{split}
		\end{align*}

		As mentioned above, we note that the dual is taken with respect to the rays of the maximal cone $\sigma'$ ``opposite'' to $\sigma$ that shares a wall with it (which we write as $\tau$). \\
		
		\textbf{Step 2: Wall crossings with a generator lying on the wall \\}
		
		Now suppose that $\gamma \in \sigma'(1)$ for another maximal cone $\sigma' \supset \sigma_{i_1, \ldots, i_p}$ of $\Sigma$. For example, this includes maximal cones $\sigma' \ni \gamma$ such that $\sigma \cap \sigma'$ is a wall. Note that such wall crossings traverse all the maximal cones $\sigma' \supset \sigma_{i_1, \ldots, i_p}$ of $\Sigma$ that contain $\gamma$. Suppose that $\gamma \notin \{ \rho_{i_1}, \ldots, \rho_{i_p} \}$. In order to have

		% Tried to use \biggl and \biggr in place of \left(   \right) when using split environment as instructed on https://tex.stackexchange.com/questions/114256/missing-final-brackets-in-one-equation-and-in-other-in-the-equations again, but didn't work. So, just used regular parentheses since ones that showed up weren't that big anyway...
		\begin{align*}
			\begin{split}
				\gamma &\in \mathlarger{\bigcap}_{ \substack{ \tau \subset \sigma' \\ \tau \supset \sigma_{i_1, \ldots, i_p} \\ \tau \in \Sigma(d - 1) \text{ a wall } \\ D \cdot V_\Sigma(\tau) \ne 0 }  } \Span(\tau) \\ 
				&= \Span \left(  \mathlarger{\bigcap}_{\substack{ \gamma' \in \sigma'(1) \\ \gamma' \notin \{ \rho_{i_1}, \ldots, \rho_{i_p} \} \\ D_{i_j} \cdot V_\Sigma(\sigma' \setminus \gamma') \ne 0 \text{ for some } j \in A } } \sigma'(1) \setminus \gamma' \right) \\ 
				&= \Span (\{ \rho_{i_1}, \ldots, \rho_{i_p} \} \cup \{ \gamma' \in \sigma'(1) : \gamma' \notin \{ \rho_{i_1}, \ldots, \rho_{i_p} \} \\
				&\quad \text{ and } D_{i_j} \cdot V_\Sigma(\sigma' \setminus \gamma') = 0 \text{ for all } j \in A \} ) \\
			\end{split}
		\end{align*}
		
		for \emph{any} such maximal cone $\sigma'$ (noting that the duals depend on the choice of basis and may change depending on which one is across a wall from the maximal cone we're on), we need to have $D_{i_j} \cdot V_\Sigma(\tau) = 0$ for all $j \in A$ and \emph{any} wall $\tau \supset \sigma_{i_1, \ldots, i_p}$ such that $\tau \ni \gamma$. After rescaling, this condition is equivalent to having $D_{i_j} \cdot V_{\lk_\Sigma(\sigma_{i_1, \ldots, i_p}, \gamma)}(\tau \setminus (\sigma_{i_1, \ldots, i_p}, \gamma)) = 0$ for all $j \in A$ for all walls $\tau \supset (\sigma_{i_1, \ldots, i_p}, \gamma)$. This covers all walls of $\lk_\Sigma(\sigma_{i_1, \ldots, i_p}, \gamma)$. The convexity of the support of the full-dimensional fan $\lk_\Sigma(\sigma_{i_1, \ldots, i_p}, \gamma)$ (Lemma \ref{convlink}) then implies that $D_{i_j}|_{D_\gamma \cap D_{i_1} \cap \cdots \cap D_{i_p}} = 0$ for all $j \in A$ (Proposition 6.3.15 on p. 292 of \cite{CLS}). \\
		
		\textbf{Step 3: Wall crossings with a generator off the wall \\}

		Finally, we consider the case of wall crossings where $\gamma$ is an off-wall ray. In Step 1 and Step 2, we imposed conditions that ensure that a ray $\gamma$ lies in the intersection of spans of walls $\tau$ such that $D \cdot V_\Sigma(\tau)$ for a wall $\tau$ of \emph{any} maximal cone that contains $\gamma$. In the course of a traversal of the maximal cones via wall crossings, the only way for the dimension of the intersection to decrease is for a ray $\gamma$ to be used as an off-wall ray of a wall crossing and be sent to a ray $\gamma'$ that no longer lies in the intersection of the spans of all the relevant walls. The rays that remain are those rays $\gamma$ that are still passed on to rays lying in the overall intersection after such a wall crossing. In particular, we cannot have a further decrease in dimension (which would occur since rays forming a cone of a simplicial fan $\Sigma$ are linearly independent). \\ 
		
		If we start with a collection of rays in the current maximal cone that lie in \[ \bigcap_{ \substack{ \tau \in \Sigma(d - 1) \text{ a wall } \\ D \cdot V_\Sigma(\tau) \ne 0 }  } \Span(\tau), \]
		
		we need the span including both the subcollection of the intersection rays in the wall and $\gamma$ to be preserved. The main tool we will use will be the wall relation (p. 301 of \cite{CLS}). Note that ``opposite'' rays $\gamma'$ across $\gamma$ after crossing a wall (containing $\sigma_{i_1, \ldots, i_p}$ in our setting) still corresponds to the intersection of $D$ and the same torus-invariant curve $V_\Sigma(\tau)$ since $\sigma \setminus \gamma = \sigma' \setminus \gamma'$. \\

		Let $\tau = \sigma \setminus \gamma$ and write $\tau = \sigma \cap \sigma'$, where $\sigma'$ is the other maximal cone of $\Sigma$ containing $\tau$. In order for the span of $\gamma$ and a subcollection of the rays in $\tau(1)$ to be carried over to $\sigma'$ in a nontrivial way, there is a nonzero vector $\alpha$ in the span intersection with linear ($\sigma(1)$ and $\sigma'(1)$ basis) expansions
		
		\begin{align*}
			\alpha &= a_\gamma u_\gamma + \sum_{\omega \in \tau(1)} a_\omega u_\omega \\
			&= b_{\gamma'} u_{\gamma'} + \sum_{\omega \in \tau(1)} b_\omega u_\omega 
		\end{align*}
		
		where $a_\gamma, b_{\gamma'} \ne 0$ and $a_\omega \ne 0 \Longleftrightarrow b_\omega \ne 0$. Here, the rays $\omega \in \tau(1)$ where $a_\omega = b_\omega = 0$ are those outside the span intersection that we are considering (e.g. from combining Step 1 and Step 2). \\
		
		The equality between the relations can be rewritten as 
		
		\[ a_\gamma u_\gamma + \sum_{\omega \in \tau(1)} (a_\omega - b_\omega) u_\omega - b_{\gamma'} u_{\gamma'} = 0. \]
		
		Comparing this to the wall relation \[ \sum_\rho (D_\rho \cdot V(\tau)) u_\rho = 0, \] 
		
		the relations above imply that
		
		\[ \frac{a_\omega - b_\omega}{a_\gamma} = \frac{D_\omega \cdot V_\Sigma(\tau)}{D_\gamma \cdot V_\Sigma(\tau)} \Longrightarrow b_\omega = a_\omega - \frac{D_\omega \cdot V_\Sigma(\tau)}{D_\gamma \cdot V_\Sigma(\tau)} a_\gamma. \]
		
		If $a_\omega = b_\omega = 0$, then \[ D_\omega \cdot V_\Sigma(\tau) = 0 \Longrightarrow D_\omega \cdot V_{\lk_\Sigma(\sigma_{i_1, \ldots, i_p}, \gamma, \omega)}(\tau \setminus (\sigma_{i_1, \ldots, i_p}, \gamma, \omega)) = 0 \] since $a_\gamma \ne 0$. Since this relation must hold for \emph{any} wall $\tau$ of $\Sigma$ containing $(\sigma_{i_1, \ldots, i_p}, \gamma, \omega)$, the convexity of $|\lk_\Sigma(\sigma_{i_1, \ldots, i_p}, \gamma, \omega)|$ implies that \[ D_\omega|_{D_\omega \cap D_\gamma \cap D_{i_1} \cap \cdots \cap D_{i_p}} = 0 \] by Proposition 6.3.15 on p. 292 of \cite{CLS}. \\
		
		In order for a ray $\gamma$ to remain in the intersection of spans \[ \bigcap_{ \substack{ \tau \in \Sigma(d - 1) \text{ a wall } \\ D \cdot V_\Sigma(\tau) \ne 0 }  } \Span(\tau) \] after \emph{all} wall crossings, its ``opposite ray'' $\gamma'$ under any wall crossing with $\gamma$ as an off-wall ray must have $D_{i_j}|_{D_\gamma \cap D_{i_1} \cap \cdots \cap D_{i_p}} = 0$ for all $j \in A$ and $D_\omega|_{D_\omega \cap D_\gamma \cap D_{i_1} \cap \cdots \cap D_{i_p}} = 0$ for any ray $\omega$ of a maximal cone containing $\gamma$ that doesn't lie in the span intersection at that point. The image of $\gamma'$ itself under such wall crossings must have the same properties. This cycles after a finite number of steps if the intersection is nonempty. The required properties are written in the statement.

	\end{proof}

	We can also think about special/non-special rays as describing links that are flat in the realization spaces that we lift to. \\

	\begin{cor} \textbf{(Wall crossings maintaining spans and flat links) \\} 
		\label{spanflatlink}
		\begin{enumerate}
			\item \textbf{(Maintaining spans of ray complements after wall crossings) \\} Let $\sigma$ be a maximal cone of $\Sigma$ containing a ray $\alpha$. Suppose that $D_\alpha \cdot V_\Sigma(\sigma \setminus \gamma) = 0$. Consider the wall crossing starting with $\sigma$ using $\gamma$ as an off-wall ray (i.e. ray to be replaced) that is sent to (i.e. replaced by) a ray $\gamma'$ forming a new maximal cone $\sigma = \Cone(\sigma/\gamma, \gamma')$. Then, we have that $\Span(\sigma \setminus \alpha) = \Span(\sigma' \setminus \alpha)$. \\
			
			\item \textbf{(Flat links in given realization spaces) \\} Consider a $p$-cone $\sigma_{i_1, \ldots, i_p}$ and a maximal cone $\sigma$ of $\Sigma$ containing $\sigma_{i_1, \ldots, i_p}$. Given a subset $A \subset [p]$, let $M$ be a maximal collection of non-special rays forming a cone of $\lk_\Sigma(\sigma_{i_1, \ldots, i_p})$ with respect to $A$ that is contained in $\sigma$. Then, we have that $|\lk_\Sigma(\sigma_{i_1, \ldots, i_p}, M)| \subset N(\rho_{i_j} : j \in [p] \setminus A)_{\mathbb{R}}$ is a vector subspace of $N(\rho_{i_j} : j \in [p] \setminus A)_{\mathbb{R}}$. This is actually a link of $\lk_\Sigma(\sigma_{i_1, \ldots, i_p})$ of maximal dimension among those whose supports lift to a vector subspace of $N(\rho_{i_j} : j \in [p] \setminus A)_{\mathbb{R}}$. \\
		\end{enumerate}
	\end{cor}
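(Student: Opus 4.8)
The plan is to prove Part 1 by a direct computation with the wall relation and then to bootstrap it, together with Corollary \ref{mixvol0toconorm0}, into the flatness statement of Part 2.

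For Part 1, I would write $\tau = \sigma \setminus \gamma = \sigma \cap \sigma'$ for the wall, so that $\sigma = \Cone(\tau, \gamma)$ and $\sigma' = \Cone(\tau, \gamma')$. Because $\gamma$ is the off-wall ray, both off-wall intersection numbers $D_\gamma \cdot V_\Sigma(\tau)$ and $D_{\gamma'} \cdot V_\Sigma(\tau)$ are nonzero, so the hypothesis $D_\alpha \cdot V_\Sigma(\tau) = 0$ forces $\alpha \neq \gamma$; hence $\alpha \in \tau(1)$ survives the crossing. I would then invoke the wall relation $\sum_\rho (D_\rho \cdot V_\Sigma(\tau))\, u_\rho = 0$ (p. 301 of \cite{CLS}), whose only nonzero terms come from $\gamma$, $\gamma'$, and the wall rays. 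The key observation is that the coefficient of $u_\alpha$ is exactly $D_\alpha \cdot V_\Sigma(\tau) = 0$, so $u_\alpha$ drops out of the relation entirely; what remains is a linear dependence among $u_\gamma$, $u_{\gamma'}$, and $\{u_\omega : \omega \in \tau(1)\setminus\alpha\}$ in which $u_\gamma$ and $u_{\gamma'}$ appear with nonzero coefficients. Solving this relation for $u_{\gamma'}$ exhibits $u_{\gamma'} \in \Span(\sigma\setminus\alpha)$, and solving for $u_\gamma$ gives $u_\gamma \in \Span(\sigma'\setminus\alpha)$; since $\sigma\setminus\alpha$ and $\sigma'\setminus\alpha$ share the rays $\tau(1)\setminus\alpha$, these two containments yield $\Span(\sigma\setminus\alpha) = \Span(\sigma'\setminus\alpha)$.

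For Part 2, I would first pin down $M$: since any subset of $\sigma(1)$ spans a face of $\sigma$, the maximality of $M$ means $M$ is precisely the set of all non-special rays (with respect to $A$) of the maximal cone $\bar\sigma = \sigma/\sigma_{i_1,\ldots,i_p}$ of $\lk_\Sigma(\sigma_{i_1,\ldots,i_p})$. Using that every maximal cone of the link carries the same number $s$ of special rays (Corollary \ref{mixvol0toconorm0}), a counting argument shows that every ray of $\lk_\Sigma(\sigma_{i_1,\ldots,i_p}, M)$ is special: any maximal cone of the link containing $M$ already exhausts its quota of $d-p-s$ non-special rays with $M$, so any further ray must be special. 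Consequently, in the lift $\bar N \coloneqq N(\rho_{i_j}:j\in[p]\setminus A)_{\mathbb{R}}$, each maximal cone of the realization of $\lk_\Sigma(\sigma_{i_1,\ldots,i_p}, M)$ is generated by lifted special rays, and hence lies inside the maximal linear subspace $W = (P_D)^\perp$ that the special rays generate (Definition \ref{specnonspecdef}, Remark \ref{sign0background}). To show the realization fills all of $W$, I observe that crossing a wall of $\lk_\Sigma(\sigma_{i_1,\ldots,i_p}, M)$ amounts, back in $\Sigma$, to crossing a wall $\tau = \sigma\setminus\gamma_k$ that removes a single special ray $\gamma_k$ and introduces $\gamma'$, which the equal-quota property forces to again be special. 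I would then run the Part 1 argument simultaneously on all the rays $\alpha$ to be suppressed — the non-special rays $\delta\in M$ and the conormal rays $\rho_{i_j}$ with $j\in A$ — each of which satisfies $D_\alpha\cdot V_\Sigma(\tau) = 0$ by the two bullet points of Corollary \ref{mixvol0toconorm0} (the rays $\rho_{i_j}$ with $j\notin A$ are already killed in the quotient defining $\bar N$). All of these drop out of the wall relation, leaving a relation purely among the lifted special rays, so the linear span of the special rays is preserved across every wall crossing and equals $W$ for every maximal cone.

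Since $\Sigma$ is complete and locally convex, $\lk_\Sigma(\Cone(\sigma_{i_1,\ldots,i_p}, M))$ is a complete fan in its own quotient space $Q$ (Lemma \ref{convlink} gives convexity, and completeness of $\Sigma$ gives completeness of the link), and the projection $\pi\colon \bar N \to Q$ restricts to a linear isomorphism $W \xrightarrow{\sim} Q$ because both are spanned by the $s$ images of the special rays of a single maximal cone. A subset $S\subseteq W$ with $\pi(S)=Q$ and $\pi|_W$ injective must equal $W$; applying this to $S = |\lk_\Sigma(\sigma_{i_1,\ldots,i_p}, M)|$ shows that the support is exactly the vector subspace $W$. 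For the maximality claim I would argue contrapositively: if $\lk_\Sigma(\sigma_{i_1,\ldots,i_p}, M')$ had vector-subspace support for some $M'\subseteq\sigma$, that support would be a linear subspace of the realization of $\lk_\Sigma(\sigma_{i_1,\ldots,i_p})$, hence contained in the maximal linear subspace $W$; every ray of this sublink would then lie in $W$ and so be special, forcing $M'$ to contain every non-special ray of $\bar\sigma$, i.e. $M'\supseteq M$ and $\dim \lk_\Sigma(\sigma_{i_1,\ldots,i_p}, M')\le \dim\lk_\Sigma(\sigma_{i_1,\ldots,i_p}, M)$. The main obstacle I anticipate is the bookkeeping of the lift to $\bar N$: one must check that the wall relations, intersection numbers, and span computations remain compatible when passing from $\Sigma$ through the link quotient and back up to the lift (the scalar-multiple and affine-equivalence subtleties of Remark \ref{extaffequiv} and Lemma \ref{conormrestr}), and in particular that $\pi|_W$ is a genuine isomorphism rather than merely a surjection — this is exactly where the equal-number-of-special-rays input does the essential work.
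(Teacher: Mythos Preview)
Your proposal is correct and follows essentially the same approach as the paper: for Part 1 both you and the paper drop $u_\alpha$ from the wall relation and read off the span equality, and for Part 2 both apply Part 1 together with Corollary~\ref{mixvol0toconorm0} to see that wall crossings with special off-wall rays preserve the span of the special rays, which is the maximal linear subspace $W$. Your treatment of Part 2 is in fact more explicit than the paper's (the projection argument that the support fills $W$, and the contrapositive for maximality), though you should note that the maximality claim in the statement is over \emph{all} links of $\lk_\Sigma(\sigma_{i_1,\ldots,i_p})$, not only those with center contained in $\sigma$; your argument extends to this case immediately by taking any maximal cone of the link containing $M'$ in place of $\bar\sigma$ and using the equal-quota count.
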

	
	\begin{proof}
		\begin{enumerate}
			
			\item By the wall relations, we have that \[ a_\gamma u_\gamma + \sum_{\omega \in \tau(1)} a_\omega u_\omega + a_{\gamma'} u_{\gamma'} = 0 \]
			
			for some $a_\gamma, a_{\gamma'} > 0$, where $\tau = \sigma \cap \sigma'$ is the wall used in the wall crossing. Recall that this is a positive multiple of the relation \[ \sum_{\rho} (D_\rho \cdot V(\tau)) u_\rho = 0. \]
			
			Since $D_\alpha \cdot V_\Sigma(\sigma \setminus \gamma) = 0$, we have that $a_\alpha = 0$ and \[ a_\gamma u_\gamma + \sum_{\omega \in \tau(1) \setminus \alpha} a_\omega u_\omega + a_{\gamma'} u_{\gamma'} = 0. \]
			
			This implies that \[ a_{\gamma'} u_{\gamma'} = - \left( a_\gamma u_\gamma + \sum_{\omega \in \tau(1) \setminus \alpha} a_\omega u_\omega \right). \]
			
			Since $a_{\gamma'} > 0$, this means that $\gamma' \in \Span(\gamma, \tau(1) \setminus \alpha)$. We note that $\Span(\tau(1) \setminus \alpha) \subsetneq \Span(\gamma', \tau(1) \setminus \alpha)$ since $\Sigma$ is simplicial. Comparing dimensions then implies that $\Span(\gamma', \tau(1) \setminus \alpha) = \Span(\gamma, \tau(1) \setminus \alpha)$. This can be rewritten as $\Span(\sigma' \setminus \alpha) = \Span(\sigma \setminus \alpha)$. \\

			\item This follows from repeated application of Part 1 and Corollary \ref{mixvol0toconorm0}. By ``special'' and ``non-special'', we will mean those with respect to the initial set $A \subset [p]$. Splitting the rays of the maximal cone $\sigma$ into special and non-special rays, a wall crossing with a special ray as an off-wall ray sends the special ray to a special ray of the new maximal cone that is formed and the non-special rays remain on the wall. Such wall crossings can be used to traverse the maximal cones of $\Sigma$ containing $\sigma_{i_1, \ldots, i_p}$ and a maximal \emph{fixed} collection of non-special rays contained in $\sigma$. \\
			
			\noindent Combining Corollary \ref{mixvol0toconorm0} with Part 1, we find that we are taking repeated intersections with $N(\rho_{i_j})_{\mathbb{R}}$ for $j \in A$ and $N(\delta)_{\mathbb{R}}$ for the non-special rays $\delta$ of the maximal collection of non-special rays forming a cone. Here, the vector space $N(\omega)_{\mathbb{R}}$ denotes a \emph{fixed} codimension $\dim \omega$ vector subspace of $N_{\mathbb{R}}$ whose intersection with $\Span(\omega)$ is 0. Alternatively, we note that the only rays left to consider in the maximal cones of the link $\lk_\Sigma(\sigma_{i_1, \ldots, i_p}, M)$ are the special rays of each maximal cone involved. These all span the same linear subspace, which is the maximal vector subspace contained in the realization $|\lk_\Sigma(\sigma_{i_1, \ldots, i_p})| \subset N(\rho_{i_j} : j \in [p] \setminus A)_{\mathbb{R}}$. We note that such a link must contain all the non-special rays in a given maximal cone in order for a linear subspace to be formed. 
		\end{enumerate}
	\end{proof}

	\begin{rem} \textbf{(Flat links in a given ambient space from different values of $p$) \\} \label{flatambvar}
		We note that there is some redundancy among the flat links discussed in Corollary \ref{spanflatlink}. More specifically, suppose that $|\lk_\Sigma(\sigma)| \subset N(\omega)_{\mathbb{R}}$ for some cone $\omega \subset \sigma$ generated by a subcollection of the rays of $\sigma$. This can come from ``flat links'' constructed out of $\dim \omega \le p \le \dim \sigma$ using $p$-cones $\sigma_{i_1, \ldots, i_p}$ with $\omega \subset \sigma_{i_1, \ldots, i_p} \subset \sigma$. The sets $A \subset [p]$ would be the subcollection of rays of $\sigma_{i_1, \ldots, i_p}$ \emph{not} coming from $\omega$. \\
	\end{rem}

	The consequences above are for local properties of the fan that are ``flat'' in a realization space based on information from Corollary \ref{mixvol0toconorm0}. However, we can also say more about the possible  fan structures resulting in the properties in Corollary \ref{mixvol0toconorm0}. The key point is to think about restrictions of conormal bundles to intersections of ray divisors being 0 as being analogous to cross polytopes. \\

	\begin{prop} \textbf{(Vanishing conormal bundles and cross polytopes) \\}
		\label{vanconormcross}
		Note that the $d$-fold self-product $\mathbb{P}^1 \times \cdots \times \mathbb{P}^1$ yields the cross polytope (Proposition 2.4.9 on p. 90 of \cite{CLS}) and that the conormal bundle $-D_{\rho}|_{D_\rho} = 0$ for all rays $\rho \in \Sigma(1)$. \\
		
		Conversely, we can show that a complete fan yielding a toric variety with $-D_{\rho}|_{D_\rho} = 0$ for all rays $\rho \in \Sigma(1)$ is the normal fan of a cross polytope. \\
		
	\end{prop}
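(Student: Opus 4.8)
The plan is to show that the vanishing of every self-conormal bundle forces the literal rays of $\Sigma$ to assemble into the arrangement of $d$ transverse hyperplanes whose chambers are the $2^d$ orthants — which is exactly the fan of $\mathbb{P}^1 \times \cdots \times \mathbb{P}^1$. First I would promote the algebraic hypothesis to a linear one. For a fixed ray $\rho$, the divisor $-D_\rho|_{D_\rho}$ is a torus-invariant divisor on the complete toric variety $X_{\lk_\Sigma(\rho)}$, and being the zero class it meets every invariant curve in degree $0$; equivalently $D_\rho \cdot V_\Sigma(\tau) = 0$ for every wall $\tau \ni \rho$. Feeding this into the wall relation $\sum_\alpha (D_\alpha \cdot V_\Sigma(\tau))\, u_\alpha = 0$ kills the coefficient of $u_\rho$, so across each such wall the two off-rays lie in $\Span(\tau(1) \setminus \{\rho\})$. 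Fixing one maximal cone $\sigma_0 = \Cone(u_\rho, u_{\gamma_1}, \ldots, u_{\gamma_{d-1}})$ and propagating this relation through the (codimension-one connected) link, I would conclude by induction on wall crossings that every ray adjacent to $\rho$ lies in the fixed hyperplane $H_\rho := \Span(u_{\gamma_1}, \ldots, u_{\gamma_{d-1}})$, with $u_\rho \notin H_\rho$. This is precisely the ``flat link''/all-rays-special picture of Corollary \ref{mixvol0toconorm0} and Remark \ref{extaffequiv} in the $p = 1$ case, which I would cite for the same conclusion.

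With a hyperplane $H_\rho$ attached to every ray, the combinatorial heart is an invariance statement. In any maximal cone $\sigma$, the facet opposite a ray $\rho \in \sigma(1)$ is spanned by the remaining rays, which are all adjacent to $\rho$, hence this facet spans exactly $H_\rho$; thus the $d$ facet hyperplanes of $\sigma$ are $H_{\rho_1}, \ldots, H_{\rho_d}$ and $\sigma = \bigcap_k H_{\rho_k}^{+}$ is a chamber of their (transverse) arrangement. I would then cross a wall $\tau = \sigma \setminus \rho_k$ to the neighbor $\sigma' = \Cone(\tau, \delta)$: since the facet of $\sigma'$ opposite $\delta$ is again $\tau$, which spans $H_{\rho_k}$, I get $H_\delta = H_{\rho_k}$, while the hyperplanes of the rays shared with $\sigma$ are unchanged (here Corollary \ref{spanflatlink}(1) on spans being preserved across such crossings is exactly the right tool). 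Hence a single wall crossing replaces one ray by another carrying the same hyperplane and leaves the multiset $\{H_{\rho_1}, \ldots, H_{\rho_d}\}$ fixed; by connectivity of the maximal cones through wall crossings, this multiset is a single global set $\{H_1, \ldots, H_d\}$ of $d$ transverse hyperplanes.

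It then remains to read off the cross polytope. Every maximal cone is an orthant $\bigcap_k H_k^{\pm}$ of the arrangement $\{H_1, \ldots, H_d\}$, and since $\Sigma$ is complete its maximal cones must be exactly the $2^d$ orthants; the rays, being the edges of these orthants, are the $2d$ intersections of $d-1$ of the hyperplanes and occur in $d$ antipodal pairs. After a linear automorphism of $N_{\mathbb{R}}$ carrying $H_1, \ldots, H_d$ to the coordinate hyperplanes, $\Sigma$ becomes the fan of $\mathbb{P}^1 \times \cdots \times \mathbb{P}^1$, the normal fan of a cross polytope in the sense of the first part of the statement.

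I expect the main obstacle to be this first step: rigorously upgrading the cohomological condition $-D_\rho|_{D_\rho} = 0$ to the literal containment of all adjacent rays in a genuine hyperplane transverse to $u_\rho$. One must be careful that the ``realization'' of the link supplied by Remark \ref{extaffequiv} really records the literal ray generators rather than a deformed picture, which is why the self-contained wall-relation computation above (forcing the coefficient of $u_\rho$ to vanish and hence each newly produced off-ray back into $H_\rho$) is meant to secure coplanarity independently. Once this is in hand, the wall-crossing invariance of the hyperplane multiset and the orthant count are essentially bookkeeping, with the only remaining routine verifications being the transversality of the $d$ facet hyperplanes (immediate from simpliciality of $\sigma$) and the connectivity of $\Sigma$ in codimension one (standard for complete fans).
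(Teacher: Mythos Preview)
Your proof is correct and takes a genuinely different route from the paper's. The paper first establishes, for each $\rho$, that $|\lk_\Sigma(\rho)|$ is a hyperplane $H_\rho$ not containing $\rho$, and then runs a dichotomy on a second ray $\alpha$: either $H_\alpha \ne H_\rho$, in which case a relative-interior argument forces $\rho \in \lk_\Sigma(\alpha)$ (so $\alpha \in \lk_\Sigma(\rho)$), or $H_\alpha = H_\rho$, in which case $\alpha$ is the unique ray on the side of $H_\rho$ opposite $\rho$. This yields the combinatorial cross polytope; the paper then runs a separate induction on $\dim\Sigma$ (with explicit base cases $d = 1,2,3$) to upgrade the combinatorial picture to literal antipodality of rays.

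Your argument replaces both steps with a single global invariant: the $d$ facet hyperplanes $\{H_{\rho_1},\ldots,H_{\rho_d}\}$ of a maximal cone are preserved as a set under any wall crossing (because the new ray $\delta$ has $H_\delta$ equal to the span of the shared wall, which is exactly $H_{\rho_k}$ for the departing ray), so by connectivity every maximal cone is a chamber of one fixed transverse arrangement, and completeness forces all $2^d$ chambers to appear. This is cleaner and avoids the dimension induction entirely; antipodality of rays falls out for free since each ray lies on one of the $d$ lines $\bigcap_{k\ne i} H_k$. What the paper's longer route buys is the intermediate dichotomy ``same link hyperplane $\Leftrightarrow$ non-adjacent and opposite,'' which is exactly the template it reuses in Proposition~\ref{conormrestrsusp} for the general suspension picture, so that detour is not wasted in the paper's larger context. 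One small phrasing issue in your first paragraph: from a single wall relation you only get that the \emph{new} off-ray lies in $\Span(\gamma,\tau(1)\setminus\{\rho\})$, not that both off-rays lie in $\Span(\tau(1)\setminus\{\rho\})$; the containment in $H_\rho$ really is the inductive propagation you describe immediately afterward, so the argument is fine but the sentence as written overstates what one wall crossing gives.
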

	
	\begin{proof}
		Since $-D_{\rho}|_{D_\rho} = 0$, we have that the realization $|\lk_\Sigma(\rho)| \subset N_{\mathbb{R}}$ is a codimension 1 vector subspace of $N_{\mathbb{R}}$ that does \emph{not} contain $\rho$. Now consider a ray $\alpha \ne \rho$ of $\Sigma$. Its support is a codimension 1 vector subspace $|\lk_\Sigma(\alpha)| \subset N_{\mathbb{R}}$. \\

		Suppose that $|\lk_\Sigma(\alpha)| \ne |\lk_\Sigma(\rho)|$. Then, the fan $\lk_\Sigma(\alpha)$ must contain a ray $\mu$ that is on the same side of $|\lk_\Sigma(\rho)|$ as $\rho$. However, the only such ray is $\rho$ itself. This is because cones of a fan must intersect on faces (Definition 3.1.2 on p. 106 of \cite{CLS}), which are in the boundaries of the starting cones. Suppose that $\mu \ne \rho$. Write $\mu$ as a linear combination of elements of a basis of $N_{\mathbb{R}}$ constructed from extending a basis of $|\lk_\Sigma(\rho)|$ by $\rho$. If $\mu$ is not a multiple of $\rho$, both components are nonzero. The former (which gives a point inside a maximal cone of $\lk_\Sigma(\rho)$) is nonzero due to $\mu$ not being a multiple of $\rho$ and the latter nonzero since $\mu \notin |\lk_\Sigma(\rho)|$. This would mean that $\mu$ is the interior point of a maximal cone of $\Sigma$ containing $\rho$, which would contradict our point about intersection on faces. Thus, we have that $\mu = \rho$. In particular, this implies that $\rho \in \lk_\Sigma(\alpha)$ and $\alpha \in \lk_\Sigma(\rho)$. This also means that $\beta \notin \lk_\Sigma(\rho)$ implies that $|\lk_\Sigma(\beta)| = |\lk_\Sigma(\rho)|$. \\
		
		Now consider the case where $|\lk_\Sigma(\alpha)| = |\lk_\Sigma(\rho)|$. Since $\alpha \notin \lk_\Sigma(\alpha)$ and cones must intersect on faces, we have that $\alpha \notin |\lk_\Sigma(\alpha)|$. The argument above implies that $\alpha$ is on the opposite side of $|\lk_\Sigma(\rho)|$ from $\rho$ since $\alpha \ne \rho$. In addition, the same argument with $\alpha$ replacing $\rho$ implies that $\alpha$ itself is the only ray of $\Sigma$ on the opposite side of $|\lk_\Sigma(\rho)|$ from $\rho$. We note that such a ray must exist by completeness of $\Sigma$. \\
		
		Repeating this analysis for each ray $\alpha \in \lk_\Sigma(\rho)$ implies that the cones of $\Sigma$ have the \emph{combinatorial} structure of a cross polytope. In fact, we can actually go further and show that $\Sigma$ consists of a collection of antipodal rays. Note that the dimension 1 case is trivial since we are forced to have $\mathbb{P}^1$ if the support is a 1-dimensional vector space. Now suppose that $\dim \Sigma = 2$. Take a ray $\rho \in \Sigma(1)$. If $-D_\rho|_{D_\rho} = 0$, the realization $|\lk_\Sigma(\rho)| \subset N_{\mathbb{R}}$ is a codimension 1 linear subspace. Since $\dim N_{\mathbb{R}} = 2$ in this case, we have that $|\lk_\Sigma(\rho)| \subset N_{\mathbb{R}}$ actually forms a line. This means that $\lk_\Sigma(\rho)$ consists of a pair of antipodal rays $\alpha$ and $-\alpha$ in $N_{\mathbb{R}}$. If we start with $\alpha$ in place of $\rho$, the fact that $-D_\alpha|_{D_\alpha}$ implies that $|\lk_\Sigma(\alpha)| \subset N_{\mathbb{R}}$ is the line formed by the rays $\rho$ and $-\rho$. \\

		If $\dim \Sigma = 3$, we can make use of the dimension 2 case that yields a recursion for analyzing higher dimensions in general. Fix a ray $\rho \in \Sigma(1)$. Since $-D_\rho|_{D_\rho} = 0$, we have that $|\lk_\Sigma(\rho)| \subset N_{\mathbb{R}}$ is a codimension 1 linear subspace (i.e. a 2-plane). Consider a ray $\alpha \in \Sigma(1)$ with $\alpha \ne \rho$ and $|\lk_\Sigma(\alpha)| \ne |\lk_\Sigma(\rho)|$. Then, we have that $|\lk_\Sigma(\alpha)| \subset N_{\mathbb{R}}$ is a codimension 1 linear subspace generated by a codimension 1 linear subspace of $|\lk_\Sigma(\rho)|$ and $\rho$. We note that $\lk_\Sigma(\alpha)$ contains a ray $\beta$ that is on the opposite side of $|\lk_\Sigma(\rho)|$ from $\rho$. \\
		
		As observed above, we have that $|\lk_\Sigma(\beta)| = |\lk_\Sigma(\rho)|$. Note that the containment part of the equality holds since the intersection of unions is the union of intersections and cones of a fan intersect on faces (Definition 3.1.2 on p. 106 of \cite{CLS}). Since $|\lk_\Sigma(\alpha)| \ne |\lk_\Sigma(\rho)|$, we note that $|\lk_\Sigma(\beta)| \cap |\lk_\Sigma(\alpha)| = |\lk_\Sigma(\beta) \cap \lk_\Sigma(\alpha)|$ is a codimension 1 linear subspace of $|\lk_\Sigma(\alpha)|$. The dimension 2 case then implies that $\beta = -\rho$. We can repeat this set of arguments with each of the rays $\alpha \in \lk_\Sigma(\rho)$ replacing $\rho$ itself to see that the cones of $\Sigma$ are formed by picking one of the antipodal pairs in 3 axes like a cross polytope. \\

		We can repeat this argument for higher dimensions. Suppose that having $|\lk_\Gamma(\rho)|$ is a codimension 1 vector subspace of the support of a complete fan $\Gamma$ implies that the unique ray of $\Gamma$ on the opposite side of $|\lk_\Sigma(\rho)|$ from $\rho$ in the support $\Gamma$ itself is $-\rho$ in the support of $\Gamma$ if every conormal bundle of $\Gamma$ is trivial (i.e. $|\lk_\Gamma(\delta)| \subset |\Gamma|$ a codimension 1 vector subspace for all rays $\delta \in \Gamma(1)$). Now consider a complete fan $\Sigma$ such that $-D_\rho|_{D_\rho} = 0$ for all rays $\rho \in \Sigma(1)$. Fix a ray $\rho \in \Sigma(1)$ and take a ray $\alpha \ne \rho$ with $|\lk_\Sigma(\alpha)| \ne |\lk_\Sigma(\rho)|$. We note that $|\lk_{\lk_\Sigma(\alpha)}(\delta)| = |\lk_\Sigma(\alpha) \cap \lk_\Sigma(\delta)| = |\lk_\Sigma(\alpha)| \cap |\lk_\Sigma(\delta)| \subset |\lk_\Sigma(\alpha)|$ is a codimension 1 vector subspace for each $\delta \in \lk_\Sigma(\alpha)$. As mentioned in the $\dim \Sigma = 3$ case, the second equality uses the fact that cones of a fan intersect on faces and that these faces still lie on the fan (Definition 3.1.2 on p. 106 of \cite{CLS}). Then, $|\lk_\Sigma(\alpha)| \subset N_{\mathbb{R}}$ is a codimension 1 vector subspace and $\lk_\Sigma(\alpha)$ must contain a ray $\beta$ that is on the opposite side of $|\lk_\Sigma(\rho)|$ from $\rho$. Since $\beta \notin \lk_\Sigma(\rho)$, this implies that $|\lk_\Sigma(\beta)| = |\lk_\Sigma(\rho)|$. Since $|\lk_\Sigma(\alpha)| \ne |\lk_\Sigma(\rho)|$, this implies that $|\lk_\Sigma(\beta)| \cap |\lk_\Sigma(\alpha)| = |\lk_\Sigma(\beta) \cap \lk_\Sigma(\alpha)|$ is a codimension 1 vector subspace of $|\lk_\Sigma(\alpha)|$. Within $\lk_\Sigma(\alpha)$, the ray $\beta$ is the unique ray of $\lk_\Sigma(\alpha)$ on the opposite side of $|\lk_\Sigma(\alpha)| \cap |\lk_\Sigma(\rho)|$ from $\beta$. The induction assumption then implies that $\beta = -\rho$. \\

	\end{proof}
	
	The reasoning in Proposition \ref{vanconormcross} implies that the vanishing conormal bundle restrictions in Corollary \ref{mixvol0toconorm0} yield a cross polytope-like structure involving suspensions. We will focus on the combinatorial structure since this is what we use in subsequent structural results and we are only considering one conormal bundle restriction center at a time. \\

	\begin{prop} \textbf{(Vanishing conormal bundle restrictions and suspension-like structures) \\}
		\label{conormrestrsusp}
		
		Suppose that $-D_{j_r}|_{D_{j_1} \cap \cdots \cap D_{j_m}} = 0$ and that there is another ray $\alpha \in \lk_\Sigma(\sigma_{j_1, \ldots, \widehat{j_r}, \ldots, j_m})$ \emph{not} equal to $\rho_{j_r}$ such that $-D_\alpha|_{D_\alpha \cap D_{j_1} \cap \cdots \cap \widehat{D_{j_r}} \cap \cdots D_{j_m}} = 0$. Note that this latter condition is equivalent to $|\lk_\Sigma(\alpha, \sigma_{j_1, \ldots, \widehat{j_r}, \ldots, j_m})| \subset N(\sigma_{j_1, \ldots, \widehat{j_r}, \ldots, j_m})_{\mathbb{R}}$ being a codimension 1 vector subspace. There is at most one such ray $\beta \in \lk_\Sigma(\sigma_{j_1, \ldots, \widehat{j_r}, \ldots, j_m})$ \emph{not} in $\lk_\Sigma(\rho_{j_r})$ (equivalently not in $\lk_\Sigma(\sigma_{j_1, \ldots, j_m})$ under our conditions) such that $-D_\beta|_{D_\beta \cap D_{j_1} \cap \cdots \cap \widehat{D_{j_r}} \cap \cdots D_{j_m}} = 0$ and its link inside $\lk_\Sigma(\sigma_{j_1, \ldots, \widehat{j_r}, \ldots, j_m})$ must have support $|\lk_\Sigma(\beta, \sigma_{j_1, \ldots, \widehat{j_r}, \ldots, j_m})| = |\lk_\Sigma(\sigma_{j_1, \ldots, j_m})|$.

	\end{prop}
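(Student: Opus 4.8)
The plan is to reduce the statement to a fact about a single complete simplicial fan, namely the link $L := \lk_\Sigma(\sigma_{j_1,\ldots,\widehat{j_r},\ldots,j_m})$ realized in $V := N(\sigma_{j_1,\ldots,\widehat{j_r},\ldots,j_m})_{\mathbb{R}}$, and then to run the ``unique ray on each side of a hyperplane'' argument already present in Proposition \ref{vanconormcross}. First I would record that $L$ is a complete simplicial (locally convex) fan in $V$ and translate every conormal hypothesis into a statement about supports of links in $L$ using Remark \ref{extaffequiv}: the hypothesis $-D_{j_r}|_{D_{j_1}\cap\cdots\cap D_{j_m}} = 0$ says that $\rho_{j_r}$ is a ray of $L$ whose link support $H := |\lk_L(\rho_{j_r})| = |\lk_\Sigma(\sigma_{j_1,\ldots,j_m})|$ is a codimension $1$ vector subspace of $V$ with $\rho_{j_r}\notin H$, and likewise any candidate ray $\beta$ with $-D_\beta|_{D_\beta\cap D_{j_1}\cap\cdots\cap\widehat{D_{j_r}}\cap\cdots\cap D_{j_m}} = 0$ is a ray of $L$ whose link support $|\lk_L(\beta)|$ is a hyperplane in $V$. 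With this dictionary the two assertions become: (i) every such $\beta$ with $\beta\notin\lk_L(\rho_{j_r})$ satisfies $|\lk_L(\beta)| = H$; and (ii) there is at most one such $\beta$ other than $\rho_{j_r}$.

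The engine is the following lemma extracted from the proof of Proposition \ref{vanconormcross}: in a complete simplicial fan, if a ray $\rho$ has hyperplane link support $H$ then $\rho$ is the unique ray of the fan lying strictly on the $\rho$-side of $H$. I would reprove this exactly as there --- writing a would-be second ray $\mu$ in a basis extending a basis of $H$ by $\rho$, both components are nonzero, so $\mu$ is interior to a maximal cone $\Cone(\rho,\bar\sigma)$ (with $\bar\sigma$ a maximal cone of the complete fan $\lk_L(\rho)$ filling $H$), contradicting that cones meet along faces. I would pair this with the elementary remark that a complete fan meets every open half-space in one of its rays, since a maximal cone contained in a closed half-space cannot cover the complementary open half-space.

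Then the two assertions follow. For (i), suppose $H_\beta := |\lk_L(\beta)| \neq H$; then $H\cap H_\beta$ is a hyperplane of $H_\beta$, so the complete fan $\lk_L(\beta)$ (filling $H_\beta$) has a ray strictly on the $\rho_{j_r}$-side of $H$; by the lemma that ray is $\rho_{j_r}$, whence $\rho_{j_r}\in\lk_L(\beta)$, i.e. $\beta\in\lk_L(\rho_{j_r})$, contradicting the hypothesis on $\beta$. Hence $|\lk_L(\beta)| = H$, which is the claimed support equality; in particular $\beta\notin H$, since a ray never lies in its own link support. For (ii), the lemma applied to $\rho_{j_r}$ forces any such $\beta\neq\rho_{j_r}$ onto the side of $H$ opposite $\rho_{j_r}$; if $\beta,\beta'$ both qualify they lie on that common side, which is exactly the $\beta$-side of $H = |\lk_L(\beta)|$, so the lemma applied to $\beta$ gives $\beta = \beta'$. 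The auxiliary ray $\alpha$ of the hypothesis plays no role in the uniqueness or support computation --- it merely witnesses that the configuration is a genuine (non-degenerate) suspension, and when $\alpha\notin\lk_L(\rho_{j_r})$ it is itself forced to be this unique antipode $\beta$; note also that $\rho_{j_r}$ is the opposite pole.

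I expect the main obstacle to be not the geometric core (which is a faithful reuse of Proposition \ref{vanconormcross}) but the careful bookkeeping of the translation step: keeping straight which rays are quotiented out, verifying that $L$ is complete and simplicial in $V$ (so that the lemma applies) via Lemma \ref{convlink} and completeness of $\Sigma$, and confirming through Remark \ref{extaffequiv} that the three conormal-vanishing conditions really do become three hyperplane-link conditions in the \emph{same} ambient space $V$. A secondary point requiring care is the ``ray in every open half-space'' claim for the auxiliary fan $\lk_L(\beta)$, together with the bookkeeping convention that ``at most one $\beta$'' tacitly excludes the pole $\rho_{j_r}$ itself.
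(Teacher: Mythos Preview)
Your proposal is correct and follows essentially the same approach as the paper's proof: both reduce to the link $L$ in $V$, translate the conormal-vanishing hypotheses into hyperplane-link conditions via Remark \ref{extaffequiv}, and then run the ``unique ray on each side of a hyperplane'' argument from Proposition \ref{vanconormcross}. Your organization is somewhat more explicit than the paper's (you isolate the lemma and apply it twice, once for the support equality and once for uniqueness), and your observation that the auxiliary ray $\alpha$ plays no logical role in the uniqueness or support claims is correct and not stated in the paper, but the underlying argument is the same.
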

	
	\begin{proof}

		\vspace{3mm}
		
		The existence of a ray $\alpha \ne \rho_{j_r}$ such that $-D_\alpha|_{D_\alpha \cap D_{j_1} \cap \cdots \cap \widehat{D_{j_r}} \cap \cdots D_{j_m}} = 0$ is equivalent to $|\lk_\Sigma(\alpha, \sigma_{j_1, \ldots, \widehat{j_r}, \ldots, j_m})| \subset N(\sigma_{j_1, \ldots, j_m})_{\mathbb{R}}$ being a codimension 1 vector subspace. Then, we have that $\alpha \in \lk_\Sigma(\sigma_{j_1, \ldots, \widehat{j_r}, \ldots, j_m})$. If $|\lk_\Sigma(\alpha, \sigma_{j_1, \ldots, \widehat{j_r}, \ldots, j_m})| \ne |\lk_\Sigma(\sigma_{j_1, \ldots, j_m})|$, then $|\lk_\Sigma(\alpha, \sigma_{j_1, \ldots, \widehat{j_r}, \ldots, j_m})|$ contains a ray $x$ on the same side of $|\lk_\Sigma(\sigma_{j_1, \ldots, j_m})|$ as $\rho_{j_r}$. This would force $x = \rho_{j_r}$ and $\rho_{j_r} \in \lk_\Sigma(\alpha, \sigma_{j_1, \ldots, \widehat{j_r}, \ldots, j_m})$. Equivalently, we have that $\alpha \in \lk_\Sigma(\sigma_{j_1, \ldots, j_m}) = \lk_\Sigma(\rho_{j_r}) \cap \lk_\Sigma(\sigma_{j_1, \ldots, \widehat{j_r}, \ldots, j_m})$ (using flagness of $\Sigma$ for the second expression). \\
		
		If $|\lk_\Sigma(\alpha, \sigma_{j_1, \ldots, \widehat{j_r}, \ldots, j_m})| = |\lk_\Sigma(\sigma_{j_1, \ldots, j_m})|$, then we actually have \[ \lk_\Sigma(\alpha, \sigma_{j_1, \ldots, \widehat{j_r}, \ldots, j_m}) = \lk_\Sigma(\sigma_{j_1, \ldots, j_m}) \] since cones of a fan intersect on boundaries (think about 2-cones formed by adjacent rays and wall crossing arguments on p. 265 -- 266 of \cite{CLS}). In particular, this would imply that there are no rays of $\lk_\Sigma(\alpha, \sigma_{j_1, \ldots, \widehat{j_r}, \ldots, j_m})$ on the opposite side of $\lk_\Sigma(\sigma_{j_1, \ldots, j_m})$ from $\rho_{j_r}$ other than $\alpha$. This indicates the uniqueness of a choice of $\alpha$ that is \emph{not} adjacent to $\rho_{j_r}$. \\

		Suppose that there is a ray $\beta \notin \lk_\Sigma(\rho_{j_r})$ with $\beta \in \lk_\Sigma(\sigma_{j_1, \ldots, \widehat{j_r}, \ldots j_m})$, $\beta \ne \rho_{j_r}$, and $-D_\beta|_{D_\beta \cap D_{j_1} \cap \cdots \cap \widehat{D_{j_r}} \cap \cdots \cap D_{j_m}} = 0$ that is adjacent to a ray $\alpha \in \lk_\Sigma(\rho_{j_r})$ such that $-D_\alpha|_{D_\alpha \cap D_{j_1} \cap \cdots \cap \widehat{D_{j_r}} \cap \cdots \cap D_{j_m}} = 0$. This would mean that $|\lk_\Sigma(\beta, \sigma_{j_1, \ldots, \widehat{j_r}, \ldots, j_m})| = |\lk_\Sigma(\sigma_{j_1, \ldots, j_m})|$. \\
	\end{proof}

	\begin{rem} \textbf{(Some comments on antipodal points and possible connections to algebraic structures) \\}
		The reasoning in the proof of Proposition \ref{vanconormcross} seems to indicate that we also have some kind of antipodal ray structure. However, we need to be careful since we only look at a single ray at the ``center'' of the conormal bundle restriction at a time and do not make any adjacency assumptions. The cases where Proposition \ref{vanconormcross} does not apply directly start to occur when $\dim \Sigma = 3$. If $-D_{j_1}|_{D_{j_1} \cap D_{j_2}} = 0$, then $|\lk_\Sigma(\sigma_{j_1, j_2})| \subset N(\rho_{j_1})_{\mathbb{R}}$ is a codimension 1 vector subspace. Since $\dim N(\rho_{j_1})_{\mathbb{R}} = 2$, this implies that $|\lk_\Sigma(\sigma_{j_1, j_2})|$ is a line. In addition, a ray $\alpha \in \lk_\Sigma(\sigma_{j_1, j_2})$ such that $-D_\alpha|_{D_\alpha \cap D_{j_2}} = 0$ would yield a link $\lk_\Sigma(\alpha, \rho_{j_2})$ whose support is a line that is formed by $\rho_{j_1}$ and a ray whose ``residue'' in $\dim N(\rho_{j_1})_{\mathbb{R}}$ is $-\rho_{j_1}$. \\
		
		More generally, the ray $\beta$ must be on the opposite side of $|\lk_\Sigma(\sigma_{j_1, \ldots, j_m})|$ from $\rho_{j_r}$. In order to carry out an induction similar to Proposition \ref{vanconormcross}, we need $\beta$ to be adjacent to a ray $\alpha \in \lk_\Sigma(\sigma_{j_1, \ldots, j_m})$ such that $-D_\alpha|_{D_\alpha \cap D_{j_1} \cap \cdots \cap \widehat{D_{j_r}} \cap \cdots \cap D_{j_m} } = 0$ to provide a ``flat ambient space'' to work in where it is known that we are forced to have an antipodal point structure. \\ 
		
		Instead, we may try considering possible connections to algebraic structures and common rays inducing a flat wall crossing in Corollary \ref{suspalg} and Remark \ref{suspspecnon}. \\

	\end{rem}
	
	\color{black}
	
	We will combine Proposition \ref{conormrestrsusp} with Corollary \ref{mixvol0toconorm0} to look at the structure of special and non-special rays. \\

	\begin{cor} \textbf{(Suspension structures and special/non-special rays) \\} \label{suspspecnon}
		\vspace{-3mm}
		\begin{enumerate}
			
			\item Fix a ray $\gamma \in \lk_\Sigma(\rho)$. All but possibly one of the rays $\beta \ne \rho$ such $\gamma \in \lk_\Sigma(\beta)$ and is a special ray for $\beta$ are in $\lk_\Sigma(\rho)$. \\

			\item Fix a ray $\rho \in \Sigma(1)$ and a non-special ray $\delta$ of $\lk_\Sigma(\rho)$. All but possibly one of the non-special rays $\mu \in \lk_\Sigma(\rho)$ with $\mu \ne \delta$ are in $\lk_\Sigma(\delta)$. \\

			\item The statements of Part 1 and Part 2 hold with $\rho$ replaced by $\sigma_{i_1, \ldots, i_p}$ and replacing being special/non-special with respect to a particular ray with being special/non-special with respect to $\rho_{i_j}$ for all $j \in A$ for some given $A \subset [p]$. \\
		\end{enumerate}
	\end{cor}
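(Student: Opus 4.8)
The plan is to read every clause of the statement through the dictionary supplied by Corollary \ref{mixvol0toconorm0}, and then to feed the resulting conormal vanishings into the uniqueness assertion of Proposition \ref{conormrestrsusp}, which I will use as the ``antipode engine''. Recall that for a center ray $c$ and a base cone $\omega$, bullet~1 of Corollary \ref{mixvol0toconorm0} says that a ray $\gamma$ is special for $c$ exactly when $-D_c|_{D_c \cap D_\omega} = 0$, while bullet~2 says that a non-special ray $\delta$ carries the companion vanishing $-D_\delta|_{D_\delta \cap D_\gamma \cap D_\omega} = 0$ against any special $\gamma$. Proposition \ref{conormrestrsusp} then says precisely that, among all rays sharing a given such vanishing, at most one fails to be adjacent to the center. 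Each part is obtained by choosing the center, the base, and the exponent pattern so that ``the rays sharing the vanishing'' is exactly the family named in that part, and then translating the conclusion back via adjacency.

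For Part~1 I would take the center to be $\rho$ and the base to be the single ray $\gamma$, i.e.\ set $m=2$, $\{j_1,j_2\}=\{\rho,\gamma\}$ and $j_r = \rho$ in Proposition \ref{conormrestrsusp}. The hypothesis $-D_\rho|_{D_\rho\cap D_\gamma}=0$ is exactly the (meaningful) assumption that $\gamma$ is special for $\rho$, which places $\rho$ itself in the family and justifies the exclusion $\beta\neq\rho$; the rays $\beta$ with $-D_\beta|_{D_\beta\cap D_\gamma}=0$ are exactly those for which $\gamma$ is special for $\beta$, and by symmetry of adjacency $\gamma\in\lk_\Sigma(\beta)\Leftrightarrow\beta\in\lk_\Sigma(\gamma)$, so these $\beta$ range over the rays of the complete fan $\lk_\Sigma(\gamma)$. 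The conclusion ``at most one such $\beta$ lies outside $\lk_\Sigma(\rho)$'' is then verbatim Part~1, once one checks by flagness of $\Sigma$ (as in Lemma \ref{convlink}) that for $\beta\in\lk_\Sigma(\gamma)$ the conditions $\beta\in\lk_\Sigma(\rho)$ and $\beta\in\lk_\Sigma(\sigma_{\rho\gamma})$ coincide, matching the parenthetical equivalence in Proposition \ref{conormrestrsusp}.

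For Part~2 I would again invoke Proposition \ref{conormrestrsusp}, now with the non-special ray $\delta$ as center. The subtlety, and the step I expect to cost the most work, is that a non-special ray of $\lk_\Sigma(\rho)$ need not have a flat link over $\rho$ \emph{alone} (its link over $\rho$ is still a suspension when more than one suspension direction survives), so the correct base cone is $\sigma_{\gamma\rho}$ for a special ray $\gamma$: with $m=3$, $\{j_1,j_2,j_3\}=\{\delta,\gamma,\rho\}$ and $j_r=\delta$, the hypothesis $-D_\delta|_{D_\delta\cap D_\gamma\cap D_\rho}=0$ is exactly bullet~2 of Corollary \ref{mixvol0toconorm0}. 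Proposition \ref{conormrestrsusp} then yields at most one $\mu$ in the family failing to be adjacent to $\delta$. The real difficulty is matching the family ``$\mu$ with $-D_\mu|_{D_\mu\cap D_\gamma\cap D_\rho}=0$ lying in $\lk_\Sigma(\sigma_{\gamma\rho})$'' to the family ``non-special $\mu\in\lk_\Sigma(\rho)$'' named in the statement: I must show every non-special ray is adjacent to the chosen special $\gamma$ (so it is seen inside $\lk_\Sigma(\sigma_{\gamma\rho})$) while special rays are excluded. Both should follow from the structural output of Corollary \ref{mixvol0toconorm0} --- that every maximal cone of $\lk_\Sigma(\rho)$ contains the same number of special rays and that these span the maximal linear subspace --- which forces the special locus to meet every maximal cone and hence be joined to every apex; this is exactly the suspension picture of Proposition \ref{conormrestrsusp} read back into $\lk_\Sigma(\rho)$, with the non-special rays playing the role of antipodal apexes.

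Finally, Part~3 I would prove by rerunning the arguments for Parts~1 and~2 after replacing $\Sigma$, $N_{\mathbb{R}}$ and $\rho$ by $\lk_\Sigma(\sigma_{i_1,\ldots,i_p})$, the lift target $N(\rho_{i_j}:j\in[p]\setminus A)_{\mathbb{R}}$, and the cone $\sigma_{i_1,\ldots,i_p}$, and special/non-special by the $A$-relative notion of Definition \ref{specnonspecdef}. Proposition \ref{conormrestrsusp} is already stated for general $m$ and Corollary \ref{mixvol0toconorm0} for general $A\subset[p]$, so the only new bookkeeping is tracking the divisor $\sum_{j\in A}(-D_{i_j})$ and the lifts between $N(\sigma_{i_1,\ldots,i_p})_{\mathbb{R}}$ and $N(\rho_{i_j}:j\notin A)_{\mathbb{R}}$ through Remark \ref{extaffequiv}; the adjacency translations carry over unchanged because $\lk_\Sigma(\sigma_{i_1,\ldots,i_p})$ is again flag with convex support by Lemma \ref{convlink}. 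The central obstacle across all three parts is the Part~2 identification, namely confirming that the conormal-vanishing family is exactly the non-special rays and that these are mutually adjacent apart from a single antipode.
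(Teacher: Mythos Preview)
Your Part~1 and Part~3 match the paper's reasoning essentially verbatim: invoke Corollary~\ref{mixvol0toconorm0} to produce the conormal vanishing, then feed it to Proposition~\ref{conormrestrsusp}.

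For Part~2, your setup with $m=3$ and $j_r=\delta$ is exactly right, and you correctly isolate the real difficulty: matching the family of $\mu$ with $-D_\mu|_{D_\mu\cap D_\gamma\cap D_\rho}=0$ inside $\lk_\Sigma(\sigma_{\gamma\rho})$ to the family of \emph{all} non-special $\mu\in\lk_\Sigma(\rho)$. But your proposed resolution --- that ``the special locus meets every maximal cone and hence is joined to every apex'' --- only shows each non-special ray is adjacent to \emph{some} special ray, not to the particular $\gamma$ you fixed. So the step ``every non-special ray is adjacent to the chosen $\gamma$'' is not established, and this is the crux.

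The paper does not attempt to prove that claim. Instead it fixes $\delta$ and argues that the single exceptional ray $\delta'$ is \emph{independent of the choice of special anchor $\gamma$}, via explicit wall crossings. First, a wall crossing with $\delta$ as the off-wall ray produces a non-special $\delta'$ not adjacent to $\delta$; since all special rays of the starting maximal cone $\sigma$ remain on the wall, this same $\delta'$ is the exception relative to each of them. Second, wall crossings that move a special ray off the wall (sending $\gamma_1\mapsto\gamma_1'$ while keeping $\delta$ and the remaining special rays $\gamma_2,\ldots,\gamma_r$ fixed) propagate this: $\delta'$ is the exception for $\gamma_1'$ because it already was for $\gamma_2,\ldots,\gamma_r$, which are adjacent to $\gamma_1'$. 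Iterating traverses all special rays. The codimension~$1$ case ($r=1$), where this propagation argument has no second special ray to pivot through, is handled separately: here the only special rays are $\gamma$ and $-\gamma$, and a direct suspension argument shows every non-special ray is adjacent to both.

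So your outline is on the right track, but the adjacency claim you flag as needing work is genuinely the hard part, and your one-line justification does not close it; the paper's wall-crossing propagation is the missing mechanism.
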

	
	\begin{proof}
		\begin{enumerate}
			\item By Corollary \ref{mixvol0toconorm0}, we have that $-D_\rho|_{D_\gamma \cap D_\rho} = 0$ if $\gamma \in \lk_\Sigma(\rho)$ is a special ray of $\rho$. Proposition \ref{conormrestrsusp} then implies that all but possibly one ray $\beta$ such that $-D_\beta|_{D_\gamma \cap D_\beta} = 0$ (e.g. a ray $\beta$ such that $\gamma \in \lk_\Sigma(\beta)$ and is special with respect to $\beta$) satisfy $\beta \in \lk_\Sigma(\rho)$. \\
			
			\item We start by restricting to non-special rays of $\lk_\Sigma(\rho)$ attached to a fixed special ray $\gamma \in \lk_\Sigma(\rho)$. We will take a special ray $\gamma \in \lk_\Sigma(\rho) \cap \lk_\Sigma(\delta)$ of $\lk_\Sigma(\rho)$. By Corollary \ref{mixvol0toconorm0}, we have that $-D_\delta|_{D_\delta \cap D_\gamma \cap D_\rho} = 0$. Then, Proposition \ref{conormrestrsusp} implies that all but possibly one ray $\mu \in \lk_\Sigma(\gamma) \cap \lk_\Sigma(\rho)$ such that $-D_\mu|_{D_\mu \cap D_\gamma \cap D_\mu} = 0$ (e.g. a non-special ray of $\lk_\Sigma(\rho)$ that is adjacent to $\gamma$) such that $\mu \ne \delta$ is in $\lk_\Sigma(\delta)$. \\
			
			\noindent Suppose that the codimension of the divisor polytope associated to $-D_\rho|_{D_\rho}$ is greater than or equal to 2. Equivalently, we are assuming that any maximal cone of $\lk_\Sigma(\rho)$ contains 2 or more special rays. Now consider a wall crossing between maximal cones of $\lk_\Sigma(\rho)$ using $\delta$ as an off-wall ray (i.e. the ray that is replaced with a new ray). Let $\sigma \in \lk_\Sigma(\rho)$ be the starting maximal cone. The proof of Corollary \ref{mixvol0toconorm0} indicates that any replacement $\delta'$ of $\delta$ is again a non-special ray of $\lk_\Sigma(\rho)$. Since $\delta'$ and $\delta$ are \emph{not} adjacent (see Part 3 of proof of Corollary \ref{mixvol0toconorm0}), $\delta'$ is the unique ray that is a non-special ray of $\lk_\Sigma(\rho)$ that is adjacent to $\gamma$ and \emph{not} adjacent to $\delta$ by Proposition \ref{conormrestrsusp}. Since all of the special rays of $\lk_\Sigma(\rho)$ in the starting maximal cone $\sigma$ stay on the wall, the same statement holds with $\gamma$ replaced by \emph{any} special ray $\widetilde{\gamma} \ne \gamma$ of $\lk_\Sigma(\rho)$ lying in $\sigma$. Since our choice of maximal cone $\sigma \in \lk_\Sigma(\rho)$ is arbitrary, the same statement actually holds for \emph{any} special ray $\widetilde{\gamma}$ of $\lk_\Sigma(\rho)$ adjacent to $\gamma$ replacing $\gamma$. \\
			
			\noindent Consider a wall crossing starting with a maximal cone $\sigma \in \lk_\Sigma(\rho)$ containing $\delta$ where a special ray $\gamma \in \lk_\Sigma(\rho)$ used as the starting off-wall ray. Label the special rays of $\sigma$ as $\gamma_1, \ldots, \gamma_r$ (with $r \ge 2$ since we assumed that the codimension of $P_{-D_\rho}^{D_\rho}$ is greater than or equal to 2). Suppose that the special ray used as the off-wall ray is $\gamma_1$. Then, it is sent to another special ray $\gamma'_1$ of $\lk_\Sigma(\rho)$. Note that $\delta$ and $\gamma_i$ for $2 \le i \le r$ stay on the wall. If we follow this wall crossing by a wall crossing using $\delta$ as a the starting off-wall ray to be replaced, the fact that $\gamma_2, \ldots, \gamma_r$ are from the original maximal cone $\sigma$ mean that the unique special ray $\delta'$ \emph{not} in $\lk_\Sigma(\delta)$ that is adjacent to $\gamma_i$ is the same for each $2 \le i \le r$. Our previous argument regarding adjacent special rays implies that the same is true for $\gamma'_1$ as well since $\gamma'_1$ is adjacent to each of the special rays $\gamma_2, \ldots, \gamma_r$ of $\lk_\Sigma(\rho)$. These steps can be repeated for any wall crossing using a special ray of $\lk_\Sigma(\rho)$ as an off-wall ray. Note that the maximal cones of $\lk_\Sigma(\rho)$ containing $\delta$ can be traversed via wall crossings with $\delta$ on the wall. This means that the unique ray $\delta'$ \emph{not} adjacent to $\delta$ such that $\delta'$ is a special ray of $\lk_\Sigma(\rho)$ adjacent to $\gamma$ is the same for \emph{any} special ray $\gamma$ of $\lk_\Sigma(\rho)$ when the codimension of $P_{-D_\rho}^{D_\rho}$ is greater than or equal to 2. \\
			
			\noindent Now suppose that the codimension of the divisor polytope associated to $-D_\rho|_{D_\rho}$ is 1. Then, every maximal cone of $\lk_\Sigma(\rho)$ contains exactly 1 special ray. Since the special rays form a linear subspace of $|\lk_\Sigma(\rho)| \subset N_{\mathbb{R}}$ and it is 1-dimensional in this case, there is exactly 1 other special ray $-\gamma$ of \emph{any} maximal cone of $\lk_\Sigma(\rho)$. Since a wall crossing using a special ray as the initial off-wall ray replaces it with another special ray, any of $\lk_\Sigma(\rho)$ adjacent to $\gamma$ must also be adjacent to $-\gamma$ and $ \lk_\Sigma(\rho)$ is the suspension of the subfan consisting of cones generated by the non-special rays over $\gamma$ and $-\gamma$. This means that \emph{any} non-special ray $\delta$ of $\lk_\Sigma(\rho)$ is adjacent to \emph{both} $\gamma$ and $-\gamma$. In particular, being adjacent to a fixed special ray $\gamma$ or $-\gamma$ is \emph{not} an additional condition. Given a non-special ray $\delta$ of $\lk_\Sigma(\rho)$, this means that there is at most one non-special ray of $\lk_\Sigma(\rho)$ that is \emph{not} adjacent to $\delta$. \\
			
			\item This follows from the same reasoning as the proof of Part 1 and Part 2 with $\rho$ replaced by $\sigma_{i_1, \ldots, i_p}$ and noting that uniqueness of a condition implies that the existence of a case where a stronger condition holds must also imply that case is a unique example. 
		\end{enumerate}
	\end{proof}

	We can combine the discussion in Remark \ref{sign0background} and the structural results in Corollary \ref{suspspecnon} to describe a certain kind of ``cover'' of $\Sigma$ in toric varieties associated to locally convex fans whose signatures are 0. \\

	\begin{prop} \textbf{(Signature 0, link suspension structures, and special ray covers) \\} \label{sign0specialcov}
		Suppose that $\Sigma$ is locally convex and the signature $\sigma(X_\Sigma) = 0$. \\
		\begin{enumerate}
			\item For every ray $\rho \in \Sigma(1)$, the support $|\lk_\Sigma(\rho)| \subset N_{\mathbb{R}}$ comes from the maximal vector subspace contained in $|\lk_\Sigma(\rho)| \subset N_{\mathbb{R}}$ (say of dimension $r \ge 1$, generated by special rays) suspended by $d - 1 - r$ common pairs of (non-special) rays. \\
			
			\item We can look at how the links in Part 1 are connected around special rays using a cone or suspension structure. 
			\begin{enumerate}
				\item There is a ``cover'' of rays of $\Sigma$ centered around rays $\gamma$ that are special as rays in the link $\lk_\Sigma(\rho)$ of some ray $\rho$ with the ``surrounding rays'' given by such rays $\rho$. Part 1 of Corollary \ref{suspspecnon} indicates that there is a cone or a repeated suspension yielding such $\rho$. \\
				
				\item  If two ``mutually non-special'' adjacent rays $\rho_{i_1}$ and $\rho_{i_2}$ are \emph{not} in the same special ray block, then they do \emph{not} share a special ray. \\
				
				\item Coinciding ``centers'' of special ray covers involve different rays that are special with respect to a common set of rays (which may involve a partial overlap). \\
			\end{enumerate}

		\end{enumerate}
	\end{prop}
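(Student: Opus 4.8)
The plan is to treat Part 1 as a \emph{flat} (relative) cross polytope statement assembled from the pairing results of Corollary \ref{suspspecnon}, and Part 2 as a repackaging of the ``center/surrounding ray'' dichotomy of the same corollary together with its $p$-cone refinement. For Part 1, first I would fix $\rho \in \Sigma(1)$ and use $\sigma(X_\Sigma) = 0$ to get $D_\rho^d = 0$, so that by Remark \ref{sign0background} the divisor polytope $P_{-D_\rho}^{D_\rho}$ lies in the $(d-1)$-dimensional ambient space $\rho^\perp \cap M_{\mathbb{R}}$ but has dimension $\le d-2$; its orthogonal complement $L \subset |\lk_\Sigma(\rho)|$ is therefore a vector subspace of dimension $r \ge 1$. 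By the $p = 1$ case of Corollary \ref{mixvol0toconorm0} the special rays are exactly the generators of $L$ and every maximal cone of $\lk_\Sigma(\rho)$ carries the same number of them; since $\Sigma$ is simplicial the special rays of a single maximal cone are linearly independent, and by the flatness in Corollary \ref{spanflatlink} they span $L$, so each maximal cone has precisely $r$ special rays and $d-1-r$ non-special ones.

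Next I would produce the antipodal pairing of the non-special rays. Corollary \ref{suspspecnon}(2) says that for any non-special $\delta$ all but exactly one non-special ray is adjacent to $\delta$; writing $\delta'$ for the exception, the wall-crossing analysis in that proof shows $\delta'$ is \emph{common}, that is independent of the maximal cone and of the special ray chosen to traverse the link. This gives a fixed-point-free involution $\delta \mapsto \delta'$ on the non-special rays realizing the cross-polytope adjacency pattern of Proposition \ref{vanconormcross} in the quotient $N_{\mathbb{R}}/L$ (each maximal cone contains exactly one ray of each pair), and a dimension count leaves exactly $d-1-r$ pairs. Assembling, every maximal cone of $\lk_\Sigma(\rho)$ is $\Cone$ of its $r$ special rays together with one representative from each pair $\{\delta_k, \delta_k'\}$, so $|\lk_\Sigma(\rho)|$ is the repeated suspension of $L$ by the $d-1-r$ common pairs, as claimed.

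For Part 2(a) I would read Corollary \ref{suspspecnon}(1) in the opposite direction: fixing a ray $\gamma$ that is special in some $\lk_\Sigma(\rho)$, the rays $\beta$ for which $\gamma$ is special are, with at most one exception, contained in a single link, hence by the Part 1 analysis form a cone or a repeated suspension of one. Declaring these $\gamma$ the ``centers'' and the corresponding $\beta$ the ``surrounding rays'' produces the cover; it is a genuine cover because Part 1 forces $r \ge 1$, so every ray appears as a surrounding ray of each of the (at least one) special rays of its own link. For Part 2(b), I would argue the contrapositive: if the adjacent, mutually non-special rays $\rho_{i_1}, \rho_{i_2}$ shared a special ray $\gamma$, then both are surrounding rays of the center $\gamma$, and the suspension structure of Part 1 around $\gamma$ together with the uniqueness of the antipodal partner in Proposition \ref{conormrestrsusp} forces $\rho_{i_1}$ and $\rho_{i_2}$ into the same block, contradicting the hypothesis. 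Part 2(c) is then the $p > 1$ refinement handled by Corollary \ref{suspspecnon}(3) and the notion of being special \emph{with respect to} a subset $A \subset [p]$ from Definition \ref{specnonspecdef}: two covers meeting at a common center correspond to rays special with respect to sets $A$ that may only partially overlap.

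The step I expect to be the main obstacle is the ``commonness'' in Part 1 --- verifying that the antipodal partner $\delta'$ and the flat subspace $L$ genuinely do not move as one traverses the maximal cones by wall crossings, so that the combinatorial adjacency pattern assembles into one geometric iterated suspension rather than merely a cross-polytope-like nerve. This is precisely where the wall-relation bookkeeping of Corollaries \ref{mixvol0toconorm0} and \ref{suspspecnon}(2) must be invoked with care, in particular controlling the single exceptional ray so that it is identified with the antipodal partner and never introduces a second flat direction; the same careful tracking of ``special with respect to $A$'' is what makes the partial-overlap claim in Part 2(c) precise rather than heuristic.
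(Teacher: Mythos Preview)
Your proposal is correct and follows essentially the same approach as the paper: Part 1 is deduced from $D_\rho^d = 0$ together with Part 2 of Corollary \ref{suspspecnon}, and Part 2 from Part 1 of Corollary \ref{suspspecnon}. The paper's own proof is a two-line citation of exactly these ingredients, so your version simply unpacks the mechanics (the role of Corollary \ref{mixvol0toconorm0}, Corollary \ref{spanflatlink}, and the antipodal pairing) that the paper leaves implicit.
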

	
	\begin{proof}
		\begin{enumerate}
			\item This follows from the fact that $D_\rho^d = D_\rho \cdot (-D_\rho)^{d - 1} = 0$ for all $\rho \in \Sigma(1)$ and Part 2 of Corollary \ref{suspspecnon}. \\
			
			\item This is an application of Part 1 of Corollary \ref{suspspecnon}. 
		\end{enumerate}
	\end{proof}
	
	There are analogues of the results above for links over $p$-cones. \\
	
	\begin{prop} \textbf{(Special ray adjacency and extension of signature 0 special ray cover to $p$-cones) \\} \label{specialcovpconext}
		\begin{enumerate}
			\item The set of special rays of $\lk_\Sigma(\sigma_{i_1, \ldots, i_p})$ with respect to $\rho_{i_j}$ contains the set of special rays of $\lk_\Sigma(\rho_{i_j})$ with respect to $\rho_{i_j}$. \\
			
			\item If $\Sigma$ is locally convex and the signature $\sigma(X_\Sigma) = 0$, Part 1 implies that the special ray cover from Proposition \ref{sign0specialcov} applies with $\Sigma$ replaced by $\lk_\Sigma(\sigma_{i_1, \ldots, i_p})$ and replacing $N_{\mathbb{R}}$ by $N(\sigma_{i_1, \ldots, i_p})_{\mathbb{R}}$ (for some choice of codimension $p$ vector subspace of $N_{\mathbb{R}}$ \emph{not} containing $\rho_{i_j}$ for any $j \in [p]$). \\
		\end{enumerate}
	\end{prop}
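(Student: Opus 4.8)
The plan is to prove both parts by transporting everything through the conormal-bundle-restriction dictionary of Corollary \ref{mixvol0toconorm0}, so that the geometric content reduces to the transitivity of restriction along nested torus-invariant subvarieties. For Part 1, recall from the $A = \{ j \}$ case of Corollary \ref{mixvol0toconorm0} that specialness with respect to $\rho_{i_j}$ is detected by a vanishing conormal bundle restriction: a ray $\gamma \in \lk_\Sigma(\rho_{i_j})$ is special there iff $-D_{i_j}|_{D_\gamma \cap D_{i_j}} = 0$, while a ray $\gamma \in \lk_\Sigma(\sigma_{i_1, \ldots, i_p})$ is special there iff $-D_{i_j}|_{D_\gamma \cap D_{i_1} \cap \cdots \cap D_{i_p}} = 0$. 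Since $D_\gamma \cap D_{i_1} \cap \cdots \cap D_{i_p} \subseteq D_\gamma \cap D_{i_j}$, the restriction of a trivial bundle is again trivial, so the first vanishing forces the second. Dually, through the polytope picture of Remark \ref{sign0background} and Remark \ref{extaffequiv}, this says $P_{-D_{i_j}}^{D_{i_1} \cap \cdots \cap D_{i_p}}$ is the face of the nef divisor polytope $P_{-D_{i_j}}^{D_{i_j}}$ cut out by the cone generated by the images of $\rho_{i_k}$ for $k \ne j$; the orthogonal complement of a face always contains that of the whole polytope, so the maximal linear subspace (the flat part) of the realization of $\lk_\Sigma(\rho_{i_j})$ lifts into the flat part of the realization of $\lk_\Sigma(\sigma_{i_1, \ldots, i_p})$. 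As the special rays are by definition the generators of these flat parts, this is exactly the asserted containment.

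The one delicate point is the bookkeeping of ambient lattices. The flat part of $\lk_\Sigma(\rho_{i_j})$ sits in $N_{\mathbb{R}}$ (the $p = 1$, $A = \{ j \}$ lift is to $N(\rho_{i_k} : k \in \emptyset)_{\mathbb{R}} = N_{\mathbb{R}}$), whereas the flat part of $\lk_\Sigma(\sigma_{i_1, \ldots, i_p})$ with respect to $\rho_{i_j}$ sits in $N(\sigma_{i_1, \ldots, i_p})_{\mathbb{R}}$ lifted to $N(\rho_{i_k} : k \ne j)_{\mathbb{R}}$. I would fix once and for all a splitting $N \cong \bigoplus_k \mathbb{Z} \rho_{i_k} \oplus N'$ as in Remark \ref{extaffequiv}, realizing all three quotient spaces as compatible coordinate subspaces of $N_{\mathbb{R}}$, so that the containment of flat parts becomes an honest inclusion of vector subspaces; the conormal-vanishing argument above respects these identifications since it uses only inclusions of the underlying subvarieties. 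This is the step I expect to be the main obstacle: the geometry is a one-line restriction argument, but arranging the lifts of Definition \ref{specnonspecdef} to be mutually compatible is what makes ``contains'' a genuine containment of subsets of a single ambient space rather than a comparison across incompatible quotients.

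For Part 2, I would check that $\lk_\Sigma(\sigma_{i_1, \ldots, i_p})$, viewed as a fan in $N(\sigma_{i_1, \ldots, i_p})_{\mathbb{R}}$, meets the hypotheses of Proposition \ref{sign0specialcov}, with Part 1 supplying the needed special-ray data. Completeness of the link is inherited from completeness of $\Sigma$, and local convexity is inherited because $\Sigma$ is flag, so $\St_\Sigma(\sigma_{i_1, \ldots, i_p})$ has convex support by Lemma \ref{convlink} and the same flag-intersection argument applies to links taken within the link. The vanishing inputs required to re-run Proposition \ref{sign0specialcov} inside the link are precisely the restrictions $-D_{i_j}|_{D_\gamma \cap D_{i_1} \cap \cdots \cap D_{i_p}} = 0$ furnished by $\sigma(X_\Sigma) = 0$ through the mixed-volume vanishing for $1 \le p \le \frac{d}{2}$, and Part 1 guarantees that every special ray of the ambient link persists as a special ray over $\sigma_{i_1, \ldots, i_p}$. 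Hence every maximal cone of each relevant sub-link still contains a special ray, which is exactly the condition driving the suspension and special-ray-cover conclusions; re-running Proposition \ref{sign0specialcov} verbatim with $\Sigma$ and $N_{\mathbb{R}}$ replaced by $\lk_\Sigma(\sigma_{i_1, \ldots, i_p})$ and $N(\sigma_{i_1, \ldots, i_p})_{\mathbb{R}}$ then yields the claim.
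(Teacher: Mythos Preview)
Your proposal is correct and closely parallels the paper's argument, though the packaging differs slightly. For Part~1 the paper invokes Proposition~\ref{altpdperp} directly: the special rays of $\lk_\Sigma(\sigma_{i_1,\ldots,i_p})$ with respect to $\rho_{i_j}$ are generators of an intersection of wall-spans taken only over walls containing $\sigma_{i_1,\ldots,i_p}$, which is a \emph{subcollection} of the walls used for $\lk_\Sigma(\rho_{i_j})$; intersecting fewer sets yields a larger subspace, hence the containment. You instead pass through the equivalent conormal-vanishing characterization from Corollary~\ref{mixvol0toconorm0} and use transitivity of restriction ($-D_{i_j}|_{D_\gamma \cap D_{i_j}} = 0 \Rightarrow -D_{i_j}|_{D_\gamma \cap D_{i_1} \cap \cdots \cap D_{i_p}} = 0$). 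These are two faces of the same computation: your vanishing condition is, unwound, exactly the statement that $D_{i_j} \cdot V_\Sigma(\tau) = 0$ for all walls $\tau$ of the deeper link, which is automatic once it holds for all walls of the shallower link. Your route has the advantage of making the lattice bookkeeping you flag essentially invisible, since the restriction argument takes place on subvarieties rather than in quotient lattices. For Part~2 the paper simply points to Part~3 of Corollary~\ref{suspspecnon} (which already states the $p$-cone version) and Proposition~\ref{conormrestrsusp}; your plan to re-verify the hypotheses of Proposition~\ref{sign0specialcov} inside the link is a correct and slightly more explicit way to reach the same conclusion.
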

	
	\begin{proof}
		\begin{enumerate}
			\item Going back to Proposition \ref{altpdperp}, the set of special rays of $\lk_\Sigma(\sigma_{i_1, \ldots, i_p})$ with respect to $\rho_{i_j}$ comes from the intersection of spans of a subcollection of the walls intersected to obtain the set of special rays of $\lk_\Sigma(\rho_{i_j})$ with respect to $\rho_{i_j}$. \\
			
			\item Since $D_{i_j}^d = D_{i_j} \cdot (-D_{i_j})^{d - 1} = 0$ for all $j \in [p]$, this is a reflection of Part 3 of Corollary \ref{suspspecnon} applying to $p$ variables and the structural property it is based on (Proposition \ref{conormrestrsusp}) coming from $p$-cone properties. 
		\end{enumerate}
	\end{proof}

	So far, the structural information related to possible fan structures yielding a combination of linear subspaces and suspensions made use of the vanishing monomials from Remark \ref{sign0background} that are supported on a single variable. The information coming from monomials supported on $2 \le p \le \frac{d}{2}$ (``globally'' of even degrees and ``locally '' of odd degrees on each variable) will yield information on linear dependence and containment properties among maximal linear subspaces contained in realizations of links over designated (choices of) quotient spaces $N(\omega)_{\mathbb{R}}$ of $N_{\mathbb{R}}$ (e.g. see Remark \ref{flatambvar}). In order to do this we will use the fact the conormal bundle restrictions are nef divisors and move to the general setting of polytopes and odd tuple exponents yielding mixed volumes equal to 0 before connecting back to the context of conormal bundle restrictions above. \\
	
	\section{Exponents, (non)zero mixed volumes, and intersection patterns of maximal linear subspaces} \label{expmixedvol}

	Recall that we are interested in (simplicial) locally convex fans $\Sigma$ yielding signature 0 toric varieties $X_\Sigma$ (Remark \ref{sign0background}). This depends on showing that all top degree monomials supported on at most $\frac{d}{2}$ variables with even exponents on each ray divisor are 0 in the Chow ring (see Lemma 3.1 on p. 262 and the proof of Theorem 1.2(i) below Lemma 3.2 on p. 264 of \cite{LR}). We will work on the restrictions $D_{i_1} \cap \cdots \cap D_{i_p}$ in order to make use of the local convexity property, which implies global generation of the associated line bundles. On these restrictions, this means top degree exponents with \emph{odd} exponents on the basepoint free/nef conormal bundle restrictions $-D_{i_j}|_{D_{i_1} \cap \cdots \cap D_{i_p}}$ are equal to 0 (equation (5) on p. 264 of \cite{LR}). So far, we have mainly focused on $D_\rho^d = D_\rho \cdot (-D_\rho)^{d - 1} = 0$ for all $\rho \in \Sigma(1)$. \\
	
	Taking a closer look at the $p = 2$ and $p = \frac{d}{2}$ cases gives some information on suspension structures that is also relevant for higher $p$. In order to do this, we will treat the (non)vanishing of mixed volumes from the perspective of the \emph{exponents}. We start by covering related background in Section \ref{expmvind}. Note that the $p = 3$ case behaves similarly to the $p = 2$ case. There are is also a ``dichotomy'' in the behavior around $p$-cones for higher $p$ similar to implications of $p = 2$ identities. This relates to links whose realizations in $N_{\mathbb{R}}$ contain a positive-dimensional linear subspace and cases where every ray of the link is non-special with respect to some ray (which forces a suspension-like structure). These ideas are covered in Section \ref{p2higherpext}. \\
	
	Finally, we will also consider containments of linear subspaces of realizations from the $p = \frac{d}{2}$ case, linear dependence conditions from equality cases of submodularity inequalities, and how higher $p$ generally interpolates between them in Section \ref{higherpid}. This involves a more direct use of the identities coming from higher values of $1 \le p \le \frac{d}{2}$. It gives linear dependence and containment information on the linear subspaces we suspend around in Section \ref{wallfan}. Specific results involved are summarized in Part 3 and Part 4 of Theorem \ref{sig0fanstr} and described in more detail in the subsections mentioned above. \\
	
	\subsection{Exponents as indicator functions for mixed volumes} 
	\label{expmvind} 
	
	We can convert this directly into questions about polytopes via the conversion between products of basepoint free/nef divisors and mixed volumes of the associated polytopes (p. 1116 and p. 68 -- 69 of \cite{Ful}). Since we are working with (locally convex) simplicial fans, we will use equivalences between the globally generated and basepoint free conditions (Corollary 6.0.25 on p. 258 of \cite{CLS}) along with local convexity for those involving nef divisors for toric varieties associated to full-dimensional fans with convex support (Theorem 6.3.12 and Proposition 6.3.11 on p. 291 of \cite{CLS}). Thus, we will look for conditions on the starting polytopes which imply that all mixed volumes of a given degree (thought of as the top degree) with odd exponents on each polytope are equal to 0. \\
	
	We will start by studying this property in detail for the $p = 2$ case. This involves the behavior of the nonzero mixed volume region in general before focusing on odd exponents relevant to signature 0 toric varieties with locally convex fans. \\
	
	Here are some of the main tools used: \\

	\begin{thm} (Theorem 5.1.8 on p. 283 of \cite{Sch}) \label{nonzeromixvol} \\
		
		Given convex polytopes $K_1, \ldots, K_N$ lying in $\mathbb{R}^N$, the following are equivalent: \\
		
		\begin{enumerate}
			\item $V(K_1, \ldots, K_N) > 0$ \\
			
			\item There are line segments $S_i \subset K_i$ ($i = 1, \ldots, N$) with linearly independent directions. \\
			
			\item $\dim (K_{i_1} + \ldots + K_{i_k}) \ge k$ for each choice of indices $1 \le i_1 < \cdots < i_k \le N$ and for all $k \in \{ 1, \ldots, N \}$. \\
		\end{enumerate}

		Note that these conditions implicitly assume that $\dim K_i \ge 1$ for all $1 \le i \le N$, which is actually necessary to have $V(K_1, \ldots, K_N) > 0$ by multilinearity and nonnegativity properties of the mixed volume. A concrete interpretation of mixed volumes involving Minkowski sums of faces of dimensions given by exponents/multiplicities from distinct polytopes involved is given in Schneider's summation formula (Theorem 15.2.2 and discussion following it on p. 263 -- 264 of \cite{HRGZ}). \\
		
	\end{thm}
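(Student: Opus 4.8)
The plan is to establish the three conditions as equivalent by proving the cycle $(2) \Rightarrow (1) \Rightarrow (3) \Rightarrow (2)$. Throughout I would write $L_i := \Span(K_i - K_i)$ for the direction space of $K_i$ and use two elementary facts: that $\dim(K_{i_1} + \cdots + K_{i_k}) = \dim(L_{i_1} + \cdots + L_{i_k})$, so condition (3) is equivalent to the purely linear-algebraic statement $\dim(L_{i_1} + \cdots + L_{i_k}) \ge k$ for every subcollection; and that $K_i$ contains a nondegenerate segment in direction $v$ if and only if $v \in L_i \setminus \{0\}$. These two reductions let me route most of the argument through linear algebra and monotonicity, isolating the one genuinely analytic step.

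First, for $(2) \Rightarrow (1)$, suppose $S_i \subset K_i$ is a segment in direction $v_i$ with $v_1, \dots, v_N$ linearly independent. By monotonicity of the mixed volume, $V(K_1, \dots, K_N) \ge V(S_1, \dots, S_N)$, and computing $\vol_N(\sum_i \lambda_i S_i)$ as the volume of a parallelepiped shows $V(S_1, \dots, S_N) = \tfrac{1}{N!}\lvert \det(v_1, \dots, v_N) \rvert > 0$. Next, $(3) \Rightarrow (2)$ is a matroid-transversal step: in its linear form, condition (3) says exactly that the subspaces $L_1, \dots, L_N$ satisfy the hypothesis of Rado's theorem on independent transversals in the vector matroid of $\mathbb{R}^N$, which produces $v_i \in L_i$ with $v_1, \dots, v_N$ linearly independent; each such $v_i$ is the direction of a segment $S_i \subset K_i$, giving (2).

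The implication $(1) \Rightarrow (3)$ I would prove in contrapositive form, and this is where the real work lies. If (3) fails, reindex so that $K_1, \dots, K_k$ lie in a translate of a subspace $W$ of dimension $m \le k-1$, and set $B = \lambda_{k+1}K_{k+1} + \cdots + \lambda_N K_N$ and $C = \lambda_1 K_1 + \cdots + \lambda_k K_k \subset W$. Since projection onto $W^\perp$ collapses $C$ to a point, a Fubini estimate gives $\vol_N(tC + B) = O(t^m)$ as $t \to \infty$; comparing with the expansion $\vol_N(tC + B) = \sum_{j=0}^{N} \binom{N}{j} t^j V(C[j], B[N-j])$ forces $V(C[j], B[N-j]) = 0$ for every $j > m$. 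Because $C$ is homogeneous of degree one in $(\lambda_1, \dots, \lambda_k)$, it follows that $\vol_N(C + B)$ has total degree at most $m \le k-1$ in $(\lambda_1, \dots, \lambda_k)$ for each fixed $B$, so the coefficient of the degree-$k$ monomial $\lambda_1 \cdots \lambda_k$ vanishes identically. Extracting the coefficient of $\lambda_1 \cdots \lambda_N$ from $\vol_N(\sum_i \lambda_i K_i)$ in two stages, first in $\lambda_{k+1}, \dots, \lambda_N$ and then in $\lambda_1, \dots, \lambda_k$, then yields $V(K_1, \dots, K_N) = 0$.

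I expect the vanishing argument in $(1) \Rightarrow (3)$ to be the main obstacle, since it is the only step using analytic properties of the mixed volume rather than monotonicity or linear algebra, and one must take care that the growth bound $V(C[j], B[N-j]) = 0$ for $j > \dim W$ is justified directly (via the projection estimate above) to avoid circularity with the criterion being proved. The last sentence of the statement, namely that $\dim K_i \ge 1$ is necessary, then falls out immediately: taking $k=1$ in condition (3) gives $\dim K_i \ge 1$, which is forced once $V > 0$.
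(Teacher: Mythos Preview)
The paper does not give its own proof of this statement: it is quoted verbatim as Theorem 5.1.8 from Schneider \cite{Sch} and used as a black box, with the paper's only addition being the downstream ``exponent version'' (Theorem \ref{expmixvol}). So there is no argument in the paper to compare yours against.

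Your proof is correct and follows what is essentially the standard route. The cycle $(2)\Rightarrow(1)\Rightarrow(3)\Rightarrow(2)$ is the natural one: monotonicity plus the parallelepiped computation for $(2)\Rightarrow(1)$; Rado's theorem on independent transversals for $(3)\Rightarrow(2)$; and the degree/Fubini bound $\vol_N(tC+B)=O(t^m)$ with $m=\dim W<k$ for the contrapositive of $(1)\Rightarrow(3)$. The only place one might ask for a little more care is in the Fubini step: you should say explicitly that after translating $C$ into $W$, the slice $(tC+B)\cap(W+p)$ equals $tC + B_p$ with $B_p$ the $W$-slice of $B$ at height $p$, and that both the $m$-volume of these slices and the $(N-m)$-measure of the range $\pi_{W^\perp}(B)$ are bounded uniformly in $t$. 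With that made explicit the growth bound is clean, and the nonnegativity of all mixed volumes in the Steiner expansion then kills every term of degree $>m$, as you say. The coefficient extraction in two stages is a nice way to finish without invoking the very criterion you are proving.
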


	\begin{thm} (Version 2 of Theorem 5.1.8 on p. 283 of \cite{Sch}) \label{zerovolcrit} \\
		
		Given convex polytopes $K_1, \ldots, K_N$ lying in $\mathbb{R}^N$ ($N \ge 1$), the following are equivalent: \\
		
		\begin{enumerate}
			\item $V(K_1, \ldots, K_N) = 0$ \\

			\item We either have that some $K_i$ is a point or $\dim K_i \ge 1$ for each $1 \le i \le N$ and all line segments $S_i \subset K_i$ ($i = 1, \ldots, N$) have linearly dependent directions. \\
			
			\item We either have that some $K_i$ is a point or $\dim K_i \ge 1$ for each $1 \le i \le N$ and there is a choice of indices $1 \le i_1 < \cdots < i_k \le N$ for some $k \in \{ 1, \ldots, N \}$ such that $\dim (K_{i_1} + \ldots + K_{i_k}) < k$. \\
		\end{enumerate}
		
		The case where $K_i$ is a point for some $i$ actually shows up for some relevant examples where the conormal bundle restrictions arising from the rays is actually 0 such as self-products of $\mathbb{P}^1$ and cross polytopes. \\
		
	\end{thm}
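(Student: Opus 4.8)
The plan is to obtain Theorem \ref{zerovolcrit} as the logical negation of Theorem \ref{nonzeromixvol}. The key observation is that mixed volumes of convex bodies are nonnegative, so $V(K_1, \ldots, K_N) \ge 0$ always holds, and hence the condition $V(K_1, \ldots, K_N) = 0$ is precisely the negation of $V(K_1, \ldots, K_N) > 0$. Since the three conditions in Theorem \ref{nonzeromixvol} are equivalent, their negations are again equivalent, and it remains only to identify these negations explicitly with the three conditions stated here.

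First I would negate condition (2) of Theorem \ref{nonzeromixvol}. That condition asserts the existence of segments $S_i \subset K_i$ with linearly independent directions; in particular it forces each $K_i$ to contain a nondegenerate segment, i.e. $\dim K_i \ge 1$. Its negation therefore splits into two cases: either some $K_i$ contains no nondegenerate segment at all (equivalently $K_i$ is a point), or every $K_i$ satisfies $\dim K_i \ge 1$ yet no choice of segments $S_i \subset K_i$ has linearly independent directions, i.e. all such directions are linearly dependent. This is exactly condition (2) as stated.

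Next I would negate condition (3). That condition requires $\dim(K_{i_1} + \cdots + K_{i_k}) \ge k$ for every index subset and every $k$; taking $k = 1$ recovers $\dim K_i \ge 1$ for each $i$. Its negation asserts the existence of indices $1 \le i_1 < \cdots < i_k \le N$ with $\dim(K_{i_1} + \cdots + K_{i_k}) < k$. The $k = 1$ instance of such a deficiency is exactly the statement that some $K_i$ is a point, so once again the negation separates into the point case and the case where all $\dim K_i \ge 1$ but some subsum of size $k \ge 2$ has dimension strictly less than $k$, matching condition (3) here.

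Since the negations of conditions (1), (2), (3) of Theorem \ref{nonzeromixvol} are pairwise equivalent (being the negations of pairwise equivalent statements) and coincide with the three conditions above, this finishes the argument. I do not expect a genuine obstacle here: the only point requiring care is the bookkeeping of the degenerate ``point'' case, which in both (2) and (3) is exactly the failure of the implicit hypothesis $\dim K_i \ge 1$ built into the positive version, and which I would flag explicitly so the two disjuncts in each condition are seen to be complementary.
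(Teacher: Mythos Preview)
Your proposal is correct and matches the paper's approach: the paper presents Theorem \ref{zerovolcrit} as ``Version 2'' of Theorem \ref{nonzeromixvol} without a separate proof, relying on exactly the negation argument you outline. The only minor discrepancy is that the paper's condition (3) allows $k \in \{1,\ldots,N\}$ in the second disjunct while you restrict to $k \ge 2$; but as you observe, under $\dim K_i \ge 1$ for all $i$ the $k=1$ case cannot fail, so the two formulations are equivalent.
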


	\begin{thm} (Exponent version of Theorem 5.1.8 on p. 283 of \cite{Sch}) \label{expmixvol} \\
		Fix a collection of \emph{distinct} polytopes $Q_1, \ldots, Q_p$ of dimension $\ge 1$. Then, the mixed volume $V(Q_1, a_1; \ldots ; Q_p, a_p)$ with exponents $a_k \ge 0$ assigned to $Q_k$ for each $k \in [p]$ is nonzero if and only if $a_{i_1} + \ldots + a_{i_\ell} \le \dim(Q_{i_1} + \ldots + Q_{i_\ell})$ for all subsets $\{ i_1, \ldots, i_\ell \} \subset [p]$. \\
	\end{thm}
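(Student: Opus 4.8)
The plan is to reduce this statement directly to Theorem \ref{nonzeromixvol} by unwinding the exponent notation. Writing $N = a_1 + \cdots + a_p$, the symbol $V(Q_1, a_1; \ldots; Q_p, a_p)$ denotes the mixed volume $V(K_1, \ldots, K_N)$ of the multiset in which each $Q_i$ is listed $a_i$ times, so that all polytopes live in $\mathbb{R}^N$ exactly as required there. (One may assume each $a_i \ge 1$, since an exponent $a_i = 0$ simply deletes $Q_i$ from the list and removes index $i$ from both sides of the claimed inequalities.) I would then invoke the equivalence of conditions (1) and (3) in Theorem \ref{nonzeromixvol}: namely $V(K_1, \ldots, K_N) > 0$ holds if and only if $\dim(K_{j_1} + \cdots + K_{j_k}) \ge k$ for every sub-multiset $\{K_{j_1}, \ldots, K_{j_k}\}$ and all $k \in \{1, \ldots, N\}$.

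The key step is to observe that dimensions of these Minkowski sums depend only on which \emph{distinct} polytopes appear, not on their multiplicities. A sub-multiset of $\{K_1, \ldots, K_N\}$ is recorded by a vector $(b_1, \ldots, b_p)$ with $0 \le b_i \le a_i$, its size being $k = \sum_i b_i$ and its Minkowski sum being the $b_i$-fold repeated sum $\sum_i b_i Q_i$. Since the affine hull of a sum of $b_i \ge 1$ translates of $Q_i$ is a translate of the affine hull of $Q_i$ itself, one has $\dim\bigl(\sum_i b_i Q_i\bigr) = \dim\bigl(\sum_{i \in S} Q_i\bigr)$, where $S = \{i : b_i \ge 1\}$ is the support. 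Thus condition (3) of Theorem \ref{nonzeromixvol} becomes the requirement that $\sum_i b_i \le \dim\bigl(\sum_{i \in S} Q_i\bigr)$ for every admissible vector $(b_i)$.

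Finally I would carry out the combinatorial reduction from ``all sub-multisets'' to ``all subsets at full multiplicity.'' For a fixed support $S$, the left-hand side $\sum_i b_i$ is maximized by taking $b_i = a_i$ for all $i \in S$, while the right-hand side is fixed; hence the binding constraint attached to each $S$ is exactly $\sum_{i \in S} a_i \le \dim\bigl(\sum_{i \in S} Q_i\bigr)$, and these imply all the remaining constraints. This delivers both directions simultaneously: if the stated inequalities hold, then every sub-multiset satisfies condition (3), so $V > 0$; conversely, specializing condition (3) to $b_i = a_i$ for $i \in S$ and $b_i = 0$ otherwise recovers the stated inequality for each subset $S \subset [p]$.

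I expect the only genuine subtlety to be the bookkeeping with multiplicities, in particular verifying the dimension identity for repeated summands and confirming that the distinctness hypothesis on the $Q_i$ is precisely what makes indexing by subsets of $[p]$ (rather than by sub-multisets) correct, so that no two indices collapse and the reduction to full-multiplicity constraints is clean. The underlying geometry is entirely supplied by Theorem \ref{nonzeromixvol}; the remaining work is purely the translation between the list form and the exponent form of the mixed volume.
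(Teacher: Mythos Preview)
Your proposal is correct and follows essentially the same approach as the paper: reduce to condition (3) of Theorem \ref{nonzeromixvol}, observe that $\dim\bigl(\sum_i b_i Q_i\bigr)$ depends only on the support of $(b_i)$, and note that for each support set the binding constraint is the one with full multiplicities $b_i = a_i$. The paper's proof is terser but makes exactly these moves; your write-up simply spells out the bookkeeping more carefully.
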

	
	\begin{proof}
		Focusing on the role of multiplicities in Part 3 of Theorem \ref{nonzeromixvol}, the variable $k$ records the sum of the number of times each polytope is used in the Minkowski sum. To study the effect of repeats in polytopes used in mixed volumes, we note that $\dim(c_1 P_1 + \ldots + c_r P_r) = \dim(P_1 + \ldots + P_r)$ if $c_i > 0$ and $P_i$ are distinct polytopes. Since the total number of polytopes used in \emph{any} such Minkowski sum involving a collection of distinct polytopes $Q_{i_1}, \ldots, Q_{i_\ell}$ (including the maximal value $a_{i_1} + \ldots + a_{i_\ell}$) is bounded above by $\dim(Q_{i_1} + \ldots + Q_{i_\ell})$ in Part 3 of Theorem \ref{nonzeromixvol}, the nonzero mixed volume condition is equivalent to $a_{i_1} + \ldots + a_{i_\ell} \le \dim(Q_{i_1} + \ldots + Q_{i_\ell})$ for any subset $\{ i_1, \ldots, i_\ell \} \subset [N]$.  \\
	\end{proof}
	
	\begin{rem} \textbf{(Dimensions of Minkowski sums of polytopes and generalized permutohedra) \\} \label{permnot}
		\vspace{-3mm}
		
		\begin{enumerate}
			\item \textbf{(Generalized permutohedra and ``nonzero mixed volume inequalities'') \\}
			\vspace{0.5mm}
			When we intersect with a hyperplane $a_1 + \ldots + a_p = C$, the defining equation and inequalities look similar to those of generalized permutohedra (e.g. see p. 1041 of \cite{Pos}). In fact, setting \[ b_A \coloneq \dim \left( \sum_{j \in A} Q_j \right) \] for some initial collection of polytopes $Q_1, \ldots, Q_N$ yields a collection of parameters respecting the \textbf{submodularity inequalities} $b_S + b_T \ge b_{S \cup T} + b_{S \cap T}$ (e.g. see Theorem 3.9 on p. 1993 of \cite{CL}) and we \emph{do} obtain generalized permutohedra (e.g. compare with linear polymatroids on p. 30 of \cite{F1}). \\
			
			\noindent That being said, we will often call the defining inequalities \textbf{nonzero mixed volume inequalities} since our main motivation is to use the exponents as indicator functions for (non)vanishing of mixed volumes of a given collection of polytopes. A specialization of the submodularity inequality we will often end up using is that $b_A - b_{A \setminus k} \ge b_C - b_{C \setminus k}$ if $C \subset A$ and $k \in C$ (see Proposition \ref{minbaselift}). \\
			
			\item \textbf{(Assumptions relevant to our setting) \\}
			\vspace{0.5mm}
			In our setting, we will take the $b_A$ to be positive integers. We will often assume that $b_A \ge |A|$ and $R \subset S \Longrightarrow b_R \le b_S$. The first condition is necessary in order for odd tuples of exponents inducing nonzero mixed volumes to exist. Also, we note that the second condition always holds in the setting of the problem where \[ b_A \coloneq \dim \left( \sum_{j \in A} Q_j \right) \] for some initial collection of polytopes $Q_1, \ldots, Q_N$ (taken to be divisor polytopes of conormal bundle restrictions in our setting).  \\ 
			
			\item \textbf{(Some comments on notation) \\}
			\vspace{0.5mm}
			In general, parts that apply to arbitrary generalized permutohedra will use $N$ in place of $p$ in the locally convex fan setting. In addition, we will usually put $b_A = \dim \left( \sum_{j \in A} Q_j \right)$ for an initial collection of polytopes $Q_1, \ldots, Q_N$ although a large part of the material applies to arbitrary generalized permutohedra. \\ 
			
		\end{enumerate}

	\end{rem}

	We will list some properties of the objects in Remark \ref{permnot} that may be relevant in following subsections. \\
	
	\begin{rem} \textbf{(Defining inequalities and other properties of generalized permutohedra) \\} \label{initmvgenperm}
		\vspace{-3mm}
		
		\begin{enumerate}
			
			\item \textbf{(Initial conditions and effects on defining inequalities)} \\
			Since we will work with monomials of a fixed degree, we will often intersect the nonzero mixed volume inequalities for the exponents with a hyperplane of the form $a_1 + \ldots + a_p = C$ for a constant $C$ (e.g. Proposition \ref{oddalgoutput}). In our setting, we will usually have $C \ge \dim(Q_1 + \ldots + Q_p)$ since we will mainly study polytopes associated to basepoint-free/nef divisors on toric varieties. However, we will also consider smaller values of $C$ while studying recursive properties of the intersection of a hyperplane of this form with nonzero mixed volume inequalities. We will consider the effect of the intersection of the nonzero mixed volume region with such hyperplanes on the defining inequalities and regions where they are nontrivial. \\
			
			For example, the defining inequalities for the intersection of the nonzero mixed volume region with the hyperplane $a_1 + \ldots + a_p = \dim(Q_1 + \ldots + Q_p)$ is \[ \dim(Q_1 + \ldots + Q_p) - \dim \left( \sum_{j \in [p] \setminus J} Q_j \right) \le \sum_{j \in J} a_j \le \dim \left( \sum_{j \in J} Q_j \right) \] for all subsets $J \subset [p]$. On the hyperplane $a_1 + \ldots + a_p = \dim(Q_1 + \ldots + Q_p)$, the inequality above for $J$ and the inequality \[ \dim(Q_1 + \ldots + Q_p) - \dim \left( \sum_{j \in J} Q_j \right) \le \sum_{j \in [p] \setminus J} a_j \le \dim \left( \sum_{j \in [p] \setminus J} Q_j \right) \] for $[p] \setminus J$ are equivalent to each other. Set $J = \{ k \}$. This yields \[ \dim(Q_1 + \ldots + Q_p) - \dim(Q_1 + \ldots + \widehat{Q_k} + \ldots + Q_p) \le a_k \le \dim Q_k, \] which is equivalent to \[ \dim(Q_1 + \ldots + Q_p) - \dim Q_k \le a_1 + \ldots + \widehat{a_k} + \ldots + a_p \le \dim(Q_1 + \ldots + \widehat{Q_k} + \ldots + Q_p) \] from $J = [p] \setminus k$ on the hyperplane $a_1 + \ldots + a_p = \dim(Q_1 + \ldots + Q_p)$. \\
			
			\item \textbf{(Special properties of dimensions and lattice points)} \\

			\noindent In special cases where $\dim(Q_{i_1} + \ldots + Q_{i_\ell}) = \dim Q_{i_1} + \ldots + \dim Q_{i_\ell}$ for all subsets $\{ i_1, \ldots, i_\ell \} \subset [p]$ (i.e. the affine spaces spanned by vertices of the $Q_{i_j}$ translate to a direct sum of vector spaces), we seem to get permutohedra (Proposition 2.5 on p. 1032 of \cite{Pos}). \\
			
			\noindent Since we are studying lattice points on generalized permutohedra, we mention that a special case related to the $p = 3$ case of the inequalities from Part 1 yields slices of prisms/boxes whose Ehrhart theory was studied recently in \cite{FM}. However, we generally need to check if the defining constants given by $\dim(Q_{i_1} + \ldots + Q_{i_\ell})$ for subsets $\{ i_1, \ldots, i_\ell \} \subset [p]$ are compatible with the submodular condition (see Theorem 3.9 on p. 1993 of \cite{CL}) required to lie in the deformation cone of the braid fan (which would be needed for a generalized permutohedron). \\

			\item \textbf{((Signed) volumes and connections to wall crossings) \\}
			
			In addition, generalized permutohedra are generally \emph{signed} Minkowski sums of simplices (Proposition 2.3 on p. 843 of \cite{ABD}) and that volume formulas from Corollary 9.4 on p. 1058 of \cite{Pos} for usual Minkowski sums of simplices generalize to arbitrary generalized permutohedra since they also apply to signed Minkowski sums (Proposition 3.2 on p. 847 of \cite{ABD}). For example, this may be an issue for ``degenerate'' settings where $\dim(Q_{i_1} + \ldots + Q_{i_\ell}) = \max(\dim Q_{i_1}, \ldots, \dim Q_{i_\ell})$ for all subsets $\{ i_1, \ldots, i_p \} \subset [p]$ when the dimensions of the polytopes are \emph{not} all equal to each other. Finally, there is a survey of some background on submodularity/deformations of the braid fan on p. 1975 -- 1977 of \cite{CL} and a wall crossing perspective on deformation cones in general is discussed in \cite{ACEP} (see Lemma 2.11 on p. 9, Proposition 2.12 on p. 9 -- 10, and Lemma 2.14 on p. 10 -- 11 of \cite{ACEP}). \\
			
		\end{enumerate}
	\end{rem}
	
	\subsection{Implications of $p = 2$ identities and analogous behavior involving $p$-cones for higher $p$} \label{p2higherpext} 
	\vspace{2mm} 
	
	Returning to the $p = 2$ case, we have the following:
	
	\begin{prop}  \textbf{(Nonzero mixed volume region and odd exponent tuples for the $p = 2$ case)} \label{p2mvodd} \\
		\vspace{-3mm}
		\begin{enumerate}
			\item \textbf{(Nonzero mixed volume regions) } 
			
			Fix polytopes $Q_1$ and $Q_2$ of dimension $\ge 1$. The exponents $a_1$ and $a_2$ for $Q_1$ and $Q_2$ that yield a nonzero mixed volume are the lattice points $(a_1, a_2)$ in the region of the box $[0, \dim Q_1] \times [0, \dim Q_2]$ on or \emph{below} the line $a_1 + a_2 = \dim(Q_1 + Q_2)$. Note that we use points on or below the line since $\dim(Q_1 + Q_2) \le \dim Q_1 + \dim Q_2$. \\
			
			\item \textbf{(Extremal cases) }
			\begin{enumerate}
				\item The ``extremal'' cases in Part 1 occur when $\dim(Q_1 + Q_2) = \max(\dim Q_1, \dim Q_2)$ and $\dim(Q_1 + Q_2) = \dim Q_1 + \dim Q_2$. The former is like a maximal possible truncation and the latter does not induce a truncation at all and the nonzero mixed volume region takes up the entire box bounded by the points $(0, 0)$, $(\dim Q_1, 0)$, $(0, \dim Q_2)$, and $(\dim Q_1, \dim Q_2)$. In addition, the intersection of the latter case with the line $a_1 + a_2 = \dim(Q_1 + Q_2)$ is a single point $(\dim Q_1, \dim Q_2)$. \\
				
				\item  Assume without loss of generality that $\dim Q_1 \ge \dim Q_2$. Suppose that $0 \in \Int(Q_1) \cap \Int(Q_2)$. The maximal truncation/minimal $\dim(Q_1 + Q_2)$ occurs when $\Span(Q_2) \subset \Span(Q_1)$ and the minimal truncation/maximal $\dim(Q_1 + Q_2)$ occurs when $\Span(Q_1) \cap \Span(Q_2) = 0$. The former is ``flat'' and the latter involves a direct sum of vector spaces. To remove the assumption $0 \in \Int(Q_1) \cap \Int(Q_2)$, we can replace vector spaces spanned by $Q_1$ and $Q_2$ by translations of linear subspaces given by affine hulls of $Q_1$ and $Q_2$ (see p. 3 of \cite{Zie}). \\
			\end{enumerate}
			
			\item \textbf{(Intersections with lines $a_1 + a_2 = C$ for even $C$ and odd pairs in nonzero mixed volume regions) } 
			
			Let $C \ge 2$ be an even positive integer less than or equal to $\dim(Q_1 + Q_2)$. If $C < \dim Q_1 + \dim Q_2$, then the intersection of the line $a_1 + a_2 = C$ with the nonzero mixed volume region contains a pair $(a_1, a_2)$ where $a_1$ and $a_2$ are both odd. If $C = \dim Q_1 + \dim Q_2$ (which requires $\dim(Q_1 + Q_2) = \dim Q_1 + \dim Q_2$), this is possible if and only if $\dim Q_1$ and $\dim Q_2$ are both odd. The parity assumption on $C$ is motivated by considering degree $d - 2$ terms on $D_{\rho_{i_1}} \cap D_{\rho_{i_2}}$ for even $d$ when $p = 2$. \\
		\end{enumerate}
	\end{prop}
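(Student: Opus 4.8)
The plan is to treat all three parts as consequences of the exponent criterion in Theorem \ref{expmixvol} specialized to $p = 2$, reducing everything to the geometry of a single line inside a lattice box.

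For Part 1 I would apply Theorem \ref{expmixvol} to the two distinct polytopes $Q_1, Q_2$. The nonempty subsets of $[2]$ are $\{1\}$, $\{2\}$, and $\{1,2\}$, so the nonvanishing condition $V(Q_1, a_1; Q_2, a_2) > 0$ is exactly $a_1 \le \dim Q_1$, $a_2 \le \dim Q_2$, and $a_1 + a_2 \le \dim(Q_1 + Q_2)$. These three inequalities cut out precisely the part of the box $[0, \dim Q_1] \times [0, \dim Q_2]$ on or below the line $a_1 + a_2 = \dim(Q_1 + Q_2)$; that this line genuinely truncates the box is the standard bound $\dim(Q_1 + Q_2) \le \dim Q_1 + \dim Q_2$.

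For Part 2 I would first record the two-sided bound $\max(\dim Q_1, \dim Q_2) \le \dim(Q_1 + Q_2) \le \dim Q_1 + \dim Q_2$, the lower bound because $Q_1 + Q_2$ contains a translate of each summand and the upper bound from $\mathrm{aff}(Q_1 + Q_2) \subseteq \mathrm{aff}(Q_1) + \mathrm{aff}(Q_2)$. The two endpoints give exactly the extremal truncations of Part 2(a), and when $\dim(Q_1 + Q_2) = \dim Q_1 + \dim Q_2$ the line passes through the corner $(\dim Q_1, \dim Q_2)$ with the whole box below it, so the region is the full box and the line meets it only in that corner. For Part 2(b), after recentering so that $0 \in \Int(Q_1) \cap \Int(Q_2)$ (replacing spans by affine hulls otherwise, as indicated), the affine hulls become the linear subspaces $\Span(Q_i)$, and I would invoke $\dim(\Span(Q_1) + \Span(Q_2)) = \dim Q_1 + \dim Q_2 - \dim(\Span(Q_1) \cap \Span(Q_2))$; since $\dim Q_1 \ge \dim Q_2$, the overlap dimension is maximized exactly when $\Span(Q_2) \subseteq \Span(Q_1)$ (minimal sum dimension, the flat case) and is $0$ exactly when the spans meet trivially (maximal sum dimension, the direct-sum case).

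The substance is Part 3, a parity count on the lattice points of the line $a_1 + a_2 = C$. Since $C$ is even, $a_2 = C - a_1$ is odd precisely when $a_1$ is odd, so the question reduces to locating an odd integer in the interval $L \le a_1 \le U$ with $L = \max(1,\, C - \dim Q_2)$ and $U = \min(\dim Q_1,\, C - 1)$, the bounds enforcing $a_1, a_2 \ge 1$ together with the box constraints. I would first check nonemptiness using $C \ge 2$, $\dim Q_i \ge 1$, and $C \le \dim Q_1 + \dim Q_2$. If $U - L \ge 1$ the interval contains two consecutive integers and hence an odd one, so the only danger is the collapse $U = L$. Here I would run through the four ways the outer $\min$ and $\max$ are achieved: the combination forcing $U = L$ via $\dim Q_1 = C - \dim Q_2$ is exactly $C = \dim Q_1 + \dim Q_2$, which the hypothesis $C < \dim Q_1 + \dim Q_2$ excludes, while in each of the three remaining collapses the single point equals $1$ or $C - 1$ and is therefore odd. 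This is the one place where the strict inequality does real work, so ruling out the bad collapse is the main obstacle. Finally, for $C = \dim Q_1 + \dim Q_2$ the equality forces $\dim(Q_1 + Q_2) = \dim Q_1 + \dim Q_2$, and the only box point on the line is the corner $(\dim Q_1, \dim Q_2)$, whose coordinates are both odd if and only if $\dim Q_1$ and $\dim Q_2$ are, giving the stated equivalence.
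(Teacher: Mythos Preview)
Your proposal is correct and follows essentially the same approach as the paper: both invoke Theorem \ref{expmixvol} for Part 1, the dimension formula for sums of linear subspaces for Part 2, and a parity analysis of the lattice points on the segment $a_1 + a_2 = C$ inside the box for Part 3. Your Part 3 is organized a bit more compactly --- you parametrize the segment by the interval $[L,U]$ for $a_1$ and handle the collapse $U = L$ by a four-way case split on which branches of the $\min$/$\max$ are active --- whereas the paper instead splits into three cases according to the size of $C$ relative to $\dim Q_1$ and $\dim Q_2$ and exhibits the explicit odd pair $(C-1,1)$ in the first two cases; but these are just different bookkeepings of the same endpoint analysis, and the key observation in both is that the only way the segment degenerates to a single even-coordinate point is $C = \dim Q_1 + \dim Q_2$.
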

	
	\begin{proof}
		\begin{enumerate}
			\item

			By Theorem \ref{expmixvol}, the nonzero mixed volume condition on exponents $a_1$ and $a_2$ associated to polytopes $Q_1$ and $Q_2$ is determined by the inequalities $a_1 \le \dim Q_1$, $a_2 \le \dim Q_2$, and $a_1 + a_2 \le \dim(Q_1 + Q_2)$. As part of our initial conditions, we will assume that $a_1, a_2 \ge 0$ (although we will focus on positive integers later on). Also, we can assume without loss of generality that $\dim Q_1 \ge \dim Q_2$. \\

			\noindent To study the region where the exponents yield a nonzero mixed volume, we start by putting the region inside the box formed by $0 \le a_1 \le \dim Q_1$ and $0 \le a_2 \le \dim Q_2$. The portion of the box taken up by the nonzero mixed volume region depends on where the inequality $a_1 + a_2 \le \dim(Q_1 + Q_2)$ behaves ``nontrivially''. We can rewrite the inequality as $a_1 \le \dim(Q_1 + Q_2) - a_2$. Since $a_2 \le \dim Q_2$, the right hand side is greater than or equal to $\dim(Q_1 + Q_2) - \dim Q_2$. This implies that the inequality can only impose a nontrivial condition if $a_1 \ge \dim(Q_1 + Q_2) - \dim Q_2$. If $a_1 < \dim(Q_1 + Q_2) - \dim Q_2$, we already have $a_1 < \dim(Q_1 + Q_2) - \dim Q_2 < \dim(Q_1 + Q_2) - a_2$ for any $0 \le a_2 \le \dim Q_2$. We can repeat a similar statement for $a_2$ that the inequality $a_1 + a_2 \le \dim(Q_1 + Q_2)$ only imposes a nontrivial condition on $a_2 \ge \dim(Q_1 + Q_2) - \dim Q_1$. \\ 
			
			\noindent The boundary of the nonzero mixed volume region then consists of the ``horizontal'' line $a_2 = \dim Q_2$ for $0 \le a_1 \le \dim(Q_1 + Q_2) - \dim Q_2$, a ``diagonal'' bounded by $a_1 + a_2 = \dim(Q_1 + Q_2)$ for $\dim(Q_1 + Q_2) - \dim Q_2 \le a_1 \le \dim Q_1$, and a ``vertical'' line $a_1 = \dim Q_1$ for $0 \le a_2 \le \dim(Q_1 + Q_2) - \dim Q_1$. On the diagonal $a_1 + a_2 = \dim(Q_1 + Q_2)$, the nontriviality condition $\dim(Q_1 + Q_2) - \dim Q_2 \le a_1 \le \dim Q_1$ for $a_1$ is equivalent that of $a_2$ (which is given by $\dim(Q_1 + Q_2) - \dim Q_1 \le a_2 \le \dim Q_2$). In summary, the box formed by $0 \le a_1 \le \dim Q_1$ and $0 \le a_2 \le \dim Q_2$ is truncated by the line $a_1 + a_2 = \dim(Q_1 + Q_2)$. Here, we use the word ``truncate'' since $\dim(Q_1 + Q_2) \le \dim Q_1 + \dim Q_2$. \\

			\item Since Part 2(a) is a direct consequence of the description of the graph of the nonzero mixed volume region in Part 1, we focus on Part 2(b). The ``extremal'' cases come from situations where the minimum and maximal possible values of $\dim(Q_1 + Q_2)$ for given fixed values of $\dim Q_1$ and $\dim Q_2$ are attained. We note that $\dim(Q_1 + Q_2) \ge \max(\dim Q_1, \dim Q_2)$ and $\dim(Q_1 + Q_2) \le \dim Q_1 + \dim Q_2$. The first inequality follows from the fact that $Q_1 + Q_2$ contains both $Q_1$ and $Q_2$. For the second inequality, we use the definition of the dimension of a polytope as the affine hull of its points (p. 3 of \cite{Zie}) and the assumption $0 \in \Int(Q_1) \cap \Int(Q_2)$ to apply the identity $\dim(V + W) = \dim V + \dim W - \dim(V \cap W)$ for vector spaces $V$ and $W$. Without loss of generality, we can assume that $\dim Q_1 \ge \dim Q_2$. The equality $\dim(Q_1 + Q_2) = \max(\dim Q_1, \dim Q_2) = \dim Q_1$ is attained exactly when $\Span(Q_2) \subset \Span(Q_1)$. On the other hand, the maximal value $\dim(Q_1 + Q_2) = \dim Q_1 + \dim Q_2$ occurs when $\dim(Q_1 + Q_2) = \dim Q_1 + \dim Q_2$. This is exactly when $\Span (Q_1) \cap \Span(Q_2) = 0$. \\

			\item The point is to study where the line $a_1 + a_2 = C$ enters and leaves the nonzero mixed volume region (which was described for $p = 2$ in Part 1). Note that the initial condition $C \le \dim(Q_1 + Q_2)$ was necessary in order for the line $a_1 + a_2 = C$ to have a nonempty intersection with the nonzero mixed volume region. \\
			
			\noindent Since $a_1, a_2 \ge 0$, the defining inequalities of the intersection of the region $a_1 + a_2 \le C$ with the nonzero mixed volume region are $a_1 \le \min(C, \dim Q_1)$, $a_2 \le \min(C, \dim Q_2)$, and $a_1 + a_2 \le \min(C, \dim(Q_1 + Q_2))$. The intersection of the line $a_1 + a_2 = C$ with the nonzero mixed volume region occurs exactly where the inequality $a_1 + a_2 \le \min(C, \dim(Q_1 + Q_2))$ is satisfied nontrivially (in the sense of the proof of Part 1) assuming we are working with pairs $(a_1, a_2)$ satisfying the inequalities $a_1 \le \min(C, \dim Q_1)$ and $a_2 \le \min(C, \dim Q_2)$. This nontriviality region is given by $a_1 \ge \min(C, \dim(Q_1 + Q_2)) - \min(C, \dim Q_2)$ and $a_2 \ge \min(C, \dim(Q_1 + Q_2)) - \min(C, \dim Q_1)$. \\
			
			\noindent We will now split into cases according to the size of $C$ relative to $\dim Q_1$ and $\dim Q_2$. Without loss of generality, we may assume that $\dim Q_1 \ge \dim Q_2$. \\

			\textbf{Case 1: $C < \dim Q_2$ \\}
			
			\noindent In this case, the nontriviality region is $a_1 \ge C - C = 0$ and $a_2 \ge C - C = 0$. This implies that the entire segment of the line $a_1 + a_2 = C$ with $a_1, a_2 \ge 0$ is contained in the nonzero mixed volume region. Since $C \ge 2$ and $C$ is even, the point $(C - 1, 1)$ is a valid odd pair in the intersection of $a_1 + a_2 = C$ and the nonzero mixed volume region. \\

			\textbf{Case 2: $\dim Q_2 \le C \le \dim Q_1$ \\}
			
			\noindent In this case, the nontriviality region is $a_1 \ge C - \dim Q_2$ and $a_2 \ge C - C = 0$. This means that the line $a_1 + a_2 = C$ enters the nonzero mixed volume region at $(C - \dim Q_2, \dim Q_2)$ and leaves it at $(C, 0)$. Since $\dim Q_2 \ge 1$, the point $(C - 1, 1)$ lies in between $(C - \dim Q_2, \dim Q_2)$ and $(C, 0)$ on the line $a_1 + a_2 = C$. This gives a valid odd pair since $C$ is assumed to be even and $C \ge 2$. \\

			\textbf{Case 3: $C > \dim Q_1$ \\}
			
			\noindent In this case, the nontriviality region is $a_1 \ge C - \dim Q_2$ and $a_2 \ge C - \dim Q_1$. This means that the line $a_1 + a_2 = C$ enters the nonzero mixed volume region at $(C - \dim Q_2, \dim Q_2)$ and leaves it at $(\dim Q_1, C - \dim Q_1)$. If these points are distinct from each other, the portion of the line $a_1 + a_2 = C$ bounded by these contains at least two distinct lattice points and one of these must be an odd tuple by a parity argument. The points $(C - \dim Q_2, \dim Q_2)$ and $(\dim Q_1, C - \dim Q_1)$ are distinct if and only if $\dim Q_1 > C - \dim Q_2$, which is equivalent to having $C < \dim Q_1 + \dim Q_2$. Thus, the intersection of $a_1 + a_2 = C$ and the nonzero mixed volume region contains an odd pair if $C < \dim Q_1 + \dim Q_2$. If $C = \dim Q_1 + \dim Q_2$ (which requires $\dim(Q_1 + Q_2) = \dim Q_1 + \dim Q_2$ since $C \le \dim(Q_1 + Q_2)$), the only intersection of $a_1 + a_2 = C$ with the nonzero mixed volume region is the point $(\dim Q_1, \dim Q_2)$. This is an odd pair if and only if $\dim Q_1$ and $\dim Q_2$ are both odd. \\

		\end{enumerate}
	\end{proof}

	In general, we will be interested in odd tuples of points $(a_1, \ldots, a_p)$ in the intersection of the hyperplane $a_1 + \ldots + a_N = \dim(Q_1 + \ldots + Q_p)$ and the nonzero mixed volume region. The recursive properties that we consider may also involve intersections of hyperplanes of the form $a_1 + \ldots + a_p = C$ with the nonzero mixed volume region for $C \le \dim(Q_1 + \ldots + Q_p)$. Some of these cases may involve $C \ge p$ where $C$ and $p$ have the same parity. \\
	
	In the locally convex fan context, Corollary \ref{mixvol0toconorm0} indicates that the submodularity inequality conditions are only nontrivial for $p$-tuples of adjacent rays that are pairwise non-special. We keep this in mind in the statements of Corollary \ref{p2conorm} below and Remark \ref{linkmove}. \\

	\begin{cor} \textbf{($p = 2$ case and $\sigma(X_\Sigma) = 0$) \\}
		\label{p2conorm}
		Suppose that we are in the setting of Remark \ref{sign0background}. For each pair of adjacent rays $\rho_{i_1}$ and $\rho_{i_2}$, at least one of the following must hold:
		\begin{enumerate}
			\item $-D_{i_1}|_{D_{i_1} \cap D_{i_2}} = 0$ or $-D_{i_2}|_{D_{i_1} \cap D_{i_2}} = 0$ \\
			
			\item $\dim P_{(-D_{i_1}) + (-D_{i_2})}^{D_{i_1} \cap D_{i_2}} \le d - 3$. In other words, the support of $\lk_\Sigma(\sigma_{i_1, i_2})$ in $N_{\mathbb{R}}$ is a vector subspace of codimension 2. This means that the realization of $\lk_\Sigma(\sigma_{i_1, i_2})$ in $N_{\mathbb{R}}$ contains a linear subspace of dimension $\ge 1$. For mutually non-special adjacent rays $\rho_{i_1}$ and $\rho_{i_2}$, this is like belonging to a common ``special ray block'' in Proposition \ref{sign0specialcov}. \\

			\item Suppose that $\dim P_{(-D_{i_1}) + (-D_{i_2})}^{D_{i_1} \cap D_{i_2}} = d - 2$ (i.e. no positive dimensional linear subspaces are contained in the support of $\lk_\Sigma(\sigma_{i_1, i_2})$ in $N_{\mathbb{R}}$). Then, $b_{i_1} + b_{i_2} = b_{i_1, i_2}$ and $b_{i_1}$ and $b_{i_2}$ are \emph{not} both odd (i.e. at least one of them is even). We can imagine having two copies of versions of the figure on p. 9 of \cite{Ful} ``centered'' around different vector spaces spanning the ``ambient space'' in place of the line. \\
			
			\noindent In Proposition \ref{sign0specialcov}, this means that any ray of $\lk_\Sigma(\sigma_{i_1, i_2})$ is a special ray of some kind (so a ``center'') but $\rho_{i_1}$ and $\rho_{i_2}$ cannot both lie in the same special ray block (i.e. do not have a common special ray). Every ray of $\lk_\Sigma(\sigma_{i_1, i_2})$ being non-special with respect to one of $\rho_{i_1}$ or $\rho_{i_2}$ seems to imply a cross polytope-like structure from suspensions by Corollary \ref{conormrestrsusp}. A rough picture is two distinct linear subspaces spanning the entire ambient space that each support fans formed by repeated suspensions. \\
		\end{enumerate}
	\end{cor}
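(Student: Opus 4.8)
The plan is to convert the signature $0$ hypothesis into the vanishing of a small family of $p=2$ mixed volumes, record the only dimensional configurations this leaves open, and then read off the geometric statements from the structural results of Section \ref{wallfan}. First I would specialize Remark \ref{sign0background} to $p = 2$: since $\sigma(X_\Sigma) = 0$, every top-degree ($= d-2$) product of the conormal bundle restrictions on $D_{i_1} \cap D_{i_2}$ with an odd exponent on each factor vanishes. Writing $Q_1 = P_{-D_{i_1}}^{D_{i_1} \cap D_{i_2}}$ and $Q_2 = P_{-D_{i_2}}^{D_{i_1} \cap D_{i_2}}$ for the divisor polytopes of the (nef, by local convexity) conormal restrictions, this says $V(Q_1, a_1; Q_2, a_2) = 0$ for every pair of odd integers $a_1, a_2 \ge 1$ with $a_1 + a_2 = d - 2$. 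I then set $b_{i_1} = \dim Q_1$, $b_{i_2} = \dim Q_2$, and $b_{i_1, i_2} = \dim(Q_1 + Q_2) = \dim P_{(-D_{i_1}) + (-D_{i_2})}^{D_{i_1} \cap D_{i_2}}$, the last equality coming from $P_{D + E} = P_D + P_E$ for nef divisors (Remark \ref{extaffequiv}). Since these polytopes sit in the $(d-2)$-dimensional ambient space attached to $D_{i_1} \cap D_{i_2}$, one always has $b_{i_1, i_2} \le d - 2$, and subadditivity of dimension gives $b_{i_1, i_2} \le b_{i_1} + b_{i_2}$.

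Next I would split into the three cases and verify that they are exhaustive. If $b_{i_1} = 0$ or $b_{i_2} = 0$ then $-D_{i_1}|_{D_{i_1} \cap D_{i_2}} = 0$ or $-D_{i_2}|_{D_{i_1} \cap D_{i_2}} = 0$, which is Part 1. Otherwise $b_{i_1}, b_{i_2} \ge 1$, and as $b_{i_1, i_2} \le d - 2$ only the subcases $b_{i_1, i_2} \le d - 3$ (Part 2) and $b_{i_1, i_2} = d - 2$ (Part 3) remain. In the last subcase I apply Part 3 of Proposition \ref{p2mvodd} with the even constant $C = d - 2 = b_{i_1, i_2}$, which is admissible since $d$ is even (so $C \ge 2$) and $C \le \dim(Q_1 + Q_2)$. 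If $C < b_{i_1} + b_{i_2}$, that proposition exhibits a lattice point $(a_1, a_2)$ on $a_1 + a_2 = C$ with $a_1, a_2$ both odd lying in the nonzero mixed volume region, contradicting the vanishing above; hence $b_{i_1} + b_{i_2} \le C = b_{i_1, i_2}$, which together with $b_{i_1, i_2} \le b_{i_1} + b_{i_2}$ forces $b_{i_1} + b_{i_2} = b_{i_1, i_2}$. Proposition \ref{p2mvodd}(3) then says that on $C = b_{i_1} + b_{i_2}$ an odd pair exists exactly when $b_{i_1}$ and $b_{i_2}$ are both odd; since no odd pair is permitted, $b_{i_1}$ and $b_{i_2}$ are not both odd. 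This is precisely the conclusion of Part 3.

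Finally I would translate each numerical statement back into the advertised geometry. By Remark \ref{sign0background} the orthogonal complement of $Q_1 + Q_2$ is the maximal linear subspace in the realization of $\lk_\Sigma(\sigma_{i_1, i_2})$, so $b_{i_1, i_2} \le d - 3$ means that realization is a codimension $2$ vector subspace, while $b_{i_1, i_2} = d - 2$ means it contains no positive-dimensional linear subspace; the descriptions of special/non-special rays and of common ``special ray blocks'' follow from Corollary \ref{mixvol0toconorm0} and Proposition \ref{sign0specialcov}, and the two-block suspension picture in Part 3 is the output of Proposition \ref{conormrestrsusp} and Corollary \ref{suspspecnon}. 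I expect no deep obstacle in the mixed volume argument itself, which cleanly produces the numerical trichotomy; the genuinely delicate step is this last translation, namely matching the equality $b_{i_1} + b_{i_2} = b_{i_1, i_2}$ and the parity constraint to the picture of two distinct anchoring subspaces that jointly span the ambient space, which rests entirely on the link descriptions already established rather than on anything new.
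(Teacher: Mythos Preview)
Your proposal is correct and follows essentially the approach the paper intends: the corollary is stated immediately after Proposition \ref{p2mvodd} without a separate proof, and is meant to be exactly the specialization you carry out---translate $\sigma(X_\Sigma)=0$ via Remark \ref{sign0background} into vanishing of all odd-exponent degree-$(d-2)$ mixed volumes on $D_{i_1}\cap D_{i_2}$, then feed this into the $p=2$ trichotomy of Proposition \ref{p2mvodd}(3) with $C=d-2$ to force either a point polytope (Part 1), $b_{i_1,i_2}\le d-3$ (Part 2), or $b_{i_1}+b_{i_2}=b_{i_1,i_2}$ with the parity constraint (Part 3). The geometric readings you give at the end are likewise those the paper draws from Remark \ref{sign0background}, Corollary \ref{mixvol0toconorm0}, Proposition \ref{sign0specialcov}, and Proposition \ref{conormrestrsusp}.
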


	\begin{rem} \textbf{(Moving between ambient spaces and maximal linear subspaces vs. links) \\} \label{linkmove}
		Suppose that we are working on a $p$-tuple of adjacent rays that are pairwise non-special. Thinking about flat links from Remark \ref{flatambvar}, it looks like we may be able to think of Part 2 and Part 3 in terms of Corollary \ref{p2conorm} in terms of common extensions to maximal sets of adjacent non-special rays with respect to $\rho_{i_1}$ and $\rho_{i_2}$ in the context of $\lk_\Sigma(\rho_{i_1})$ and $\lk_\Sigma(\rho_{i_2})$. This seems to be related to thinking about linear subspaces lying in the intersection of $N(\rho_{i_1})_{\mathbb{R}}$ and $N(\rho_{i_2})_{\mathbb{R}}$ (designated choices of codimension 1 vector subspaces of $N_{\mathbb{R}}$ not containing $\rho_{i_1}$ and $\rho_{i_2}$ respectively) as codimension 2 linear subspaces of $N_{\mathbb{R}}$ not containing $\rho_{i_1}$ or $\rho_{i_2}$. \\

		Consider a collection of maximal adjacent non-special ray sets $M_\ell$ of $\lk_\Sigma(\sigma_{i_1, \ldots, i_p})$ with respect to $A_\ell \subset [p]$ such that $|\lk_\Sigma(\sigma_{i_1, \ldots, i_p}, M_\ell)| \subset N(\rho_{i_j} : j \in C_\ell)_{\mathbb{R}}$ ($C_\ell \coloneq [p] \setminus A_\ell$) is a vector subspace with $\bigcup_{\ell \in \mathcal{A}} C_\ell = [p]$ (equivalent to $\bigcap_{\ell \in \mathcal{A}} A_\ell = \emptyset$). We seem to get $|\lk_\Sigma(\sigma_{i_1, \ldots, i_p}, M_\ell : \ell \in \mathcal{A})| \subset N(\sigma_{i_1, \ldots, i_p})_{\mathbb{R}}$ a vector subspace, which seems to correspond to a ``flat'' lift to $N_{\mathbb{R}}$. More generally, removing the condition $\bigcup_{\ell \in \mathcal{A}} C_\ell = [p]$ seems to yield a containment in $N\left( \bigcup_{\ell \in \mathcal{A}} C_\ell \right)_{\mathbb{R}}$. In the context above, this is similar to a ``flat lift'' to $N \left( [p] \setminus \bigcup_{\ell \in \mathcal{A}} C_\ell \right)_{\mathbb{R}} = N \left( \bigcap_{\ell \in \mathcal{A}} A_\ell \right)_{\mathbb{R}}$. \\
	\end{rem}
	
	Since special ray covers also exist for higher $p$ by Proposition \ref{specialcovpconext}, methods similar to those used in the $p = 2$ can also be used for related structural information on fans. \\

	\begin{cor} \textbf{(Implications on $p$-cones based on $p = 2$ methods) \\} \label{pconelikep2}
		Suppose that $\rho_{i_1}, \ldots, \rho_{i_p}$ form a $p$-tuple of pairwise non-special rays. If $2 \le p \le \frac{d}{2}$, then one of the following hold:
		
		\begin{enumerate}
			\item $-D_{i_j}|_{D_{i_1} \cap \cdots \cap D_{i_p}} = 0$ for some $j \in [p]$ \\
			
			\item $\dim P_{(-D_{i_1}) + \ldots + (-D_{i_p})}^{D_{i_1} \cap \cdots \cap D_{i_p}} \le d - p - 1$. This is equivalent to the support of $\lk_\Sigma(\sigma_{i_1, \ldots, i_p})$ in $N_{\mathbb{R}}$ containing a positive dimensional linear subspace. This is equivalent to being contained in a common ``special ray block'' in Proposition \ref{specialcovpconext}. \\
			
			\item Every ray of $\lk_\Sigma(\sigma_{i_1, \ldots, i_p})$ is non-special with respect to \emph{some} $\rho_{i_j}$. Corollary \ref{conormrestrsusp} then implies that $\lk_\Sigma(\sigma_{i_1, \ldots, i_p})$ is obtained via cones or repeated suspensions of them. 
		\end{enumerate}
		
	\end{cor}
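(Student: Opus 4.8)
The plan is to run the same dimension dichotomy that drives the $p=2$ case of Proposition \ref{p2mvodd} and Corollary \ref{p2conorm}, now applied to the whole $p$-tuple of divisor polytopes at once, reading the signature $0$ vanishing of Remark \ref{sign0background} through the exponent criterion of Theorem \ref{expmixvol}. Write $Q_j := P_{-D_{i_j}}^{D_{i_1} \cap \cdots \cap D_{i_p}}$ for the divisor polytope of the conormal bundle restriction $-D_{i_j}|_{D_{i_1} \cap \cdots \cap D_{i_p}}$, nef by local convexity, and set $b_J := \dim\!\left( \sum_{j \in J} Q_j \right)$. These polytopes live in a realization space of dimension $d - p$, so $b_{[p]} \le d - p$ in every case, and the three conclusions will correspond precisely to the three ways $\big( \min_j b_j,\ b_{[p]} \big)$ can sit relative to the bounds $0$ and $d - p$. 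The hypotheses $2 \le p \le \frac{d}{2}$ and pairwise non-specialness are what make this meaningful: the former forces $d - p \ge p$, so that odd exponent tuples $(a_1, \ldots, a_p)$ with each $a_j \ge 1$ and $\sum_j a_j = d - p$ exist at all, while the latter is precisely the regime, recorded before Corollary \ref{p2conorm}, in which the submodularity conditions are nontrivial.

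First I would dispose of Case 1: if some $b_j = 0$ then $Q_j$ is a point, which by the discussion following Theorem \ref{zerovolcrit} is the statement $-D_{i_j}|_{D_{i_1} \cap \cdots \cap D_{i_p}} = 0$. So assume every $b_j \ge 1$ and split on full-dimensionality of the total sum. If $b_{[p]} \le d - p - 1$, then $\sum_j Q_j$ is not full-dimensional and, consistently, the hyperplane $\sum_j a_j = d - p$ lies outside the nonzero mixed volume region of Theorem \ref{expmixvol}, since the constraint $\sum_j a_j \le b_{[p]}$ already fails there; exactly as in Cases 1 and 2 of the $p=2$ analysis this makes the signature $0$ vanishing automatic. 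By the polar-dual dictionary of Remark \ref{sign0background} the positive-dimensional orthogonal complement of $\sum_j Q_j$ is the maximal linear subspace in the realization of $\lk_\Sigma(\sigma_{i_1, \ldots, i_p})$, generated by the special rays with respect to $[p]$ (Corollary \ref{mixvol0toconorm0}); this is Case 2 together with its ``common special ray block'' reading via Proposition \ref{specialcovpconext}.

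The remaining possibility $b_{[p]} = d - p$ is the boundary case where the hyperplane meets the region only along $\sum_j a_j = b_{[p]}$, mirroring Case 3 for $p = 2$, and it yields Case 3 here. Since orthogonal complements of sums are intersections of the individual complements (Remark \ref{extaffequiv}), $\bigcap_j Q_j^{\perp} = \left( \sum_j Q_j \right)^{\perp} = 0$ once $b_{[p]} = d - p$, so no ray of the link is perpendicular to every $Q_j$ at once; as the special rays with respect to $[p]$ generate this complement (Corollary \ref{mixvol0toconorm0}), no ray is special with respect to all the $\rho_{i_j}$, i.e. every ray is non-special with respect to some $\rho_{i_j}$. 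Corollary \ref{mixvol0toconorm0} simultaneously supplies, for each special ray $\gamma$ with respect to a fixed $\rho_{i_j}$, the vanishing restriction $-D_{i_j}|_{D_\gamma \cap D_{i_1} \cap \cdots \cap D_{i_p}} = 0$ that feeds Proposition \ref{conormrestrsusp}, and iterating that proposition over the rays exhibits $\lk_\Sigma(\sigma_{i_1, \ldots, i_p})$ as a repeated suspension.

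The delicate point is not the arithmetic but the translation among the three languages in play — dimensions $b_J$ of Minkowski sums, orthogonal complements of divisor polytopes, and special versus non-special rays — which naturally live in the different realization spaces $N(\omega)_{\mathbb{R}}$ of Remark \ref{extaffequiv} and Remark \ref{flatambvar}. The identity I would pin down most carefully is that being special with respect to each $\rho_{i_j}$ individually coincides with being special with respect to $[p]$, i.e. $\bigcap_j Q_j^{\perp} = \left( \sum_j Q_j \right)^{\perp}$, read inside one common ambient space. I expect the real obstacle to be the suspension conclusion of Case 3: applying Proposition \ref{conormrestrsusp} repeatedly needs its adjacency hypotheses to survive the wall crossings used to traverse the maximal cones of the link, and it is there that the vanishing restrictions coming from signature $0$, through Corollary \ref{mixvol0toconorm0}, are used in full.
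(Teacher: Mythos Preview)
Your proposal is correct and follows essentially the approach the paper intends: the corollary is stated without proof in the paper, with the title ``Implications on $p$-cones based on $p = 2$ methods'' signaling that one simply reruns the dimension trichotomy of Proposition \ref{p2mvodd} and Corollary \ref{p2conorm} on the full tuple $(Q_1,\ldots,Q_p)$, which is exactly what you do. Your identification of Case 3 via $\bigl(\sum_j Q_j\bigr)^{\perp} = \bigcap_j Q_j^{\perp} = 0$ is the right translation, and your caution about the iterated suspension step is warranted but not a gap---the paper itself leaves that clause as a pointer to Proposition \ref{conormrestrsusp} rather than spelling out the iteration.
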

	
	\subsection{Linear subspace containments from $p = \frac{d}{2}$, linear dependence via submodularity inequality equality cases, and interpolations in higher $p$ identities}  \label{higherpid}

	We return to implications of identities from higher $p$ related to nonexistence of $p$-tuples of odd exponents yielding nonzero mixed volumes. The $p = \frac{d}{2}$ case looks like containments/dependencies involving which $(-\rho_{i_j})$-heights are 0 (see p. 421 of \cite{Mat} and p. 282 -- 283 of \cite{LR}). \\

	\begin{prop} \textbf{($p = \frac{d}{2}$ case and containments/dependencies in fan structure) \\} \label{pdover2}
		For any $\frac{d}{2}$-tuple of adjacent rays $\rho_{i_1}, \ldots, \rho_{i_{\frac{d}{2}}}$, there is a $k$-tuple (say indexed by $\{ i_{j_1}, \ldots, i_{j_k} \}$) such that \[ \dim P_{\sum_{\ell = 1}^k (-D_{i_{j_\ell}})}^{ D_{i_1} \cap \cdots \cap D_{i_{\frac{d}{2}} }} \le k - 1. \] After some threshold (following an ordering of the variables), there is an instance where $R \subset P$ with $P$ being the Minkowski sum of polytopes considered so far and $R$ being the new polytope we are taking the Minkowski sum with. In theory, we may have an instance where the upper bound does \emph{not} after the threshold such as a sequence of dimensions of successive Minkowski sums $(1, 2, 2, 4, 5)$. \\
		
		These can be understood in terms of containments of non-special rays (Definition \ref{specnonspecdef} and Remark \ref{extaffequiv}). In terms of orthogonal complements (which relate to special rays and linear subspaces contained in link realizations), we have $\omega_P \subset \omega_R$. Implicitly, we have been using the implication $D, E$ nef meaning that $P_{D + E} = P_D + P_E$ (p. 69 of \cite{Ful}). \\
	\end{prop}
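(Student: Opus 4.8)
The plan is to reduce the whole statement to the zero mixed volume criterion (Theorem \ref{zerovolcrit}) applied to a single exponent tuple, and then to read off the span containment by a pigeonhole argument on partial Minkowski sums. First I would record the numerology of the $p = \frac{d}{2}$ range: the orbit closure $D_{i_1} \cap \cdots \cap D_{i_{d/2}} = V_\Sigma(\sigma_{i_1, \ldots, i_p})$ has dimension $d - p = \frac{d}{2} = p$, so the conormal divisor polytopes $Q_j := P_{-D_{i_j}}^{D_{i_1} \cap \cdots \cap D_{i_p}}$ (which are nef/globally generated by local convexity) live in a $p$-dimensional ambient space and their top-degree product is a full mixed volume of $p$ polytopes. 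The only tuple of \emph{odd} exponents $(a_1, \ldots, a_p)$ with $\sum_j a_j = p$ is $(1, \ldots, 1)$, since $p$ odd summands each $\ge 1$ and summing to $p$ must all equal $1$. Hence in this range the single vanishing identity furnished by $\sigma(X_\Sigma) = 0$ (Remark \ref{sign0background}) is $V(Q_1, \ldots, Q_p) = 0$.

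Next I would feed this into part 3 of Theorem \ref{zerovolcrit}. If some $Q_{i_j}$ is a point, i.e. $-D_{i_j}|_{D_{i_1} \cap \cdots \cap D_{i_p}} = 0$, the conclusion holds trivially with $k = 1$. Otherwise every $Q_j$ has dimension $\ge 1$, and the criterion produces a subset $\{i_{j_1}, \ldots, i_{j_k}\}$ with $\dim(Q_{i_{j_1}} + \cdots + Q_{i_{j_k}}) \le k - 1$. Using $P_{D + E} = P_D + P_E$ for nef $D, E$ (p. 69 of \cite{Ful}), this Minkowski sum equals $P_{\sum_{\ell = 1}^k (-D_{i_{j_\ell}})}^{D_{i_1} \cap \cdots \cap D_{i_{d/2}}}$, which is exactly the displayed dimension bound.

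For the threshold assertion I would order the chosen polytopes and track the partial sums $P_m := Q_{i_{j_1}} + \cdots + Q_{i_{j_m}}$ with $P_0 := \{0\}$. Their dimensions are nondecreasing, $\dim P_1 = \dim Q_{i_{j_1}} \ge 1$, and $\dim P_k \le k - 1$. If every increment $\dim P_m - \dim P_{m-1}$ were $\ge 1$ we would get $\dim P_k \ge k$, a contradiction; so some step $m \ge 2$ has $\dim P_m = \dim P_{m-1}$, meaning the new polytope $R := Q_{i_{j_m}}$ contributes no new affine directions to $P := P_{m-1}$, that is $\Span(R) \subseteq \Span(P)$. Dualizing spans gives $\omega_P \subset \omega_R$, and the $(1, 2, 2, 4, 5)$ example shows that this flatness need not persist once it first appears, so the threshold is genuinely a threshold and not a monotone phenomenon.

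Finally I would translate $\omega_P \subset \omega_R$ back to the fan through the dictionary of Remark \ref{sign0background} and Remark \ref{extaffequiv}: the orthogonal complement of $P_{\sum (-D_{i_{j_\ell}})}^{\cdots}$ is the maximal linear subspace (spanned by special rays, Definition \ref{specnonspecdef}) of the corresponding lifted link realization, so the containment records that the special-ray subspace attached to $P$ sits inside that attached to $R$, equivalently a containment among the relevant non-special rays. The main obstacle I expect is not the mixed-volume input, which is a one-line application, but the bookkeeping: keeping the span/orthogonal-complement correspondence consistent across the different lift spaces $N(\omega)_{\mathbb{R}}$, and phrasing the ordering argument so that the threshold is a property of the distinguished $k$-tuple produced by Theorem \ref{zerovolcrit} rather than of an arbitrary ordering of all $\frac{d}{2}$ variables.
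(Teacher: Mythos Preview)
Your proposal is correct and follows the same approach the paper takes. In the paper this proposition is stated without a separate proof environment: the reasoning (apply the zero mixed volume criterion of Theorem \ref{zerovolcrit} to the unique odd tuple $(1,\ldots,1)$, use $P_{D+E}=P_D+P_E$ for nef divisors, then read off $\omega_P\subset\omega_R$ via a pigeonhole on partial Minkowski sums) is embedded in the proposition text itself, and you have simply made each of these steps explicit.
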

	
	\pagebreak 
	
	\begin{rem} \textbf{(Degeneracy of the $p = \frac{d}{2}$ case) \\}
		In some sense, the $p = \frac{d}{2}$ case is the ``most degenerate''. This is because $D_{i_1} \cdots D_{i_{\frac{d}{2}}} \cdot D_{i_1} \cdots D_{i_{\frac{d}{2}}} = 0$ and we fail the condition $b_A \ge |A|$, which is necessary for an odd tuple to exist in the nonzero mixed volume region. \\
	\end{rem}
	
	\color{black}
	
	The $p = 2$ and $p = \frac{d}{2}$ cases both illustrate properties that will be observed for general $2 \le p \le \frac{d}{2}$. We will first reframe $p = 2$ in the context of equality cases of submodularity inequalities. \\

	\begin{exmp} \textbf{($p = 2$ and submodularity inequality equality cases in general) \\} \label{submodineqeqgeom}
		Part 3 of the $p = 2$ case in Proposition \ref{p2mvodd} can be thought of an equality case of the submodularity inequality. Recall that the submodularity inequality states that $b_S + b_T \ge b_{S \cup T} + b_{S \cap T}$ for $S, T \subset [p]$ (Theorem 3.9 on p. 1993 of \cite{CL}). In the $p = 2$, we had $S = \{ i_1 \}$ and $T = \{ i_2 \}$. \\
		
		This also specializes to $C \subset A$ and $k \in C$ implies that $b_A - b_{A \setminus k} \le b_C - b_{C \setminus k}$ when we take $S = C$ and $T = A \setminus k$. In the $p = 2$ case, we have $A = \{ i_1 \}$, $C = \{ i_1, i_2 \}$, and $k = i_1$. \\

		Thinking about dimensions of Minkowski sums of polytopes, the general shape is like considering polytopes $P \supset Q$ (in terms of spans) and cases where $\dim(P + R) - \dim P = \dim(Q + R) - \dim Q$. We note that $\dim(P + R) - \dim P \le \dim(Q + R) - \dim Q$ in general. The example we have in mind takes $R$ as a single new polytope and $P$ as a Minkowski sum of a collection of polytopes containing those whose Minkowski sum is $Q$. In the framework above, $R$ is labeled by $k$. This is equivalent to $P \cap R \subset Q$, which can be phrased in terms of orthogonal complements as $\omega_Q \subset \omega_P + \omega_R$, which is a linear dependence relation. \\

	\end{exmp}

	Suppose that $\sum_{j \in [N]} = a_j = b_{[N]}$. On the restriction to this hyperplane, we note that the point $(a_1, \ldots, a_N)$ yields a point of the generalized permutohedron (e.g. yields nonzero mixed volumes) if and only if \[ b_{[N]} - b_{[N] \setminus A} \le \sum_{j \in A} a_j \le b_A \] for all $A \subset [N]$. For example, we have that \[ b_{[N]} - b_{[N] \setminus k} \le a_k \le b_k \] for all $k \in [N]$. \\
	
	In particular, appending $a_k = b_{[N]} - b_{[N] \setminus k}$ to a tuple $(a_1, \ldots, \widehat{a_k}, \ldots, a_N)$ satisfying the nonzero mixed volume inequalities with respect to $[N] \setminus k$ yields a point $(a_1, \ldots, a_N)$ satisfying the nonzero mixed volume inequalities with respect to $[N]$ by the submodularity inequality. \\

	\begin{prop} \label{minbaselift}
		If a point $(a_1, \ldots, \widehat{a_k}, \ldots, a_N)$ satisfies the nonzero mixed volume inequalities for $[N] \setminus k$, then appending $a_k = b_{[N]} - b_{[N] \setminus k}$ produces a point $(a_1, \ldots, a_N)$ respecting the nonzero mixed volume inequalities for $[N]$. \\
	\end{prop}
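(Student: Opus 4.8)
The plan is to verify the defining inequalities of the nonzero mixed volume region for the enlarged ground set directly, namely to show that the appended point $(a_1, \ldots, a_N)$ satisfies $\sum_{j \in A} a_j \le b_A$ for every $A \subset [N]$ (together with nonnegativity of all coordinates), and to do this by a clean case split on whether $k \in A$. First I would dispose of the bookkeeping: the new coordinate $a_k = b_{[N]} - b_{[N] \setminus k}$ is nonnegative because of the monotonicity assumption $R \subset S \Rightarrow b_R \le b_S$ recorded in Remark \ref{permnot}, so the appended tuple still lies in the nonnegative orthant and the hypothesis already supplies $a_j \ge 0$ for $j \ne k$.

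For any $A$ with $k \notin A$ there is nothing to prove: such an $A$ is contained in $[N] \setminus k$, so the inequality $\sum_{j \in A} a_j \le b_A$ is literally one of the nonzero mixed volume inequalities that $(a_1, \ldots, \widehat{a_k}, \ldots, a_N)$ satisfies by hypothesis. The substantive case is $k \in A$, and here the plan is to invoke submodularity once. Writing $A' = A \setminus k$, I would apply the submodularity inequality $b_S + b_T \ge b_{S \cup T} + b_{S \cap T}$ with $S = A$ and $T = [N] \setminus k$; since then $S \cup T = [N]$ and $S \cap T = A'$, it rearranges to
\[ a_k = b_{[N]} - b_{[N] \setminus k} \le b_A - b_{A'}. \]
Combining this with the hypothesis $\sum_{j \in A'} a_j \le b_{A'}$ (valid because $A' \subset [N] \setminus k$) yields $\sum_{j \in A} a_j = \sum_{j \in A'} a_j + a_k \le b_{A'} + (b_A - b_{A'}) = b_A$, which is exactly the required inequality.

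The only genuine content is this single application of submodularity, so I do not expect a real obstacle; the one thing to be careful about is the \emph{direction} of the marginal-increment inequality. What is needed is $b_{[N]} - b_{[N] \setminus k} \le b_A - b_{A \setminus k}$ for $A \subset [N]$ with $k \in A$ — the diminishing-returns form in which the increment from adjoining $k$ is smaller for the larger set $[N]$ than for its subset $A$. This is precisely the specialization described in Example \ref{submodineqeqgeom} (take $C = A$ and the ambient set to be $[N]$), and the safest way to avoid a sign slip is to derive it on the spot from $b_S + b_T \ge b_{S \cup T} + b_{S \cap T}$ with the assignment $S = A$, $T = [N] \setminus k$ rather than quoting the inequality from memory.
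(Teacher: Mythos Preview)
Your proof is correct and follows essentially the same approach as the paper: the same case split on whether $k \in A$, and for the substantive case $k \in A$ the same single application of submodularity with $S = A$, $T = [N] \setminus k$ to get $b_{[N]} - b_{[N] \setminus k} \le b_A - b_{A \setminus k}$, then the same two-line chain of inequalities. Your additional remark on nonnegativity of $a_k$ via monotonicity is a reasonable piece of bookkeeping that the paper leaves implicit.
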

	
	\begin{proof}
		This is a consequence of the submodularity inequality $b_S + b_T \ge b_{S \cup T} + b_{S \cap T}$. Consider a subset $A \subset [N]$ such that $A \ni k$. Setting $S = A$ and $T = [N] \setminus k$, we have that $S \cup T = [N]$ and $S \cap T = A \setminus k$. Substituting this back into the submodularity inequality, we have that $b_A + b_{[N] \setminus k} \ge b_{[N]} + b_{A \setminus k}$. This is equivalent to having $b_A - b_{A \setminus k} \ge b_{[N]} - b_{[N] \setminus k}$. \\
		
		For $(a_1, \ldots, a_N)$ and the nonzero mixed volume inequalities for $[N]$, we only need to consider subsets $A \subset [N]$ such that $A \ni k$ since $(a_1, \ldots, \widehat{a_k}, \ldots, a_N)$ satisfies the nonzero mixed volume inequalities for subsets of $[N] \setminus k$. Taking the sum of the coordinates indexed by elements of $A$, we have
		
		\begin{align*}
			\sum_{j \in A} a_j &= \sum_{j \in A \setminus k} a_j + a_k \\
			&= \sum_{j \in A \setminus k} a_j + (b_{[N]} - b_{[N] \setminus k}) \\
			&\le b_{A \setminus k} + (b_{[N]} - b_{[N] \setminus k}) \\
			&\le b_{A \setminus k} + (b_A - b_{A \setminus k}) \\
			&= b_A.
		\end{align*}
		
		The second inequality follows from the form of the submodularity inequality written out in the first paragraph. \\
	\end{proof}

	The first point one might come up with after repeatedly using such points (e.g. in an attempt to produce a point using induction on dimension) is related to previous optimization-related work on generalized permutohedra/submodular functions. \\
	
	\begin{rem} \textbf{(Optimization-related comments) \\}
		\label{optpov}
		\vspace{-3mm}
		\begin{enumerate}
			
			\item If we optimize (e.g. minimize) a linear form on the restriction of the intersection of nonzero mixed volume region and the top hyperplane $a_1 + \ldots + a_N = b_{[N]}$ (a generalized permutohedron, also called the base polyhedron of a submodular function $f$ with $f(A) = b_A$) to odd tuples, the optimal point on the restriction to odd tuples is either an optimal point overall or is $e_i - e_j$ ($i \ne j$) away from the boundary of the generalized permutohedron. Note that the converse might not necessarily hold (i.e. such points on the boundary aren't necessarily more optimal than $x$). Heuristically, one may expect to have something like an optimal point among odd tuples. We may have a point that looks locally optimal, but being globally optimal via a greedy algorithm seems to generally use the submodularity property (which is ``close'' to holding but not necessarily exactly due to parity restrictions/modifications). This is based on a modification of the proof of Theorem 3.15 on p. 61 -- 62 of \cite{F1}. \\

			\item Fix a permutation $\sigma \in S_N$. On each turn, we choose the minimal point possible (see Corollary 3.3 on p. 313 -- 314 of \cite{FT} and Theorem 3.15 on p. 61 of \cite{F1}). This means that the inequalities involving terms of index $\sigma(\ell)$ for $\ell \le N - 1$ are the smallest possible (i.e. seem to need ``at least'' these coordinates to work if odd tuples satisfying the nonzero mixed volume inequality exist). For checking whether the output of the algorithm is compatible with the nonzero mixed volume inequalities, we seem to have a collection of potential discrepancy/strengthening of the submodularity inequality (pieces broke into segments added together). Working recursively, this seems to be related to a comparison between $(b_{\sigma([i_\ell])} - b_{\sigma([i_\ell - 1])}) + \mu_\ell$ (with $\mu_\ell$ being the ``discrepancy'' added) and $(b_{ \{ \sigma(i_1,), \ldots, \sigma(i_\ell) \} } - b_{ \{ \sigma(i_1,), \ldots, \sigma(i_{\ell - 1}) \} })$. We note that $b_{\sigma([i_\ell])} - b_{\sigma([i_\ell - 1])} \le b_{ \{ \sigma(i_1,), \ldots, \sigma(i_\ell) \} } - b_{ \{ \sigma(i_1,), \ldots, \sigma(i_{\ell - 1}) \} }$ in general since $b_C - b_{C \setminus k} \ge b_A - b_{A \setminus k}$ for $C \subset A$ and $k \in C$ by the submodularity inequality. This seems to be related to an induction on the size $\ell$ of the index sets considered. \\
			
			% Applying this to our algorithm, we have 
		\end{enumerate}
	\end{rem}

	% Attempt to do something similar on odd tuples...
	If we would like to produce points preserving the nonzero mixed volume property while keeping the coordinates $a_j$ odd (a sort of expected extreme point among odd tuples), we can produce an algorithm for doing this. \\

	Fix an ordering of the variables $a_1, \ldots, a_N$ coming from a permutation $\pi \in S_N$. This essentially involves choosing $b_{\pi(1)}$ or $b_{\pi(1)} - 1$ for the value of $a_{\pi(1)}$ at the beginning (whichever one is odd) and then choosing whatever is the minimal possible choice of odd $a_{\pi(2)}$ that could possibly produce a point $(a_{\pi(1)}, a_{\pi(2)})$ respecting the nonzero mixed volume inequalities. We continue this as we increase the input $i$ in $\pi(i)$ until $i = N$. Note that this seems to list points in the opposite order from the optimization literature such as \cite{FT} and \cite{F1} (which seem to use the ``opposite'' $\sigma \in S_N$ defined by $\pi(q) = \sigma(N + 1 - q)$, which implies that $[N] \setminus \pi([q]) = \sigma([N - q])$. \\
	
	Before listing the general algorithm, we illustrate how it would work in the $p = 3$ case and how it relates to equality cases of the submodularity inequality as in the $p = 2$ case. As mentioned in Part 3 of Remark \ref{permnot}, we will use $N$ in place of $p$ for the number of variables in results on arbitrary generalized permutohedra. We will assume that $b_{[N]}$ and $N$ have the same parity and that $b_A \ge |A|$ for all $A \subset [N]$. The second assumption is necessary in order for odd tuples to exist in the nonzero mixed volume region in the first place. In addition, we will assume that $b_R \le b_S$ if $R \subset S$. This is because we will take $b_A \coloneq \dim \left( \sum_{j \in A} Q_j \right)$ for a given initial collection of polytopes $Q_1, \ldots, Q_N$.  \\

	\begin{prop} \textbf{($N = 3$ case and equality cases of the submodularity inequality) \\} \label{p3algsubmodex}
		
		\vspace{-3mm}
		Suppose that $b_{[3]}$ is odd. Fix a permutation $\sigma \in S_3$. \\
		
		We will assume that $b_{[3]} - b_{[3] \setminus \ell}$ is even for all $\ell \in [3]$. Otherwise, we can set $a_\ell = b_{[3]} - b_{[3] \setminus \ell}$ and use Proposition \ref{minbaselift} to reduce to the 2 variable case (which was covered in Proposition \ref{p2mvodd} and Corollary \ref{p2conorm}). Note that it would suffice to satisfy the condition $\sum_{j \in [3] \setminus \ell} a_j < b_{[3] \setminus \ell}$ in that setting. 
		\vspace{0.5mm}
		\begin{enumerate}
			\item If $b_{\sigma(1)}$ is even, the odd tuple algorithm based on successively choosing minimal elements after the first one above produces the point $(a_{\sigma(1)}, a_{\sigma(2)}, a_{\sigma(3)}) = (b_{\sigma(1)} - 1, b_{[3] \setminus \sigma(3)} - b_{\sigma(1)}, b_{[3]} - b_{[3] \setminus \sigma(3)} + 1)$. This point is compatible with the nonzero mixed volume inequalities if and only if $b_{[3]} - b_{\sigma(2), \sigma(3)} = b_{[3]} - b_{[3] \setminus \sigma(1)} \le b_{\sigma(1)} - 1$ and $b_{[3]} - b_{[3] \setminus \sigma(3)} \le b_{\sigma(3)} - 1$. These show that certain specializations of the submodularity inequality (see version on Proposition \ref{minbaselift}) are \emph{strict} inequalities. \\
			
			\item If $b_{\sigma(1)}$ is odd, it produces the point $(a_{\sigma(1)}, a_{\sigma(2)}, a_{\sigma(3)}) = (b_{\sigma(1)}, b_{[3] \setminus \sigma(3)} - b_{\sigma(1)} - 1, b_{[3]} - b_{[3] \setminus \sigma(3)} + 1)$. This point is compatible with the nonzero mixed volume inequalities if and only if $b_{[3]} - b_{\sigma(2), \sigma(3)} = b_{[3]} - b_{[3] \setminus \sigma(1)} \le b_{\sigma(1)} - 1$ and $b_{[3]} - b_{[3] \setminus \sigma(3)} \le b_{\sigma(3)} - 1$, which are also state that certain specializations of the submodularity inequality (version on Proposition \ref{minbaselift}) are \emph{strict} inequalities. \\
			
			Note that a stronger condition is required in order to produce a valid point producing a nonzero mixed volume when $b_{\sigma(1)}$ is odd than in the case where $b_{\sigma(1)}$ is even since $b_{\sigma(3)} \ge b_{\sigma(1), \sigma(3)} - b_{\sigma(1)}$ by the submodularity inequality. This means that a smaller upper bound when $b_{\sigma(1)}$ is odd. \\
		\end{enumerate}
		
		In particular, the statements above show that the instances where the algorithm fails to produce an odd tuple are those where the submodularity inequality is an equality for certain specializations. This is one similarity to the $p = 2$ case discussed in Proposition \ref{p2mvodd} and Corollary \ref{p2conorm}. The geometry related to these equality cases is also discussed in Example \ref{submodineqeqgeom}. \\
		
	\end{prop}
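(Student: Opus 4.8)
The plan is to work entirely on the hyperplane $\sum_{j \in [3]} a_j = b_{[3]}$ and to test the explicit output point against the six facet inequalities $\sum_{j \in A} a_j \le b_A$ ranging over the nonempty proper subsets $A \subset [3]$. By the reformulation in Remark \ref{initmvgenperm}, on this hyperplane each lower bound $\sum_{j \in A} a_j \ge b_{[3]} - b_{[3] \setminus A}$ is equivalent to the upper bound for the complementary set, so these six upper bounds are exactly the conditions to verify. First I would record the parity input: since $b_{[3]}$ is odd and $b_{[3]} - b_{[3] \setminus \ell}$ is assumed even for every $\ell$, each two-element value $b_{[3] \setminus \ell}$ is odd. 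This is the bookkeeping that drives everything.

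Next I would identify the stated output with the order-$\sigma$ greedy (Edmonds) vertex $v = (b_{\sigma(1)},\, b_{[3] \setminus \sigma(3)} - b_{\sigma(1)},\, b_{[3]} - b_{[3] \setminus \sigma(3)})$ of the base polyhedron, adjusted by a single unit of parity correction. Using the parities above, the second and third coordinates of $v$ are determined up to parity, and the correction decreases the first even coordinate by $1$ while increasing the third coordinate by $1$; when $b_{\sigma(1)}$ is even this shifts $v_{\sigma(1)} \to v_{\sigma(1)} - 1$ (Part 1), and when $b_{\sigma(1)}$ is odd it shifts $v_{\sigma(2)} \to v_{\sigma(2)} - 1$ (Part 2). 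A direct comparison of formulas shows this reproduces the two stated points, simultaneously confirming that all three coordinates are odd and that the sum is preserved at $b_{[3]}$.

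The core is then a facet-by-facet computation. For Part 1 the singleton facet at $\sigma(1)$ and the pair facet at $\{\sigma(1),\sigma(2)\}$ are slack by construction; the singleton facet at $\sigma(2)$ reduces to the subadditivity bound $b_{\{\sigma(1),\sigma(2)\}} \le b_{\sigma(1)} + b_{\sigma(2)}$; and the pair facet at $\{\sigma(1),\sigma(3)\}$ reduces to the submodularity relation $b_{\{\sigma(1),\sigma(2)\}} + b_{\{\sigma(1),\sigma(3)\}} \ge b_{[3]} + b_{\sigma(1)}$. All four hold unconditionally, and the remaining two facets, the singleton at $\sigma(3)$ and the pair at $\{\sigma(2),\sigma(3)\}$, collapse exactly to $b_{[3]} - b_{[3] \setminus \sigma(3)} \le b_{\sigma(3)} - 1$ and $b_{[3]} - b_{[3] \setminus \sigma(1)} \le b_{\sigma(1)} - 1$. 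This gives the stated equivalence for Part 1.

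The main obstacle is Part 2, and it is precisely the phenomenon flagged in the note after the statement. Because the correcting unit is now removed from $a_{\sigma(2)}$ rather than from $a_{\sigma(1)}$, the first coordinate stays at its maximum $b_{\sigma(1)}$, so the pair facet at $\{\sigma(1),\sigma(3)\}$ becomes binding and forces $b_{[3]} - b_{[3] \setminus \sigma(3)} \le b_{\{\sigma(1),\sigma(3)\}} - b_{\sigma(1)} - 1$; via the submodularity bound $b_{\sigma(3)} \ge b_{\{\sigma(1),\sigma(3)\}} - b_{\sigma(1)}$ this is a strictly smaller upper bound than $b_{\sigma(3)} - 1$, which is exactly the \emph{stronger condition} the note refers to and what must actually be imposed. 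Granting it, the displayed conditions then follow: the $\{\sigma(2),\sigma(3)\}$ facet gives $b_{[3]} - b_{[3] \setminus \sigma(1)} \le b_{\sigma(1)}$, which upgrades to $\le b_{\sigma(1)} - 1$ because $b_{[3]} - b_{[3] \setminus \sigma(1)}$ is even while $b_{\sigma(1)}$ is odd, and the $\sigma(3)$ facet is implied by the strengthened inequality. Finally, for the closing assertion I would use integrality together with these parities to turn each binding inequality into the statement that a submodularity specialization is strict: the procedure fails to output a valid odd tuple exactly when one of these submodularity inequalities holds with equality, namely $b_{\sigma(1)} + b_{[3] \setminus \sigma(1)} = b_{[3]}$ or $b_{\sigma(3)} + b_{[3] \setminus \sigma(3)} = b_{[3]}$ in Part 1, and $b_{\{\sigma(1),\sigma(2)\}} + b_{\{\sigma(1),\sigma(3)\}} = b_{[3]} + b_{\sigma(1)}$ in Part 2, matching Example \ref{submodineqeqgeom}.
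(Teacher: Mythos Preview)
Your proposal is correct and follows essentially the same approach as the paper's own proof: a facet-by-facet verification of the six inequalities $\sum_{j \in A} a_j \le b_A$ for proper nonempty $A \subset [3]$, disposing of most of them via submodularity/subadditivity and isolating the two (or three) that yield the stated conditions. Your framing of the output as a parity-corrected greedy vertex is a pleasant conceptual wrapper, but the underlying computation matches the paper's proof line for line; in particular your handling of the $\{\sigma(1),\sigma(3)\}$ facet in Part 2---noting that it yields the strictly stronger bound $b_{[3]} - b_{[3]\setminus\sigma(3)} \le b_{\sigma(1),\sigma(3)} - b_{\sigma(1)} - 1$ and that the displayed $\sigma(3)$ condition then follows---is exactly what the paper does in its proof and what the note after Part 2 is gesturing at.
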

	
	\begin{proof}
		\begin{enumerate}
			\item We note that the $a_{\sigma(1)} + a_{\sigma(3)} \le b_{\sigma(1), \sigma(3)}$ follows from the submodularity inequality $b_S + b_T \ge b_{S \cup T} + b_{S \cap T}$ (Theorem 3.9 on p. 1993 of \cite{CL}) with $S = \{ \sigma(1), \sigma(3) \}$ and $T = \{ \sigma(1), \sigma(2) \}$. Also, the submodularity inequality with $S = \{ \sigma(1) \}$ and $T = \{ \sigma(2) \}$ implies that $a_{\sigma(2)} = b_{[3] \setminus \sigma(3)} - b_{\sigma(1)} = b_{\sigma(1), \sigma(2)} - b_{\sigma(1)} \le b_{\sigma(2)}$. Apart from this, we have that $a_{\sigma(1)} = b_{\sigma(1)} - 1 < b_{\sigma(1)}$, $a_{\sigma(1)} + a_{\sigma(2)} = b_{\sigma(1), \sigma(2)} - 1 < b_{\sigma(1), \sigma(2)}$, and $a_{\sigma(1)} + a_{\sigma(2)} + a_{\sigma(3)} = b_{[3]}$. It remains to check when it is possible to have $a_{\sigma(2)} + a_{\sigma(3)} \le b_{\sigma(1), \sigma(3)}$ and $a_{\sigma(3)} \le b_{\sigma(3)}$. In other words, we need $b_{[3]} - b_{\sigma(1)} + 1 \le b_{\sigma(2), \sigma(3)}$ and $b_{[3]} - b_{[3] \setminus \sigma(3)} + 1 \le b_{\sigma(3)}$. These conditions are equivalent to $b_{[3]} - b_{\sigma(2), \sigma(3)} = b_{[3]} - b_{[3] \setminus \sigma(1)} \le b_{\sigma(1)} - 1$ and $b_{[3]} - b_{[3] \setminus \sigma(3)} \le b_{\sigma(3)} - 1$, which state that the version of the submodularity inequality  of the form $b_A - b_{A \setminus k} \le b_C - b_{C \setminus k}$ for $C \subset A$ and $k \in C$ (discussed in Proposition \ref{minbaselift}) is a \emph{strict} inequality for the sets used. \\
			
			\item The reasoning is similar to that of Part 1. For singletons, the only inequality that does not follow automatically from a direct comparison of values or the submodularity inequality is $a_{\sigma(3)} \le b_{\sigma(3)}$, which is the condition $b_{[3]} - b_{[3] \setminus \sigma(3)} \le b_{\sigma(3)} - 1$ from above. Since $a_{\sigma(1)} + a_{\sigma(2)} = b_{[3]} - b_{\sigma(1)}$, the submodularity inequality $b_S + b_T \ge b_{S \cup T} + b_{S \cap T}$ with $S = \{ \sigma(1) \}$ and $T = [3] \setminus \sigma(1)$ implies that $a_{\sigma(1)} + a_{\sigma(2)} \le b_{\sigma(1), \sigma(2)}$. We also have $a_{\sigma(1)} + a_{\sigma(2)} = b_{[3] \setminus \sigma(1)} - 1 = b_{\sigma(1), \sigma(2)} - 1 < b_{\sigma(1), \sigma(2)}$. However, satisfying the inequality $a_{\sigma(1)} + a_{\sigma(3)} = b_{\sigma(1)}  + b_{[3]} - b_{[3] \setminus \sigma(3)} + 1 \le b_{\sigma(1), \sigma(3)}$ requires a submodularity inequality to be a \emph{strict} inequality since we need $b_{[3]} - b_{[3] \setminus \sigma(3)} \le b_{[3] \setminus \sigma(2)} - b_{\sigma(1)} - 1 = b_{\sigma(1), \sigma(3)} - b_{\sigma(1)} - 1$. \\
		\end{enumerate}
	\end{proof}

	For general $p$, the cases where the odd tuple modification of the algorithm in Remark \ref{optpov} fails to produce a valid odd tuple compatible with the nonzero mixed volume inequalities (i.e. staying in the generalized permutohedron) are those where certain specializations of the submodularity inequality attain equality as in the proof of the $p = 3$ case in Proposition \ref{p3algsubmodex}. In order to study when this occurs in general, we first recall that we can prove that the extreme point discussed in Remark \ref{optpov} (and motivated earlier by Proposition \ref{minbaselift}) is compatible with the nonzero mixed volume inequalities via repeated applications of the inequality $b_{\sigma([i_\ell])} - b_{\sigma([i_\ell - 1])} \le b_{\sigma(i_1), \ldots, \sigma(i_\ell)} - b_{\sigma(i_1), \ldots, \sigma(i_{\ell - 1})}$ (which itself is a specialization of the submodularity inequality discussed in Proposition \ref{minbaselift}). \\

	We now describe our main strategy for analyzing higher $p$ identities. \\
	
	\begin{rem} \textbf{(Higher $p$ identities and running totals) \\}
		\label{runtot}
		To measure when the output of the algorithm fails to yield an odd tuple that actually satisfies the nonzero mixed volume inequalities, we will use a ``running total'' comparison applied to the indices selected for the subset $A \subset [N]$ under consideration (with elements listed in the order determined by the permutation $\pi \in S_N$). Suppose that we have $A = \{ \pi(q_1), \ldots, \pi(q_m) \}$ with $q_1 < \cdots < q_m$. We have an initial difference between the upper bound and lower bound given by $b_{\pi(q_1)} - a_{\pi(q_1)}$. This serves as a ``cushion'' for the rest of the term comparisons. In general, we can measure the \emph{change} in the difference between the desired upper bound and the current total sum of the $a_{\pi(q_j)}$ by taking the difference between the change in the upper bound and the change in the sum. \\
	\end{rem}

	The algorithm below must fail to yield a valid output respecting the nonzero mixed volume inequalities if an odd tuple does \emph{not} exist. Its output is listed below and is an odd tuple analogue of the optimal points on generalized permutohedra discussed above. \\
	
	\begin{prop} \textbf{(Odd tuple algorithm output and associated parity conditions) \\} \label{oddalgoutput}
		Let $b_A$ for $A \subset [N]$ be a collection of positive integers satisfying the submodularity inequalities $b_S + b_T \ge b_{S \cup T} + b_{S \cap T}$. Suppose that $a_1 + \ldots + a_N = b_{[N]}$, $b_A \ge |A|$ for all $A \subset [N]$, and $b_R \le b_S$ for all $R \subset S$. \\
		
		For each permutation $\pi \in S_N$, the output of the odd tuple algorithm consists of ``blocks'' of terms subdividing $\pi(1), \ldots, \pi(N)$ with each block starting with indices $\pi(w_i)$ and any other element of the same block being set equal to 1 (i.e. $a_{\pi(q)} = 1$ if it is such an element). These elements where 1 is assigned by default are those where the ``running sum'' of the variables just before removal is less than or equal to $b_{[N] \setminus \pi([q])}$. Each of the ``transition points'' starting these blocks have values 
		
		\begin{align*}
			\hspace{-20mm} a_{\pi(w_i)} &= T_{w_i} - T_{w_i + 1} \\
			&= (T_{w_{i - 1} + 1} - (w_i - w_{i - 1} - 1)) - \begin{cases} 
				b_{[N] \setminus \pi([w_i])} & \text{ if } b_{[N] \setminus \pi([w_i])} \text{ has parity } N - w_i \\
				b_{[N] \setminus \pi([w_i])} - 1 & \text{ if } b_{[N] \setminus \pi([w_i])} \text{ has parity } N - w_i - 1
			\end{cases} \\
			&= \begin{cases} 
				b_{[N] \setminus \pi([w_{i - 1}])} - b_{[N] \setminus \pi([w_i])} & \text{ if } b_{[N] \setminus \pi([w_{i - 1}])} \text{ and } b_{[N] \setminus \pi([w_i])}  \\
				& \text{ have parities }  N - w_{i - 1} \text{ and } N - w_i  \\
				& \text{ or } N - w_{i - 1} - 1 \text{ and } N - w_i - 1 \\
				b_{[N] \setminus \pi([w_{i - 1}])} - b_{[N] \setminus \pi([w_i])} - 1 & \text{ if } b_{[N] \setminus \pi([w_{i - 1}])} \text{ and } b_{[N] \setminus \pi([w_i])} \\ 
				&\text{ have parities } N - w_{i - 1} - 1 \text{ and } N - w_i \\
				b_{[N] \setminus \pi([w_{i - 1}])} - b_{[N] \setminus \pi([w_i])} + 1 & \text{ if } b_{[N] \setminus \pi([w_{i - 1}])} \text{ and } b_{[N] \setminus \pi([w_i])} \\ 
				&\text{ have parities } N - w_{i - 1} \text{ and } N - w_i - 1 
			\end{cases} \\
			&- (w_i - w_{i - 1} - 1).
		\end{align*}
		By  ``$\alpha$ has parity $\beta$'', we mean that $\alpha$ and $\beta$ have the same parity. \\
	\end{prop}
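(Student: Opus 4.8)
The plan is to run the algorithm explicitly while tracking a single running statistic, the tail sum \( T_q := \sum_{j \ge q} a_{\pi(j)} \), so that \( T_1 = b_{[N]} \), \( T_{N+1} = 0 \), and \( a_{\pi(q)} = T_q - T_{q+1} \) for every \( q \). Converting the greedy rule into a statement about \( T_{q+1} \) is the conceptual heart: choosing \( a_{\pi(q)} \) as small as possible while keeping the tuple odd and completable is the same as making the remaining budget \( T_{q+1} \) as large as possible. Two constraints bound \( T_{q+1} \) from above. First, the nonzero mixed volume inequality of Theorem \ref{expmixvol} applied to the tail set \( [N] \setminus \pi([q]) = \{\pi(q+1), \ldots, \pi(N)\} \) reads \( T_{q+1} = \sum_{k \in [N] \setminus \pi([q])} a_k \le b_{[N] \setminus \pi([q])} \). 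Second, \( T_{q+1} \) is a sum of \( N - q \) odd integers, so it is forced to satisfy \( T_{q+1} \equiv N - q \pmod 2 \). Hence the greedy value is \( T_{q+1} = \) the largest integer \( \le b_{[N] \setminus \pi([q])} \) of parity \( N - q \), which is \( b_{[N] \setminus \pi([q])} \) or \( b_{[N] \setminus \pi([q])} - 1 \) according to its parity, exactly the case split appearing in the statement.

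First I would establish that the tail inequality is genuinely the binding upper constraint on \( T_{q+1} \), i.e.\ that once \( T_{q+1} \) is set to this parity-rounded cap the partial assignment still completes to a valid odd tuple. This is the point where submodularity enters, through the specialization \( b_A - b_{A \setminus k} \ge b_{[N]} - b_{[N] \setminus k} \) of Proposition \ref{minbaselift} (and its relatives in Remark \ref{permnot}); the monotone-marginal property guarantees that no inequality for a smaller subset of the tail is tighter, so the greedy choice never blocks completion. The hypotheses \( b_A \ge |A| \) and \( b_R \le b_S \) for \( R \subset S \) are used here to guarantee both that a positive odd value is always available at each step and that \( T_{q+1} \ge N - q \), so that the tail can in fact be filled by \( N - q \) positive odd coordinates. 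This is essentially the polymatroid greedy argument recalled in Remark \ref{optpov}, modified to respect the parity lattice.

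With the per-step rule in hand, the block structure follows by bookkeeping on \( T_q \). A position \( q \) is a default-\(1\) position precisely when the cap does not bind, i.e.\ when \( T_q - 1 \le b_{[N] \setminus \pi([q])} \): since \( T_q \equiv N - q + 1 \), the value \( T_q - 1 \) already has the correct parity \( N - q \), so the greedy sets \( T_{q+1} = T_q - 1 \) and \( a_{\pi(q)} = 1 \). The transition points \( w_i \) are the positions where the cap binds. Within a block, the run of default-\(1\) positions \( w_{i-1}+1, \ldots, w_i - 1 \) decreases \( T \) by one at each step, giving the telescoped identity \( T_{w_i} = T_{w_{i-1}+1} - (w_i - w_{i-1} - 1) \); this is the middle line of the display, and subtracting the parity-rounded cap \( T_{w_i+1} \) yields \( a_{\pi(w_i)} = T_{w_i} - T_{w_i+1} \). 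The final four-case closed form is then obtained by substituting the parity-rounded values of \( T_{w_{i-1}+1} \) (the cap at the previous transition, parity \( N - w_{i-1} \)) and of \( T_{w_i+1} \) (the cap at the current transition, parity \( N - w_i \)): the four combinations of ``cap already has the right parity'' versus ``cap must be decreased by one'' produce the \( 0 \), \( -1 \), and \( +1 \) corrections in the three written cases.

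The step I expect to be the main obstacle is the completability claim of the second paragraph --- verifying that maximizing \( T_{q+1} \) against the single tail inequality, rather than against all \( 2^{N-q} \) tail inequalities simultaneously, never produces an uncompletable prefix. Making this rigorous requires the full submodular exchange argument (an induction on \( N - q \) using the inequality \( b_{\pi([i_\ell])} - b_{\pi([i_\ell - 1])} \le b_{\{\pi(i_1),\ldots,\pi(i_\ell)\}} - b_{\{\pi(i_1),\ldots,\pi(i_{\ell-1})\}} \) recalled just before Remark \ref{runtot}), now carried out on the parity-shifted lattice so that the \( \pm 1 \) corrections do not accumulate across blocks and break feasibility. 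Everything else is telescoping and parity bookkeeping, which is routine once the binding-constraint characterization is secured.
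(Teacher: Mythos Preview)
Your approach is essentially the same as the paper's: both track the tail sum $T_q$, identify the greedy step as maximizing $T_{q+1}$ subject to the tail cap $b_{[N]\setminus\pi([q])}$ and the parity constraint $T_{q+1}\equiv N-q\pmod 2$, and then telescope through the default-$1$ runs to obtain the transition formulas. The paper packages the cap slightly differently, introducing the truncated function $c_A:=\min(T_q,b_A)$ and reading off the three-case formula for $c_A-c_{A\setminus k}$, but this is the same content as your direct ``largest integer $\le b_{[N]\setminus\pi([q])}$ of the right parity'' description.

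One point worth noting: your second and fourth paragraphs, on completability and the submodular exchange argument, go beyond what this proposition asserts. The paper's proof here is purely a computation of what the algorithm outputs; the question of whether that output actually satisfies all $2^N$ nonzero-mixed-volume inequalities is deferred to the following Corollary~\ref{oddtupzerovoltot}, where the running-total comparison against arbitrary subsets $S\subset[N]$ is carried out. So for this proposition you need only the bookkeeping, not the feasibility verification you flag as the main obstacle.
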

	
	\begin{rem} \textbf{(Permutation choice compared to optimization context) \\}
		In comparison to permutations $\sigma \in S_N$ with $\sigma([k]) = S_i$ in the Corollary 3.3 on p. 313 -- 314 of \cite{FT}, we have that $\pi(a) = \sigma(N + 1 - a)$ and $[N] \setminus \pi([q]) = \sigma([N - q])$. In other words, it lists the elements of $[N]$ in reverse order compared to $\sigma$. We choose to use $\pi$ since it seems to be easier to think about remaining terms and what are ``minimal possible values'' of coordinates compatible with the nonzero mixed volume inequalities among those lying on that particular hyperplane (for the sum of the remaining variables) after a value is chosen (which was subtracted from the previous total sum). \\
	\end{rem}
	
	\begin{proof}
		
		Fix a permutation $\pi\in S_N$. \\

		We will start with Step 0. Let $T_1 = b_{[N]}$. After Step $q - 1$, we will set $T_q \coloneq \sum_{j \in [N] \setminus \pi([q - 1])} a_j$. The general idea is to try to choose $a_{\pi(q)}$ to be the minimal possible value compatible with the nonzero mixed volume relations among those on the given sum $T_q$ (i.e. just before $a_{\pi(q)}$ is removed) while keeping coordinates nonnegative and assigning odd values to them. \\ 
		
		In order to keep terms nonnegative, we will consider $c_A \coloneq \min(C, b_A)$. We will take $C = T_q$ at the start of Step $q$ in our setting (i.e. in the setup above). Note that the submodularity inequalities $c_S + c_T \ge c_{S \cup T} + c_{S \cap T}$ follow from $b_S + b_T \ge b_{S \cup T} + b_{S \cap T}$ for $b_A$ with $A \subset [N]$ since we assumed that $R \subset S$ implies $b_R \le b_S$. Since this will not be used for the remainder of the proof, we will state its proof separately after this one (Remark \ref{minsubmod}). \\

		In addition, we have that 
		
		\[ c_A - c_{A \setminus k} = \begin{cases} 
			b_A - b_{A \setminus k} & \text{ if $C \ge b_A$} \\
			C - b_{A \setminus k} & \text{ if $b_{A \setminus k} \le C \le b_A$} \\
			0 & \text{ if $C \le b_{A \setminus k}$}
		\end{cases} \]
		
		This means that we will take $a_{\pi(q)} = 1$ if $T_q \le b_{[N] \setminus \pi([q])}$. In such cases, we have that $T_{q + 1} = T_q - 1$. Here, our ``ground set'' will be $A = [N] \setminus \pi([q - 1])$. \\ 
		
		We also have that 
		
		\[ \hspace{-20mm} T_{q + 1} = \begin{cases} 
			T_q - (T_q - \min(T_q, b_{[N] \setminus \pi([q]) })) = \min(T_q, b_{[N] \setminus \pi([q]) }) & \text{ if } T_q - \min(T_q, b_{[N] \setminus \pi([q]) }) \\
			&\text{ is odd} \\
			T_q - (T_q - \min(T_q, b_{[N] \setminus \pi([q]) }) + 1) = \min(T_q, b_{[N] \setminus \pi([q]) }) - 1 & \text{ if } T_q - \min(T_q, b_{[N] \setminus \pi([q]) }) \\ 
			&\text{ is even.}
		\end{cases} \]

		As mentioned above, we are trying to subtract the minimal value of $a_{\pi(q)}$ possible to form $T_{q + 1}$ from $T_q$ while keeping terms nonnegative. This was the reason for subtracting $\min(T_q, b_{[N] \setminus \pi([q])})$ from $T_q$ and adding 1 if necessary to make parity modifications. \\
		
		\color{black}

		In order to consider the indices yielding the case with $c_A - c_{A \setminus k} = 0$ separately, we will study where the values of $\pi([N] \setminus \pi([q]))$ strictly decrease or stay the same as we increase $q$ (i.e. omit more elements). Among the inputs in $[N]$, we will denote these ``transition steps'' $w_1 + 1, \ldots, w_\ell + 1$ where $T_{w_i} > b_{[N] \setminus \pi([w_i])}$. Such inputs yield the first element of a ``block'' consisting of a point that is \emph{not} assigned 1 by default followed by ones that are (if they exist). Given $0 \le m \le w_i - w_{i - 1} - 1$, we have that $T_{w_{i - 1} + 1 + m} = T_{w_{i - 1} + 1} - m$. For these transition points, we have 
		
		\[ T_{w_i + 1} = \begin{cases} 
			b_{[N] \setminus \pi([w_i])} & \text{ if } b_{[N] \setminus \pi([w_i])} \text{ has parity } N - w_i \\
			b_{[N] \setminus \pi([w_i])} - 1 & \text{ if } b_{[N] \setminus \pi([w_i])} \text{ has parity } N - w_i - 1.
		\end{cases} \]   
		
		\vspace{-2mm}
		
		This implies that
		\begin{align*}
			\hspace{-15mm} a_{\pi(w_i)} &= T_{w_i} - T_{w_i + 1} \\
			&= (T_{w_{i - 1} + 1} - (w_i - w_{i - 1} - 1)) - \begin{cases} 
				b_{[N] \setminus \pi([w_i])} & \text{ if } b_{[N] \setminus \pi([w_i])} \text{ has parity } N - w_i \\
				b_{[N] \setminus \pi([w_i])} - 1 & \text{ if } b_{[N] \setminus \pi([w_i])} \text{ has parity } N - w_i - 1
			\end{cases} \\
			&= \begin{cases} 
				b_{[N] \setminus \pi([w_{i - 1}])} - b_{[N] \setminus \pi([w_i])} & \text{ if } b_{[N] \setminus \pi([w_{i - 1}])} \text{ and } b_{[N] \setminus \pi([w_i])} \\ 
				&\text{ have parities } N - w_{i - 1} \text{ and } N - w_i  \\
				& \text{ or } N - w_{i - 1} - 1 \text{ and } N - w_i - 1 \\
				b_{[N] \setminus \pi([w_{i - 1}])} - b_{[N] \setminus \pi([w_i])} - 1 & \text{ if } b_{[N] \setminus \pi([w_{i - 1}])} \text{ and } b_{[N] \setminus \pi([w_i])} \\ 
				&\text{ have parities } N - w_{i - 1} - 1 \text{ and } N - w_i \\
				b_{[N] \setminus \pi([w_{i - 1}])} - b_{[N] \setminus \pi([w_i])} + 1 & \text{ if } b_{[N] \setminus \pi([w_{i - 1}])} \text{ and } b_{[N] \setminus \pi([w_i])} \\ 
				&\text{ have parities } N - w_{i - 1} \text{ and } N - w_i - 1 
			\end{cases} \\
			&- (w_i - w_{i - 1} - 1).
		\end{align*}

	\end{proof}

	\begin{rem} \textbf{(Taking minima and preserving submodularity) \\} \label{minsubmod}
		
		We will write down the claim from the proof of Proposition \ref{oddalgoutput} that $c_A \coloneq \min(C, b_A)$ produces terms satisfying the submodularity inequality $c_S + c_T \ge c_{S \cup T} + c_{S \cap T}$ if $b_S + b_T \ge b_{S \cup T} + b_{S \cap T}$ and $R \subset S$ implies that $b_R \le b_S$. Note that this uses the assumption that $R \subset S$ implies $b_R \le b_S$ and involves casework based on the size of $C$ relative to the terms of the inequality. Without loss of generality, suppose that $b_S \le b_T$. It is clear that the inequality $c_S + c_T \ge c_{S \cup T} + c_{S \cap T}$ is satisfied if $C \le b_{S \cap T}$ or $C \ge b_{S \cup T}$ since both sides are equal to $2C$ in the former case and the inequality is the original submodularity inequality $b_S + b_T \ge b_{S \cup T} + b_{S \cap T}$ in the latter case. Suppose that $b_{S \cap T} \le C \le b_S$. Then, the inequality $c_S + c_T \ge c_{S \cup T} + c_{S \cap T}$ simplifies to $C \ge b_{S \cap T}$, which is true by our assumption. If $b_S \le C \le b_T$, the inequality $c_S + c_T \ge c_{S \cup T} + c_{S \cap T}$ reduces to $b_S \ge b_{S \cap T}$. This follows from our assumption that $R \subset S$ implies $b_R \le b_S$. Finally, we consider the case where $b_T \le C \le b_{S \cup T}$. The inequality $c_S + c_T \ge c_{S \cup T} + c_{S \cap T}$ reduces to $b_S + b_T \ge C + b_{S \cup T}$. This is actually implied by the original submodularity inequality since $b_S + b_T \ge b_{S \cup T} + b_{S \cap T}$ and we assumed that $C \le b_{S \cup T}$. \\
	\end{rem}

	\color{black}

	We can compare the ``transition points'' from the algorithm in Proposition \ref{oddalgoutput} to the entries of the ``usual'' extreme points of the submodular function referred to in Remark \ref{optpov} and Proposition \ref{minbaselift}.  \\

	\begin{cor} \textbf{(Comparison between odd tuple algorithm output and ``usual'' optimal/extreme points) \\} \label{oddalgcomp}
		In Proposition \ref{oddalgoutput}, we have that \[ a_{\pi(w_i)} - \alpha_i \le b_{[N] \setminus \pi([w_i - 1])} - b_{[N] \setminus \pi([w_i])}, \] where
		
		\[ \alpha_i = \begin{cases} 
			0 & \text{ if } b_{[N] \setminus \pi([w_{i - 1}])}  \text{ has parity } N - w_{i - 1}   \\
			1 & \text{ if } b_{[N] \setminus \pi([w_{i - 1}])}  \text{ has parity } N - w_{i - 1} - 1.
		\end{cases}  \]
		
	\end{cor}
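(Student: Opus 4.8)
The plan is to start from the closed form for the transition value $a_{\pi(w_i)}$ established in Proposition \ref{oddalgoutput} and to compare it with the ``usual'' greedy increment term by term. Write $\beta_q := b_{[N]\setminus\pi([q])}$, let $g_q := b_{[N]\setminus\pi([q-1])} - b_{[N]\setminus\pi([q])}$ denote the coordinate that the greedy/extreme-point rule of Remark \ref{optpov} and Proposition \ref{minbaselift} assigns to $\pi(q)$, and let $\epsilon_i\in\{0,1\}$ be the parity correction recorded in Proposition \ref{oddalgoutput}, so that $\epsilon_i = 0$ when $\beta_{w_i}$ has parity $N-w_i$ and $\epsilon_i = 1$ when it has parity $N-w_i-1$. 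The right-hand side of the claimed inequality is exactly $g_{w_i}$. Using $T_{w_{i-1}+1} = \beta_{w_{i-1}} - \epsilon_{i-1}$, $T_{w_i+1} = \beta_{w_i} - \epsilon_i$, and the fact that the $K := w_i - w_{i-1} - 1$ intermediate coordinates are each set to $1$, I would rewrite $a_{\pi(w_i)} = T_{w_i} - T_{w_i+1}$ as $\beta_{w_{i-1}} - \beta_{w_i} + (\epsilon_i - \epsilon_{i-1}) - K$. Subtracting $g_{w_i}$ and telescoping the greedy increments along the block gives the clean identity $a_{\pi(w_i)} - g_{w_i} = \left(\sum_{m=w_{i-1}+1}^{w_i-1} g_m\right) + (\epsilon_i - \epsilon_{i-1}) - K$, so the whole problem reduces to bounding the block sum $\sum_m g_m$ from above.

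The key estimate comes from the defining property of a ``set-to-$1$'' element: each such $m$ satisfies $T_m \le \beta_m$, and since these coordinates contribute $1$ apiece, the running total obeys $T_m = T_{w_{i-1}+1} - (m - w_{i-1} - 1) = \beta_{w_{i-1}} - \epsilon_{i-1} - (m - w_{i-1} - 1)$. Substituting this and telescoping $\beta_{w_{i-1}} - b_{[N]\setminus\pi([m])} = \sum_{j=w_{i-1}+1}^m g_j$ converts each block inequality into $\sum_{j=w_{i-1}+1}^m g_j \le \epsilon_{i-1} + (m - w_{i-1} - 1)$; taking $m = w_i - 1$ yields $\sum_{m=w_{i-1}+1}^{w_i-1} g_m \le K - 1 + \epsilon_{i-1}$. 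Feeding this into the identity of the first paragraph gives $a_{\pi(w_i)} - g_{w_i} \le \epsilon_i - 1 \le 0$ as soon as $K \ge 1$, which already proves the claim with room to spare whenever at least one block element is present. This step uses only monotonicity $b_R \le b_S$ for $R\subset S$ and the submodular telescoping, both available from Remark \ref{permnot} and Proposition \ref{minbaselift}.

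The remaining and most delicate case is $K = 0$, i.e.\ two consecutive transition points with no intervening $1$'s, where the block sum is empty and the identity collapses to $a_{\pi(w_i)} - g_{w_i} = \epsilon_i - \epsilon_{i-1}$; here the estimate is governed purely by the parity corrections, which is precisely what the term $\alpha_i$ is there to absorb. The tool I would use is the parity invariant $T_q \equiv N - q + 1 \pmod 2$ (the remaining coordinates are $N - q + 1$ odd numbers), which pins down each $\epsilon$ from the congruence class of $\beta_{w_i}$ relative to $N - w_i$ and shows that $a_{\pi(w_i)} - g_{w_i} \le \epsilon_i$ holds in every case. I expect this consecutive-transition bookkeeping to be the main obstacle: it is the only place where the crude bound $a_{\pi(w_i)} \le g_{w_i}$ can fail by exactly one unit of parity rounding, and verifying that the parity term $\alpha_i$ is attached to the index that actually controls that off-by-one (the value of $\beta$ at the current transition $w_i$) is the subtle point of the argument.
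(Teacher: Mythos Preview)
Your $K\ge 1$ argument is correct and is essentially the paper's proof unwound: both use that the last non-transition index $w_i-1$ in the block satisfies $T_{w_i-1}\le b_{[N]\setminus\pi([w_i-1])}$, which after subtracting one more unit gives $T_{w_i}\le b_{[N]\setminus\pi([w_i-1])}-1$ and hence $a_{\pi(w_i)}\le g_{w_i}$. Your telescoped identity
\[
a_{\pi(w_i)}-g_{w_i}=\Bigl(\sum_{m=w_{i-1}+1}^{w_i-1} g_m\Bigr)+(\epsilon_i-\epsilon_{i-1})-K
\]
is a clean way to package this.

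The gap is in the $K=0$ paragraph, and it stems from a misreading of $\alpha_i$. In the statement, $\alpha_i$ is the parity correction at the \emph{previous} transition $w_{i-1}$, so in your notation $\alpha_i=\epsilon_{i-1}$, not $\epsilon_i$. (Look again: the cases for $\alpha_i$ are stated in terms of the parity of $b_{[N]\setminus\pi([w_{i-1}])}$ relative to $N-w_{i-1}$.) Your own identity at $K=0$ reads $a_{\pi(w_i)}-g_{w_i}=\epsilon_i-\epsilon_{i-1}$, so the inequality to establish is $\epsilon_i-\epsilon_{i-1}\le \epsilon_{i-1}$, i.e.\ $\epsilon_i\le 2\epsilon_{i-1}$, whereas what you actually prove is $\epsilon_i-\epsilon_{i-1}\le \epsilon_i$. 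These coincide only when $\epsilon_{i-1}=1$; when $\epsilon_{i-1}=0$ you would still need $\epsilon_i=0$, and your parity-invariant remark $T_q\equiv N-q+1\pmod 2$ does not by itself force that. Your closing sentence (``the parity term $\alpha_i$ is attached to \ldots\ the current transition $w_i$'') confirms the misidentification. Once you correct $\alpha_i=\epsilon_{i-1}$, the $K=0$ case has to be argued differently---e.g.\ by going back to $T_{w_i}=T_{w_{i-1}+1}=b_{[N]\setminus\pi([w_{i-1}])}-\epsilon_{i-1}=b_{[N]\setminus\pi([w_i-1])}-\alpha_i$ directly, as the paper does, rather than via the bound $\le\epsilon_i$.
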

	
	\begin{proof}
		
		Since subtracting $w_i - w_{i - 1} - 1$ involves indices that still lie within the block started by $w_{i - 1}$, we have that $T_{w_{i - 1} + 1} - (w_i - w_{i - 1} - 1) \le b_{[N] \setminus \pi([w_i - 1])}$. We can then apply
		\[ T_{w_{i - 1} + 1} = \begin{cases} 
			b_{[N] \setminus \pi([w_{i - 1}])} & \text{ if } b_{[N] \setminus \pi([w_{i - 1}])} \text{ has parity } N - w_{i - 1} \\
			b_{[N] \setminus \pi([w_{i - 1}])} - 1 & \text{ if } b_{[N] \setminus \pi([w_{i - 1}])} \text{ has parity } N - w_{i - 1} - 1
		\end{cases} \]   
		
		from the proof of Proposition \ref{oddalgoutput}. Note that \[ T_{w_{i - 1} + 1 + m_{i - 1}} = T_{w_{i - 1} + 1} - m_{i - 1} \le b_{[N] \setminus \pi([w_{i - 1} + 1 + m_{i - 1}])} \] for all $0 \le m_{i - 1} \le w_i - w_{i - 1} - 1$. Combining this with the parity conditions above gives the claimed bound. 
		
	\end{proof}
	
	We will now keep track of the ``running totals'' mentioned in Remark \ref{runtot}. This will yield conditions describing when the outputs of Proposition \ref{oddalgoutput} are (in)compatible with the nonzero mixed volume inequality for a particular subset $S \subset [N]$. \\

	\begin{cor} \textbf{(Running total classification and submodularity inequality equality cases) \\} \label{oddtupzerovoltot}
		\begin{enumerate}
			\item \textbf{(Comparing desired upper bounds with actual sums) \\}
			Consider a subset $S \subset [N]$. We will run through the elements of $S$ in increasing order of inputs into the given permutation $\pi \in S_N$. We can write $S = \{ \pi(q_1), \ldots, \pi(q_{|S|}) \}$ for some $q_1 > \cdots > q_{|S|}$. The point is to check if $\sum_{j \in S} a_j \le b_S$ for the output of Proposition \ref{oddalgoutput} and determine when the inequality fails while we run through the variables in this order if it does not. \\
			\begin{itemize}
				\item The \textbf{initial difference} between the upper bound and the sum (which is a single term) is either of the form $b_{\pi(j)} - 1$ or 
				\vspace{-1mm}
				\begin{align*}
					\hspace{5mm} b_{\pi(w_i)} - a_{\pi(w_i)} &= b_{\pi(w_i)} - (b_{[N] \setminus \pi([w_{i -1}])} - (w_i - w_{i - 1} - 1) - b_{[N] \setminus \pi([w_i])} + \mu_i ),
				\end{align*}

				where $\mu_i \in \{ 0, -1, 1 \}$ is the constant from Proposition \ref{oddalgoutput}. \\
				
				We note that the term we are subtracting is either bounded above by $b_{[N] \setminus \pi([w_i - 1])} - b_{[N] \setminus \pi([w_i])} + \mu_i$ or $b_{[N] \setminus \pi([w_i - 1])} - b_{[N] \setminus \pi([w_i])} + \mu_i + 1$ depending if $b_{[N] \setminus \pi([w_i])}$ has parity $N - w_i$ or $N - w_i - 1$. \\
				
				\item Let $A \subset S$ be the set of elements of $S$ we have run through so far. \\
				
				If we add 1 after this, the \textbf{change in the difference} between the desired upper bound and the actual (running) sum is $b_{\pi(A \cup j)} - b_{\pi(A)} - 1$. \\
				
				If we add a transition point $a_{\pi(w_i)}$ after this, the \textbf{change in the difference} between the desired upper bound and the actual (running) sum is 
				
				\begin{align*}
					\hspace{7mm} (b_{\pi(A \cup w_i)} - b_{\pi(A)}) - a_{\pi(w_i)}
					&= (b_{\pi(A \cup w_i)} - b_{\pi(A)}) - (b_{[N] \setminus \pi([w_i - 1])} - b_{[N] \setminus \pi([w_i])}) \\
					&+ (b_{[N] \setminus \pi([w_i - 1])} - b_{[N] \setminus \pi([w_i])}) - a_{\pi(w_i)}. \\
				\end{align*}

				Since $A \subset [N] \setminus \pi([w_i - 1])$, the difference on the first line is nonnegative by the submodularity inequality (version from proof of Proposition \ref{minbaselift}). This inclusion holds since are running through $S$ so that the inputs into $\pi$ are decreasing and we did not hit $\pi(w_i)$ yet after running through $A$. Then, the inclusion $A \subset [N] \setminus \pi([w_i - 1])$ holds since $[N] \setminus \pi([w_i - 1])$ consists of the elements $\pi(j)$ with $j \ge w_i$. \\
				
				We note that the difference on the second line was studied in Corollary \ref{oddalgcomp}. Recall that \[ a_{\pi(w_i)} \le b_{[N] \setminus \pi([w_i - 1])} - b_{[N] \setminus \pi([w_i])}\] or \[ a_{\pi(w_i)} - 1 \le b_{[N] \setminus \pi([w_i - 1])} - b_{[N] \setminus \pi([w_i])} , \] where we use $a_{\pi(w_i)}$ or $a_{\pi(w_i)} - 1$ on the left hand side of the inequality if $b_{[N] \setminus \pi([w_{i - 1}])}$ has parity $N - w_{i - 1}$ or $N - w_{i - 1} - 1$ respectively. \\

			\end{itemize}

			\item \textbf{(Possible terms yielding nonnegative and strictly negative changes in upper bound and sum differences) \\}
			
			\begin{itemize}
				\item \textbf{Nonnegative changes}
				\begin{itemize}
					\item If $a_{\pi(j)} = 1$ is added, instances where $b_{\pi(A \cup j)} - b_{\pi(A)} \ge 1$. \\
					
					\item If $a_{\pi(w_i)}$ is added, instances where $b_{[N] \setminus \pi([w_{i - 1}])}$ has parity $N - w_{i - 1}$. 
				\end{itemize}
				
				\item \textbf{Possible negative changes}
				\begin{itemize}
					\item If $a_{\pi(j)} = 1$ added, instances where $b_{\pi(A \cup j)} - b_{\pi(A)} = 0$ (i.e. $b_{\pi(A \cup j)} = b_{\pi(A)}$). This would contribute a change of $-1$ to the difference. \\

					\item If $a_{\pi(w_i)}$ is added, instances where $b_{[N] \setminus \pi([w_{i - 1}])}$ has parity $N - w_{i - 1} - 1$. If this is negative, this would contribute a change of $-1$ to the difference. Note that being negative would require $b_{[N] \setminus \pi([w_{i - 1}])} - (w_i - w_{i - 1} - 1) + 1 = b_{[N] \setminus \pi([w_{i - 1}])}$. \\
				\end{itemize}
			\end{itemize}
			
			\item \textbf{(Compatibility with nonzero mixed volume inequalities) \\}
			
			\noindent Combining the possible changes in differences between desired upper bounds and actual running sums (Part 2) with the initial differences (Part 1), the subset $S \subset [N]$ can induce a failure of the output of the odd tuple algorithm (Proposition \ref{oddalgoutput}) to satisfy the nonzero mixed volume inequalities only if the following condition hold: \\
			
			\begin{itemize}
				\item The number of terms satisfying the possible negative changes (Part 2) from the second term onwards is strictly larger than the sums of the following terms: \\
				\begin{itemize}
					\item $(b_{\pi(A \cup j)} - b_{\pi(A)} - 1)$ and differences of the form $(b_{\pi(A \cup w_i)} - b_{\pi(A)}) - (b_{[N] \setminus \pi([w_i - 1])} - b_{[N] \setminus \pi([w_i])})$ from the positive change terms from Part 2. Note that terms of the second type are differences of the two sides of a specialization of the submodularity inequality. \\
					
					\item The initial chosen difference $b_{\pi(j)} - 1$ or $b_{\pi(w_i)} - a_{\pi(w_i)}$ from Part 1. \\
				\end{itemize}

			\end{itemize}

		\end{enumerate}
	\end{cor}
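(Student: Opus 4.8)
The plan is to treat the whole statement as a single telescoping bookkeeping argument that tracks, for a fixed subset $S \subset [N]$, the ``slack'' $\delta(A) := b_A - \sum_{j \in A} a_j$ of the nonzero mixed volume inequality as $A$ grows from the first processed element of $S$ up to $S$ itself, the elements being added in the order dictated by the algorithm of Proposition \ref{oddalgoutput} (decreasing value of the $\pi$-input). The only thing that ultimately needs checking is $\sum_{j \in S} a_j \le b_S$, i.e. $\delta(S) \ge 0$; since every quantity in sight is an integer, a failure means $\delta(S) \le -1$. So all three parts are really statements about the increments $\delta(A \cup \{\pi(q)\}) - \delta(A)$ and their signs.

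First I would prove Part 1 by direct computation. The initial difference is $\delta(\{\pi(q_1)\}) = b_{\pi(q_1)} - a_{\pi(q_1)}$, where $\pi(q_1)$ is the first (largest $\pi$-input) element of $S$; this is $b_{\pi(q_1)} - 1$ when that element is a default ``$=1$'' filler and $b_{\pi(w_i)} - a_{\pi(w_i)}$ when it is a transition point, into which I substitute the closed form for $a_{\pi(w_i)}$ from Proposition \ref{oddalgoutput}. For a later element I compute the increment $\bigl(b_{A \cup \{\pi(q)\}} - b_A\bigr) - a_{\pi(q)}$, which equals $b_{A \cup \{\pi(q)\}} - b_A - 1$ for a filler and, for a transition point $\pi(w_i)$, is rewritten by adding and subtracting the global marginal $b_{[N] \setminus \pi([w_i-1])} - b_{[N] \setminus \pi([w_i])}$, exactly as in the two-line display of Part 1. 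The key structural input is that, because we process $S$ in decreasing $\pi$-input order and have not yet reached $\pi(w_i)$, the running set satisfies $A \subset [N] \setminus \pi([w_i-1])$ with $\pi(w_i) \notin A$, hence $A \subseteq [N] \setminus \pi([w_i])$; the diminishing-returns form of submodularity (the marginal $b_{X \cup k} - b_X$ is nonincreasing in $X$, the specialization recorded in Proposition \ref{minbaselift}) then forces the first grouped difference to be nonnegative, while the second grouped difference is precisely the quantity bounded in Corollary \ref{oddalgcomp}.

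Next I would read off Part 2 from this decomposition. For a filler term, monotonicity $b_{A \cup \{\pi(q)\}} \ge b_A$ gives an increment $\ge -1$, nonnegative exactly when $b_{A \cup \{\pi(q)\}} - b_A \ge 1$ and equal to $-1$ exactly when $b_{A \cup \{\pi(q)\}} = b_A$. For a transition term, combining the submodular nonnegativity of the first grouped difference with the bound $a_{\pi(w_i)} - \alpha_i \le b_{[N] \setminus \pi([w_i-1])} - b_{[N] \setminus \pi([w_i])}$ of Corollary \ref{oddalgcomp} shows the increment is $\ge -\alpha_i$; since $\alpha_i = 0$ when $b_{[N] \setminus \pi([w_{i-1}])}$ has parity $N - w_{i-1}$ and $\alpha_i = 1$ otherwise, the increment is nonnegative in the first parity case and at worst $-1$ in the second. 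This reproduces the two lists in Part 2, the point being that every negative increment costs exactly one unit. Finally, Part 3 is the telescoping conclusion: $\delta(S)$ equals the initial difference of Part 1 plus the sum of all subsequent increments, each of which Part 2 classifies either as a nonnegative contribution (a filler with $b_{A \cup \{\pi(q)\}} - b_A \ge 1$, or a transition point in the good-parity case, carrying the submodular slack $(b_{A \cup \{\pi(w_i)\}} - b_A) - (b_{[N] \setminus \pi([w_i-1])} - b_{[N] \setminus \pi([w_i])})$) or as an exact $-1$. Writing $D_0$ for the initial difference, $P$ for the total of the nonnegative contributions, and $n_-$ for the number of $-1$ contributions from the second term onward, we get $\delta(S) = D_0 + P - n_-$, so the inequality for $S$ fails precisely when $n_- > D_0 + P$, the asserted criterion. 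I expect the main obstacle to be organizational rather than conceptual: keeping the ground sets of the two marginal increments aligned so that the diminishing-returns inequality applies (verifying $A \subseteq [N] \setminus \pi([w_i])$ at each transition) and correctly matching the parity of $b_{[N] \setminus \pi([w_{i-1}])}$ against $N - w_{i-1}$ so that the $\alpha_i$ of Corollary \ref{oddalgcomp} lines up with the $\pm 1$ corrections built into the transition values of Proposition \ref{oddalgoutput}; the integrality step turning ``$\delta(S) < 0$'' into ``$\delta(S) \le -1$'' then makes the count-versus-sum criterion sharp.
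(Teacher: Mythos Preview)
Your proposal is correct and is essentially the argument the paper intends: in the paper this corollary is stated without a separate proof, being treated as an immediate bookkeeping consequence of Proposition~\ref{oddalgoutput} and Corollary~\ref{oddalgcomp}, and your slack function $\delta(A)=b_A-\sum_{j\in A}a_j$ together with the telescoping decomposition into filler increments and transition-point increments is exactly that bookkeeping spelled out. Your containment $A\subseteq [N]\setminus\pi([w_i])$ (rather than merely $A\subset [N]\setminus\pi([w_i-1])$) is in fact the precise hypothesis needed for the diminishing-returns form of submodularity, so nothing is missing.
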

	
	\section{Minimality with respect to restricted blowups and constructing induced 4-cycles} \label{4cycleminwall}

	Wall crossings also provide a method of constructing the 4-cycles present in flag spheres that are minimal with respect to blowups coming from edge subdivisions of simplicial complex giving the structure of the cones of the fan (Theorem 1.1 of \cite{LN}). For example, every ray is contained in an induced 4-cycle if $\sigma(X_\Sigma) = 0$ and $\Sigma$ is a simplicial complete locally convex fan (Corollary \ref{sig04cycle}). This involves some geometric information on conormal bundle restrictions that are nef and not ample which may explain why it may be natural to see suspensions in fan structures in earlier combinatorial constructions from Section \ref{wallfan}. Applying the analysis of Lemma \ref{zeroconorm4cycle} to Proposition \ref{conormrestrsusp} and Corollary \ref{suspspecnon} or Corollary \ref{4cycleconstralg} in Corollary \ref{suspalg} yields possible algebraic structures connected to suspension structures of fans studied in Section \ref{wallfan}. \\
	
	In particular, the constructions of 4-cycles here are applications of rational equivalence in a codimension 2 cones/2-dimensional orbit closures instead of a codimension 1 cones/1-dimensional orbit closures as in most combinatorial applications (e.g. use of the wall relations and deformation cones in \cite{ACEP}). They are connected to the contraction theorem for toric varieties (Corollary 14-1-9 on p. 422 -- 423 of \cite{Mat}). \\  
	
	\begin{defn} \label{ind4cycle}
		An \textbf{induced 4-cycle} in a simplicial complex $\Delta$ is a collection of vertices $v_1, v_2, v_3, v_4$ where $(v_i, v_{i + 1}) \in \Delta$ for $1 \le i \le 4$ (with $v_5 \coloneq v_1$) and $(v_1, v_3), (v_2, v_4) \notin \Delta$. In a fan, we use the same term for the simplicial complex structure on the cones where the $k$-dimensional cones give the $(k - 1)$-dimensional faces of the simplicial complex. \\
	\end{defn}
	
	\begin{lem} \label{zeroconorm4cycle}
		Suppose that $\Sigma$ is a simplicial and complete fan of dimension $d \ge 4$ such that its support $|\Sigma| \subset N_{\mathbb{R}}$ is locally convex. Let $\tau = \sigma \cap \sigma'$ be a wall of $\Sigma$. If $D_\alpha \cdot V_\Sigma(\tau) = 0$ for some ray $\alpha \in \tau(1)$, then there is a ray $\alpha' \in \Sigma(1)$ satisfying the following properties: \\
		
		\begin{enumerate} 
			\item Substituting $\alpha'$ in place of $\alpha$ in $\sigma$ and $\sigma'$ yields cones of $\Sigma$. \\
			
			\item $V_\Sigma(\tau') \in \mathbb{R}_+[V_\Sigma(\tau)]$, where $\tau' = \Cone(\tau/\alpha, \alpha')$.  \\
			
			\item The rays $\alpha$, $\beta \coloneq \sigma/\tau$, $\alpha'$, and $\beta' \coloneq \sigma'/\tau$ form an induced 4-cycle in $\Sigma$. 
		\end{enumerate}

	\end{lem}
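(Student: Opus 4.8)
The plan is to descend to the complete $2$-dimensional fan $\lk_\Sigma(\eta)$, where $\eta \coloneq \tau/\alpha$ is the codimension-$2$ face of the wall obtained by deleting $\alpha$. First I would record that $\lk_\Sigma(\eta)$ is a complete fan in $N(\eta)_{\mathbb{R}} \cong \mathbb{R}^2$ (completeness inherited from $\Sigma$, convex support by Lemma \ref{convlink}), and that under the quotient map $N_{\mathbb{R}} \to N(\eta)_{\mathbb{R}}$ the rays $\alpha, \beta, \beta'$ descend to rays $\bar\alpha, \bar\beta, \bar\beta'$ with $\bar\beta, \bar\beta'$ the two neighbors of $\bar\alpha$ (the images of $\sigma, \sigma'$). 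The hypothesis enters through the wall relation $\sum_\rho (D_\rho \cdot V_\Sigma(\tau)) u_\rho = 0$ (p. 301 of \cite{CLS}): its only nonzero coefficients belong to the off-wall rays $\beta, \beta'$ (strictly positive) and to $\tau(1)$, and $D_\alpha \cdot V_\Sigma(\tau) = 0$ kills the $u_\alpha$ term. Projecting to $N(\eta)_{\mathbb{R}}$ annihilates every $u_\gamma$ with $\gamma \in \eta(1)$ and leaves $(D_\beta \cdot V_\Sigma(\tau)) u_{\bar\beta} + (D_{\beta'} \cdot V_\Sigma(\tau)) u_{\bar\beta'} = 0$ with two positive coefficients, so $\bar\beta, \bar\beta'$ are antipodal, spanning a line $\ell \subset N(\eta)_{\mathbb{R}}$ with $\bar\alpha$ the unique ray in the open half-plane on its side.

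Next I would define $\alpha'$. Since $\lk_\Sigma(\eta)$ is complete, the closed half-plane opposite $\bar\alpha$ across $\ell$ is covered by cones of the link; I take $\bar{\alpha'}$ to be the ray in the opposite open half-plane adjacent to both $\bar\beta$ and $\bar\beta'$ and let $\alpha'$ be the corresponding ray of $\Sigma$. Property $1$ is then immediate: $\Cone(\bar{\alpha'},\bar\beta)$ and $\Cone(\bar{\alpha'},\bar\beta')$ are the two maximal cones of the link bounding the opposite half-plane, so $\Cone(\eta,\alpha',\beta)$ and $\Cone(\eta,\alpha',\beta')$ are cones of $\Sigma$, i.e.\ substituting $\alpha'$ for $\alpha$ in $\sigma$ and $\sigma'$ yields cones. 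For Property $2$, the antipodality of $\bar\beta, \bar\beta'$ forces \emph{both} $\bar\alpha$ and $\bar{\alpha'}$ to have self-intersection $0$ (each has neighbor-sum $u_{\bar\beta}+u_{\bar\beta'}=0$), so $V_\Sigma(\tau)$ and $V_\Sigma(\tau')=V_{\lk_\Sigma(\eta)}(\bar{\alpha'})$ are both fibers of the ruling contracting the $\ell$-direction and hence numerically proportional; the positive scalar comes from the multiplicity rescaling of Proposition 6.3.8 on p. 289 and p. 302 of \cite{CLS} (cf.\ p. 421 of \cite{Mat}), giving $V_\Sigma(\tau') \in \mathbb{R}_+[V_\Sigma(\tau)]$.

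For Property $3$ I would transfer the adjacency data of the four-ray link back to $\Sigma$ using flagness (Proposition 5.3 on p. 279 of \cite{LR}). Each of $\alpha,\beta,\alpha',\beta'$ is adjacent to every ray of $\eta$, so any pair among them spans an edge of $\Sigma$ iff its join with $\eta$ is a cone of $\Sigma$, which by flagness holds iff the two images are adjacent in $\lk_\Sigma(\eta)$. In the four-ray link the edges are exactly $\Cone(\bar\alpha,\bar\beta), \Cone(\bar\beta,\bar{\alpha'}), \Cone(\bar{\alpha'},\bar\beta'), \Cone(\bar\beta',\bar\alpha)$, while $\bar\alpha,\bar{\alpha'}$ (unique rays of the two opposite open half-planes) and $\bar\beta,\bar\beta'$ (antipodal) each fail to span a strongly convex $2$-cone. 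Pulling back, $(\alpha,\beta),(\beta,\alpha'),(\alpha',\beta'),(\beta',\alpha)\in\Sigma$ and $(\alpha,\alpha'),(\beta,\beta')\notin\Sigma$, which is precisely an induced $4$-cycle (Definition \ref{ind4cycle}).

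The main obstacle is the selection of $\bar{\alpha'}$: a priori the opposite half-plane of a complete, locally convex link could be subdivided into several $2$-cones, so that no single ray is adjacent to both $\bar\beta$ and $\bar\beta'$ and Property $1$ fails. This is exactly where local convexity must be used decisively rather than only through completeness. The conormal bundle $-D_\alpha|_{D_\alpha}$ is nef and globally generated by local convexity but has degree $0$ on $V_\Sigma(\tau)$, so it determines a contraction in the sense of the toric minimal model program (Theorem 14-1-9 on p. 422 of \cite{Mat}) contracting $V_\Sigma(\tau)$; the content to be pinned down is that the relevant fiber of this contraction is irreducible, equivalently that $\lk_\Sigma(\eta)$ is the minimal four-ray ruled fan. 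It is precisely this irreducibility that makes $\alpha'$ unique and forces the $4$-cycle to be induced, and I would expect the bulk of the work to lie in establishing it from the interaction of the nef-but-not-ample conormal bundle with the convexity of $|\St_\Sigma(\omega)|$ (Lemma \ref{convlink}).
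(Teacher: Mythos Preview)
Your approach is correct and close in spirit to the paper's, though you organize everything through the $2$-dimensional link $\lk_\Sigma(\eta)$ rather than arguing directly in $N_{\mathbb{R}}$. The paper handles Parts 1 and 2 by simply citing Matsuki's Proposition 14-1-5 and Lemma 14-1-1 (the toric extremal-ray/contraction structure), which is precisely the input you flag as the ``main obstacle''; so your concern is well-placed, but the paper treats it as a black box rather than re-deriving the four-ray structure of the link here. For Part 3 the two routes diverge slightly. For $(\beta,\beta')\notin\Sigma$ the paper keeps the wall relation in $N_{\mathbb{R}}$: the positive combination $(D_\beta\cdot V_\Sigma(\tau))u_\beta+(D_{\beta'}\cdot V_\Sigma(\tau))u_{\beta'}$ equals the nonnegative combination $\sum_{\gamma\in\tau(1)}(-D_\gamma\cdot V_\Sigma(\tau))u_\gamma\in\tau$ (local convexity supplies the signs), so if $(\beta,\beta')\in\Sigma$ an interior point of that $2$-cone would lie in $\tau$, contradicting the fan axiom. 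This is essentially your antipodality step before projecting to $N(\eta)_{\mathbb{R}}$. For $(\alpha,\alpha')\notin\Sigma$ the paper bypasses the four-ray link altogether: Property 2 gives $D_\alpha\cdot V_\Sigma(\tau')=0$, and since $\alpha\notin\tau'(1)$ with $\Sigma$ simplicial this forces $(\alpha,\tau')\notin\Sigma$; flagness then isolates $(\alpha,\alpha')$ as the unique missing edge among $\{\alpha\}\cup\tau'(1)$. That last maneuver is a bit more economical than yours because it uses only Properties 1 and 2 as inputs and never needs to know that $\lk_\Sigma(\eta)$ has exactly four rays (that fact is established separately, later, in Corollary \ref{4cycleconstralg}). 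Your flagness biconditional transferring edge-adjacency between $\Sigma$ and $\lk_\Sigma(\eta)$ is correct and is the same mechanism the paper invokes.
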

	
	\begin{rem} 
		Since $\Sigma$ is simplicial, the assumption that $\alpha \in \tau(1)$ is necessary in order to have $D_\alpha \cdot V_\Sigma(\tau) = 0$. Also, the $d \ge 4$ case accounts for all nontrivial cases in the setting of our problem where $d$ is even since $d = 2$ yields cones whose bases are polygons with $\ge 4$ sides. \\

	\end{rem}

	\begin{proof}
		Since Part 1 and Part 2 are an application of Proposition 14-1-5 on p. 419 and Lemma 14-1-1 on p. 414 of \cite{Mat}, we will focus on Part 3. Note that $\tau$ yields an extremal ray of $\overline{NE}(X_{\lk_\Sigma(\omega)})$ when $\tau \supset \omega$ since $D_\alpha \cdot V_\Sigma(\tau) = 0$. The first two parts imply that $(\alpha, \beta), (\beta, \alpha'), (\alpha', \beta'), (\beta', \alpha) \in \Sigma(2)$. It remains to show that $(\alpha, \alpha'), (\beta, \beta') \notin \Sigma$. We first show that $(\beta, \beta') \notin \Sigma$. The wall relation from rational equivalence (p. 301 of \cite{CLS}) implies that \[ \sum_{ \gamma \in \tau(1)} (D_\gamma \cdot V_\Sigma(\tau)) u_\gamma + (D_\beta \cdot V_\Sigma(\tau)) u_\beta + (D_{\beta'} \cdot V_\Sigma(\tau)) u_{\beta'} = 0. \]
		
		Since $\Sigma$ is simplicial, we have that $D_\beta \cdot V_\Sigma(\tau), D_{\beta'} \cdot V_\Sigma(\tau) > 0$. Now recall that the restriction of $-D_\gamma$ to $D_\gamma$ (the conormal bundle) is globally generated by the local convexity assumption (Lemma 3.2 on p. 264 of \cite{LR}). Since we are working with a full-dimensional fan that has convex support (Theorem 6.3.12 on p. 291 of \cite{CLS}), this implies that $-D_\gamma \cdot V_\Sigma(\tau) \ge 0$. In this context, we can rewrite the wall relation as \[  (D_\beta \cdot V_\Sigma(\tau)) u_\beta + (D_{\beta'} \cdot V_\Sigma(\tau)) u_{\beta'} = \sum_{ \gamma \in \tau(1)} (-D_\gamma \cdot V_\Sigma(\tau)) u_\gamma. \]

		Then, each side of the relation gives a nonnegative linear combination of rays. It is clear that the right hand side gives an element of the wall $\tau \in \Sigma(d - 1)$. If $(\beta, \beta') \in \Sigma$, the strict positivity of the coefficients implies that the left hand side gives an element in the \emph{interior} of $(\beta, \beta')$. However, the fact that the cones of $\Sigma$ form a fan means that they can only intersect on boundaries (faces from Definition 3.1.2 on p. 106 and Theorem 2.3.2 on p. 77 of \cite{CLS}). This is not possible unless the cone generated by $\beta$ and $\beta'$ lies inside the wall $\tau$, which is impossible since $\Sigma$ is simplicial. Thus, we have that $(\beta, \beta') \notin \Sigma$. \\
		
		The proof that $(\alpha, \alpha') \notin \Sigma$ is an application of $V_\Sigma(\tau') \in \mathbb{R}_+[V_\Sigma(\tau)]$ and the flagness of the simplicial complex of cones from a locally convex fan (Proposition 5.3 on p. 279 -- 280 of \cite{LR}). Recall that $D_\alpha \cdot V_\Sigma(\tau) = 0$. Combining this with $V_\Sigma(\tau') \in \mathbb{R}_+[V_\Sigma(\tau)]$ implies that $D_\alpha \cdot V_\Sigma(\tau') = 0$. Since $\alpha \notin \tau'(1)$ and $\Sigma$ is simplicial, this implies that $(\alpha, \tau') \notin \Sigma$. The flagness of $\Sigma$ implies that there is \emph{some} pair of rays among $\alpha$ and $\tau'(1)$ which do \emph{not} form a cone in $\Sigma$. The only possible candidate is $(\alpha, \alpha')$, which means that $(\alpha, \alpha') \notin \Sigma$. \\ 
		
	\end{proof}

	\begin{rem} \textbf{(Pictures and local convexity) \\} \label{picloconv}
		Rewriting the inequality $-D_\gamma \cdot V_\Sigma(\tau) \ge 0$ from the proof of Lemma \ref{zeroconorm4cycle} as $D_\gamma \cdot V_\Sigma(\tau) \le 0$, the nef property of the conormal bundle has a direct connection to the ``convex'' case of a wall crossing moving between the off-wall rays with respect to a particular on-wall ray from Lemma 14-1-7 on p. 421 of \cite{Mat}. The expansion of $\beta$ with respect to the $\sigma'(1)$-basis and the expansion of $\beta'$ with respect to the $\sigma(1)$-basis both have nonnegative coefficients of $u_\gamma$. \\
		
		\noindent In this context, being ``flat'' with respect to the ray is that we can move between the off-wall rays using the span of a subcone of the wall that doesn't contain the ray we are considering. The coefficient of $u_\gamma$ in these same expansions would be 0. This is relevant to visualizing the affine equivalence of the graph of the support function of the conormal bundle $-D_\rho|_{D_\rho}$ and the lift of the support of $\lk_\Sigma(\rho)$ from $N(\rho)_{\mathbb{R}}$ to $N_{\mathbb{R}}$ in Proposition A.1 on p. 282 -- 283 of \cite{LR}. Note that $N(\rho)_{\mathbb{R}}$ implicitly makes a choice of a codimension 1 vector space of $N_{\mathbb{R}}$ that does not contain $\rho$. \\
	\end{rem}

	The results above also give a possible algebraic structure on the combinatorial suspension structures studied in Section \ref{wallfan}. \\

	\begin{cor} \label{suspalg}
		Suppose that we are in the setting of Lemma \ref{zeroconorm4cycle}. The unique ray $\alpha'$ with the properties listed in Lemma \ref{zeroconorm4cycle} has $D_{\alpha'} \cdot V_\Sigma(\tau') = 0$. Note that $\tau$ and $\tau'$ have the same off-wall rays inducing flat wall crossings with respect to $\alpha$ and $\alpha'$ (Lemma 14-1-7 on p. 421 of \cite{Mat}) using the maximal cones containing them. \\
		
	\end{cor}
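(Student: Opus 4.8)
The plan is to reduce the claimed vanishing $D_{\alpha'} \cdot V_\Sigma(\tau') = 0$ to the corresponding intersection against the \emph{original} wall curve $V_\Sigma(\tau)$, using the proportionality of curve classes already recorded in Part 2 of Lemma \ref{zeroconorm4cycle}. That part gives $V_\Sigma(\tau') \in \mathbb{R}_+[V_\Sigma(\tau)]$, i.e. $V_\Sigma(\tau') = c\, V_\Sigma(\tau)$ as curve classes for some scalar $c > 0$. Intersecting both sides with the divisor $D_{\alpha'}$ yields $D_{\alpha'} \cdot V_\Sigma(\tau') = c\,(D_{\alpha'} \cdot V_\Sigma(\tau))$, so it suffices to prove $D_{\alpha'} \cdot V_\Sigma(\tau) = 0$. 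The advantage of this reformulation is that, although $\alpha' \in \tau'(1)$ makes the coefficient of $u_{\alpha'}$ in the wall relation for $\tau'$ a priori nontrivial, the ray $\alpha'$ is ``far'' from the original wall $\tau$, and the latter intersection is forced to vanish on incidence grounds.

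Next I would pin down the combinatorics of $\tau'$. Writing $\tau' = \Cone(\tau/\alpha, \alpha')$, Part 1 of Lemma \ref{zeroconorm4cycle} says that replacing $\alpha$ by $\alpha'$ in $\sigma$ and in $\sigma'$ produces cones of $\Sigma$; these are exactly $\widetilde{\sigma} = \Cone(\tau', \beta)$ and $\widetilde{\sigma}' = \Cone(\tau', \beta')$, so $\tau' = \widetilde{\sigma} \cap \widetilde{\sigma}'$ is a wall whose two off-wall rays are again $\beta$ and $\beta'$. This already yields the second assertion of the corollary, that $\tau$ and $\tau'$ carry the common off-wall pair $\{\beta, \beta'\}$. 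To obtain $D_{\alpha'} \cdot V_\Sigma(\tau) = 0$ I would check that $\alpha'$ is not a ray of either maximal cone adjacent to $\tau$, i.e. $\alpha' \notin \sigma(1) \cup \sigma'(1) = \tau(1) \cup \{\beta, \beta'\}$. Here the induced $4$-cycle non-edges from Part 3 of Lemma \ref{zeroconorm4cycle} enter: since $(\alpha, \alpha') \notin \Sigma$, the ray $\alpha'$ cannot lie in $\tau(1)$ (any two rays of the simplicial cone $\tau$ span a face of $\tau$, hence are adjacent) and cannot equal $\beta$ or $\beta'$ (both of which are adjacent to $\alpha$ inside $\sigma$ and $\sigma'$). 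Hence $D_{\alpha'}$ is disjoint from the torus-invariant curve $V_\Sigma(\tau)$, so $D_{\alpha'} \cdot V_\Sigma(\tau) = 0$.

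Combining the two paragraphs gives $D_{\alpha'} \cdot V_\Sigma(\tau') = c\,(D_{\alpha'} \cdot V_\Sigma(\tau)) = 0$, which is the main claim. For the closing remark I would invoke Remark \ref{picloconv} together with Lemma 14-1-7 on p.\ 421 of \cite{Mat}: the equality $D_\alpha \cdot V_\Sigma(\tau) = 0$ is precisely flatness of the $\tau$-wall crossing with respect to $\alpha$, and the symmetric statement $D_{\alpha'} \cdot V_\Sigma(\tau') = 0$ just established is flatness of the $\tau'$-wall crossing with respect to $\alpha'$; both crossings move between the same off-wall rays $\beta$ and $\beta'$.

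The step I expect to be the main obstacle is making the disjointness argument fully rigorous, namely that a ray not belonging to either maximal cone adjacent to a wall contributes $0$ to that wall's intersection numbers. Concretely this is the assertion that the only ray divisors meeting $V_\Sigma(\tau)$ are those indexed by $\tau(1) \cup \{\beta, \beta'\}$; I would justify it either through the incidence description of $V_\Sigma(\tau) \cong \mathbb{P}^1$ (with its two torus-fixed points corresponding to $\sigma$ and $\sigma'$) or directly from the wall relation $\sum_\gamma (D_\gamma \cdot V_\Sigma(\tau)) u_\gamma = 0$, whose nonzero coefficients are supported exactly on $\tau(1) \cup \{\beta, \beta'\}$. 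A secondary point to treat carefully is that $c > 0$ is a genuine scalar of proportionality between the \emph{classes} $V_\Sigma(\tau')$ and $V_\Sigma(\tau)$, so that intersecting with $D_{\alpha'}$ is legitimate without the two curves coinciding as cycles.
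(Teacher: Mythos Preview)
Your proof is correct and follows essentially the same route as the paper: both arguments use $(\alpha,\alpha')\notin\Sigma$ to conclude $D_{\alpha'}\cdot V_\Sigma(\tau)=0$, and then transfer this via the proportionality $V_\Sigma(\tau')\in\mathbb{R}_+[V_\Sigma(\tau)]$. Your write-up is considerably more detailed than the paper's two-line proof, in particular spelling out why $\alpha'\notin\sigma(1)\cup\sigma'(1)$ and why the off-wall rays of $\tau$ and $\tau'$ coincide, but the underlying logic is identical.
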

	
	\begin{proof}
		Since $(\alpha, \alpha') \notin \Sigma(2)$ by Lemma \ref{zeroconorm4cycle}, we have that $D_{\alpha'} \cdot V_\Sigma(\tau) = 0$. Then $V_\Sigma(\tau') \in \mathbb{R}_+[V_\Sigma(\tau)]$ implies that we also have $D_{\alpha'} \cdot V_\Sigma(\tau') = 0$. \\
	\end{proof}

	\begin{rem} \textbf{(Connection to material in Section \ref{wallfan}) \\} \label{wallsuspalg}
		Applying this for $\alpha = \rho_{i_j}$ for $j \in A \subset [p]$ and $\alpha = \delta$ for a non-special ray $\delta$ of $\lk_\Sigma(\sigma_{i_1, \ldots, i_p})$ with respect to $A$ or rays $\rho_{i_j}$ inducing the special property on a ray $\gamma$. This may be related to the ``other side'' of the suspension structures (i.e. the only possible element not in the link of the ``first'' one) inducing conormal bundle restrictions equal to 0 in Proposition \ref{conormrestrsusp} and Corollary \ref{suspspecnon}. However, we note that the construction of $\alpha'$ itself depends on the wall $\tau$ that we started with. \\
	\end{rem}

	A more concise description of the main result of Lemma \ref{zeroconorm4cycle} is rewritten below. \\

	\begin{cor}
		Conormal bundle restrictions equal to 0 on toric varieties $X_\Sigma$ associated to simplicial fans with locally convex fan $\Sigma$ yield rays and 2-cones contained in induced 4-cycles. This includes any ray divisors yielding conormal bundles that are nef and not big. If the signature of $X_\Sigma$ is 0, then any ray is contained in an induced 4-cycle. \\
	\end{cor}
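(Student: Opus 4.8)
The plan is to reduce each of the three assertions to the single hypothesis needed to invoke Lemma~\ref{zeroconorm4cycle}: the existence of a wall $\tau$ of the relevant fan and a ray $\alpha \in \tau(1)$ with $D_\alpha \cdot V_\Sigma(\tau) = 0$. Once such a pair is produced, the lemma supplies the opposite ray $\alpha'$ and the off-wall rays $\beta = \sigma/\tau$, $\beta' = \sigma'/\tau$, and the four rays $\alpha, \beta, \alpha', \beta'$ form an induced $4$-cycle whose edges are precisely the $2$-cones $(\alpha,\beta), (\beta,\alpha'), (\alpha',\beta'), (\beta',\alpha)$. Thus the center ray $\alpha$ and these $2$-cones all lie in an induced $4$-cycle, and the whole corollary becomes a matter of verifying, under each hypothesis, that $D_\alpha \cdot V_\Sigma(\tau) = 0$ for a suitable wall.

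First I would handle the literal vanishing case. If $-D_{i_j}|_{D_{i_1} \cap \cdots \cap D_{i_p}} = 0$, then this trivial class meets in degree $0$ every torus-invariant curve contained in $D_{i_1} \cap \cdots \cap D_{i_p}$; by the projection/restriction formula this gives $D_{i_j} \cdot V_\Sigma(\tau) = 0$ for every wall $\tau$ of $\Sigma$ lying over $\sigma_{i_1,\ldots,i_p}$ and containing $\rho_{i_j}$. For $p = 1$ one applies Lemma~\ref{zeroconorm4cycle} to $\Sigma$ directly with $\alpha = \rho_{i_j}$ and any wall $\tau \ni \rho_{i_j}$; for $p > 1$ one runs the identical argument inside the link $\lk_\Sigma(\sigma_{i_1,\ldots,i_p})$, which has full-dimensional convex support in $N(\sigma_{i_1,\ldots,i_p})_{\mathbb{R}}$ by Lemma~\ref{convlink}, and transfers the induced $4$-cycle back to $\Sigma$ using the flagness of the cone complex.

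The key reduction, and what I expect to be the main obstacle, is passing from ``nef and not big'' to the vanishing of a wall intersection. Fix a ray $\rho$ whose conormal bundle $-D_\rho|_{D_\rho}$ is nef (automatic from local convexity, cf. Lemma~3.2 of \cite{LR}) and not big. On the complete simplicial toric variety $D_\rho = V_\Sigma(\rho)$, not big forces not ample, since ampleness implies bigness; so by the toric Kleiman criterion (ampleness is equivalent to strict positivity on every wall curve, see \cite{CLS}) nefness leaves some wall $\bar\tau$ of the fan of $D_\rho$ with $(-D_\rho|_{D_\rho}) \cdot V_{D_\rho}(\bar\tau) = 0$. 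Lifting $\bar\tau$ to the wall $\tau \ni \rho$ of $\Sigma$ that it determines yields $D_\rho \cdot V_\Sigma(\tau) = 0$, and Lemma~\ref{zeroconorm4cycle} applies with $\alpha = \rho$. The care required here is to keep straight on which variety each intersection number is computed and to confirm that the completeness and convexity hypotheses of the Kleiman criterion hold for the fan of $D_\rho$ inside $N(\rho)_{\mathbb{R}}$; note this case overlaps the previous one exactly when $\dim P_{-D_\rho}^{D_\rho} = 0$.

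Finally, for the signature $0$ statement I would invoke the $p = 1$ vanishing recorded in Remark~\ref{sign0background}: $\sigma(X_\Sigma) = 0$ forces $D_\rho^d = 0$ for every ray $\rho$, which is exactly the condition that $P_{-D_\rho}^{D_\rho}$ is not full-dimensional, that is, that $-D_\rho|_{D_\rho}$ is nef and not big. Every ray therefore meets the hypothesis of the previous paragraph, so every ray occurs as the center $\alpha$ of an induced $4$-cycle, which is the asserted conclusion.
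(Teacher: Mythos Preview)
Your proposal is correct and follows essentially the same route as the paper: both reduce to producing a wall $\tau \ni \alpha$ with $D_\alpha \cdot V_\Sigma(\tau) = 0$ and then invoke Lemma~\ref{zeroconorm4cycle}; both pass from nef-not-big to not-ample and apply the toric Kleiman criterion; and both close the signature~$0$ case via $D_\rho^d = 0 \Longrightarrow \dim P_{-D_\rho}^{D_\rho} < d-1$. Your treatment is in fact more detailed than the paper's one-line ``follows directly from Lemma~\ref{zeroconorm4cycle}'' for the first assertion, in particular your handling of the $p>1$ case inside $\lk_\Sigma(\sigma_{i_1,\ldots,i_p})$ and the flagness transfer back to $\Sigma$, but this is elaboration rather than a different argument.
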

	
	\begin{proof}
		The first part follows directly from Lemma \ref{zeroconorm4cycle}. If the restriction of $-D_\rho$ to $D_\rho$ is nef and not big for some ray $\rho \in \Sigma(1)$, it is \emph{not} ample. This is because $D$ is big if and only if $\dim P_D = \dim X_\Sigma$ (p. 427 of \cite{CLS}) and $\dim P_D = \dim X_\Sigma$ if $D$ is ample (p. 272 of \cite{CLS}). Then, the toric version of Kleiman's criterion (Theorem 6.3.13 on p. 292 of \cite{CLS}) to $V_\Sigma(\rho)$ and rescaling by a multiplicity implies that there is some wall $\tau \in \Sigma(d - 1)$ containing $\rho$ such that $D_\rho \cdot V_\Sigma(\tau) = 0$. Note that $\tau \ni \rho$ is necessary since $\Sigma$ is assumed to be simplicial. This yields a conormal bundle restriction that is equal to 0 and Lemma \ref{zeroconorm4cycle} implies that $\rho$ is contained in an induced 4-cycle. If the signature of $X_\Sigma$ is 0, this applies to \emph{any} ray $\rho \in \Sigma(1)$ (Lemma 3.2(iii) on p. 264 and proof of Theorem 1.2(iii) on p. 266 of \cite{LR}). \\
	\end{proof}
	
	We can give some additional context based on what conormal bundles are known to restrict to 0 based on the mixed volume 0 conditions in Theorem \ref{zerovolcrit} applied to monomials that vanish when the signature of $X_\Sigma$ is $0$. \\
	
	\begin{cor} \label{sig04cycle}
		Every ray of a complete simplicial locally convex fan yielding a toric variety with signature 0 is contained in an induced 4-cycle. Also, every 2-cone given by a special ray and a non-special ray (Definition \ref{specnonspecdef}, Corollary \ref{mixvol0toconorm0}) of the link over a fixed $p$-cone ($1 \le p \le \frac{d}{2}$) or a minimal monomial ray paired with a special ray of a $p$-cone containing it is contained in an induced 4-cycle. \\
	\end{cor}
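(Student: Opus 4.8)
The plan is to reduce all three assertions to a single application of Lemma \ref{zeroconorm4cycle}, carried out in a suitable star fan, followed by a lifting argument that transports the resulting induced 4-cycle back to $\Sigma$. The first assertion is already the content of the preceding corollary, so I would simply record that signature $0$ forces $D_\rho^d = D_\rho \cdot (-D_\rho)^{d-1} = 0$ for every ray $\rho$, whence (by the toric Kleiman criterion and simpliciality) some wall $\tau \ni \rho$ satisfies $D_\rho \cdot V_\Sigma(\tau) = 0$, and Lemma \ref{zeroconorm4cycle} applied to $\Sigma$ itself produces a 4-cycle through $\rho$.

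For the $2$-cone assertions the key move is to replace $\Sigma$ by a star fan, so that the conormal vanishing supplied by Corollary \ref{mixvol0toconorm0} becomes a wall-crossing vanishing $D_\alpha \cdot V(\tau) = 0$ of the form demanded by Lemma \ref{zeroconorm4cycle}. Given a special ray $\gamma$ and a non-special ray $\delta$ of $\lk_\Sigma(\sigma_{i_1, \ldots, i_p})$, I would set $L \coloneq \St_\Sigma(\sigma_{i_1, \ldots, i_p})$ in $N(\sigma_{i_1, \ldots, i_p})_{\mathbb{R}}$. This fan is simplicial, is complete (the star of a cone in a complete fan is the fan of the complete orbit closure $V(\sigma_{i_1, \ldots, i_p})$), and is locally convex: by Lemma \ref{convlink} each $\St_L(\bar\rho) = \St_\Sigma(\sigma_{i_1, \ldots, i_p} + \rho)$ has convex support. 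Choosing a maximal cone $\sigma$ of $L$ containing the face $(\gamma, \delta)$ and setting $\tau = \sigma \setminus \gamma$, Corollary \ref{mixvol0toconorm0} gives $D_\delta \cdot V_L(\tau) = 0$ with $\delta \in \tau(1)$, so Lemma \ref{zeroconorm4cycle} applies with off-wall ray $\delta$ and apex $\beta = \sigma/\tau = \gamma$. This yields an induced 4-cycle $\delta, \gamma, \delta', \beta'$ in $L$ whose edge $(\delta, \gamma)$ is exactly the prescribed 2-cone. For the minimal-monomial case I would instead work in $L' \coloneq \St_\Sigma(\sigma_{i_1, \ldots, \widehat{i_j}, \ldots, i_p})$, where the first bullet of Corollary \ref{mixvol0toconorm0} gives $D_{i_j} \cdot V_{L'}(\sigma' \setminus \gamma) = 0$; applying Lemma \ref{zeroconorm4cycle} with off-wall ray $\rho_{i_j}$ and apex $\gamma$ produces a 4-cycle through the edge $(\rho_{i_j}, \gamma)$.

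The remaining step, which I expect to be the main obstacle, is transporting the induced 4-cycle from the star fan back to $\Sigma$. Its four vertices are rays of $L$, hence images of rays of $\Sigma$ spanning a $(p+1)$-cone together with $\sigma_{i_1, \ldots, i_p}$; each edge of the 4-cycle in $L$ is a $(p+2)$-cone of $\Sigma$, so by flagness of the cone complex (Proposition 5.3 of \cite{LR}) the corresponding pair of rays is a genuine edge of $\Sigma$. For the non-edge $(\delta, \delta')$ I would argue that $\{\delta, \delta'\} \cup \{\rho_{i_1}, \ldots, \rho_{i_p}\}$ is not a cone of $\Sigma$, whereas every other pair inside it is a face (the $\rho_{i_k}$ pairwise span faces of $\sigma_{i_1, \ldots, i_p}$, and each $(\delta, \rho_{i_k})$ and $(\delta', \rho_{i_k})$ is a face since $\delta, \delta' \in \lk_\Sigma(\sigma_{i_1, \ldots, i_p})$); flagness then forces $(\delta, \delta') \notin \Sigma$, and symmetrically $(\gamma, \beta') \notin \Sigma$ (and analogously for the two non-edges in the minimal-monomial case). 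Thus the four rays form an induced 4-cycle of $\Sigma$ containing the prescribed 2-cone. I expect the bookkeeping here — verifying completeness and local convexity of the star fan and confirming that precisely the intended pair fails to be a face — to be the part requiring the most care.
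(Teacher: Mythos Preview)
Your proposal is correct, and the lifting argument via flagness goes through as you describe. However, the paper's intended route (Corollary~\ref{sig04cycle} is stated without proof, as an application of the preceding unlabeled corollary combined with Corollary~\ref{mixvol0toconorm0}) avoids the detour through the star fan entirely. The point is that the vanishing statements in Corollary~\ref{mixvol0toconorm0} are, up to a positive multiplicity factor, already vanishing statements for walls of $\Sigma$ itself: writing $\widetilde{\tau} = \Cone(\sigma_{i_1,\ldots,i_p}, \sigma \setminus \gamma)$, the curve $V_{\lk_\Sigma(\sigma_{i_1,\ldots,i_p})}(\sigma \setminus \gamma)$ is the same curve as $V_\Sigma(\widetilde{\tau})$, so $D_\delta \cdot V_\Sigma(\widetilde{\tau}) = 0$ (respectively $D_{i_j} \cdot V_\Sigma(\widetilde{\tau}) = 0$) holds directly in $\Sigma$, with $\delta$ (respectively $\rho_{i_j}$) on the wall $\widetilde{\tau}$ and $\gamma$ as one of its two off-wall rays. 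Lemma~\ref{zeroconorm4cycle} then applies in $\Sigma$ with $\alpha = \delta$ (respectively $\alpha = \rho_{i_j}$) and $\beta = \gamma$, yielding the 4-cycle through the edge $(\delta,\gamma)$ (respectively $(\rho_{i_j},\gamma)$) immediately, with no lifting step.

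What your approach buys is a clean separation of concerns: you verify the hypotheses of Lemma~\ref{zeroconorm4cycle} in a lower-dimensional complete fan and only afterwards translate combinatorially. What the direct approach buys is brevity: since $V_\Sigma(\widetilde{\tau})$ and the corresponding curve in the star fan coincide, the passage to $L$ and the subsequent flagness argument to pull non-edges back to $\Sigma$ are unnecessary. The bookkeeping you flag as ``the main obstacle'' simply does not arise.
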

	
	\color{black}

	We take a closer look at where the induced 4-cycles come from and what the fourth vertex of the induced 4-cycle looks like when we have an extremal wall. \\
	
	\begin{defn} (p. xix -- xx of \cite{Sch} and p. 27 of \cite{CLS}) \\ \label{relint}
		The \textbf{relative interior} of a set $A \subset \mathbb{R}^n$ is the interior relative to its affine hull, which is the set of all affine combinations of elements of $A$. In the case of a cone $\sigma$, this means linear combinations of rays in $\sigma(1)$ where each ray has a strictly positive coefficient. \\
	\end{defn}
	
	\begin{lem} (proof of Proposition 14-1-5 on p. 421 -- 422 of \cite{Mat}) \label{relintuniq} \\
		Let $\Sigma$ be a complete simplicial fan of dimension $d$. Given $\alpha \in \tau(1)$ such that $D_\alpha \cdot V_\Sigma(\tau) \ge 0$, $\tau$ is the only $(d - 1)$-dimensional cone in $\Sigma$ containing $\tau \setminus \alpha$ whose relative interior is contained in the half-space $\{ z \in N_{\mathbb{R}} : \langle \alpha^*, z \rangle > 0 \}$, where $\alpha^*$ denotes the dual of $\alpha$ with respect to the basis given by the rays of one of the two maximal cones containing $\tau$. \\
	\end{lem}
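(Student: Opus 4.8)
The plan is to pass to the two-dimensional complete fan obtained by quotienting out the codimension-two face $\omega := \tau \setminus \alpha$ (the $(d-2)$-cone spanned by $\tau(1) \setminus \{\alpha\}$). Since $\Sigma$ is complete, $\tau$ is a wall shared by exactly two maximal cones $\sigma, \sigma'$; write $\beta := \sigma/\tau$ and $\beta' := \sigma'/\tau$ for the off-wall rays, so $\sigma(1) = \omega(1) \cup \{\alpha, \beta\}$ and $\sigma'(1) = \omega(1) \cup \{\alpha, \beta'\}$. First I would record that, because $\Sigma$ is simplicial, any $(d-1)$-cone of $\Sigma$ containing $\omega$ meets $\omega$ in a face and is therefore of the form $\Cone(\omega, \gamma)$ for a single extra ray $\gamma$; these are exactly the cones over which the statement quantifies. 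Projecting along $\pi : N_{\mathbb{R}} \to \bar N_{\mathbb{R}} := N_{\mathbb{R}}/\Span(\omega)$, the star of $\omega$ becomes a complete fan in the plane $\bar N_{\mathbb{R}} \cong \mathbb{R}^2$ whose rays (``spokes'') are the images $\bar\gamma := \pi(u_\gamma)$, and simpliciality of $\sigma$ guarantees that $\{\bar u_\alpha, \bar u_\beta\}$ is a basis, which I use as coordinates $(x,y)$ with $\bar u_\alpha = (1,0)$, $\bar u_\beta = (0,1)$.

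Next I would translate the half-space condition into these coordinates. Since $\alpha^*$ is the dual basis vector of $u_\alpha$ in the $\sigma(1)$-basis, it vanishes on $\Span(\omega)$ and on $u_\beta$, so it descends to the functional reading off the $x$-coordinate on $\bar N_{\mathbb{R}}$; thus $\langle \alpha^*, z\rangle$ equals the $x$-coordinate of $\pi(z)$ for every $z \in N_{\mathbb{R}}$. The relative interior of $\Cone(\omega, \gamma)$ consists of strictly positive combinations of $\omega(1)$ and $u_\gamma$, and its image under $\pi$ is the open ray $\mathbb{R}_{>0}\,\bar u_\gamma$; hence the relative interior of $\Cone(\omega,\gamma)$ lies in $\{z : \langle \alpha^*, z\rangle > 0\}$ if and only if the spoke $\bar u_\gamma$ has strictly positive $x$-coordinate. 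The problem reduces to showing that among the spokes of this planar fan exactly one lies in the open right half-plane $H := \{x > 0\}$, namely $\bar\alpha = (1,0)$ (which does lie in $H$, having $x$-coordinate $1$).

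To locate the two neighbours of $\bar\alpha$ I would project the wall relation for $\tau$. Writing $a := D_\alpha \cdot V_\Sigma(\tau) \ge 0$ (hypothesis) and $b := D_\beta \cdot V_\Sigma(\tau) > 0$, $b' := D_{\beta'} \cdot V_\Sigma(\tau) > 0$ (positivity of the off-wall coefficients in a complete simplicial fan, exactly as used in the proof of Lemma \ref{zeroconorm4cycle} and on p. 301 of \cite{CLS}), the relation $\sum_\rho (D_\rho \cdot V_\Sigma(\tau)) u_\rho = 0$ projects to $a\,\bar u_\alpha + b\,\bar u_\beta + b'\,\bar u_{\beta'} = 0$. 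Solving yields $\bar u_{\beta'} = (-a/b',\,-b/b')$, whose $x$-coordinate is $\le 0$ and $y$-coordinate is $< 0$; in particular $\bar{\beta'} \notin H$, while $\bar\beta = (0,1)$ sits on the boundary line $\{x=0\}$ and so is also excluded from the open half-plane.

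Finally I would run a covering argument. The two maximal planar cones adjacent to the spoke $\bar\alpha$ are $\bar\sigma = \Cone(\bar u_\alpha, \bar u_\beta)$ and $\bar\sigma' = \Cone(\bar u_\alpha, \bar u_{\beta'})$, occupying the angular arcs $[0^\circ, 90^\circ]$ and $[\theta', 360^\circ]$ respectively, where $\theta' \in (180^\circ, 270^\circ]$ is the angle of $\bar{\beta'}$ (forced by its sign pattern). Their union covers every direction with $x > 0$ (the sector $(-90^\circ, 90^\circ)$): $[0^\circ,90^\circ]$ covers $[0^\circ,90^\circ)$, and $[\theta',360^\circ]$ covers $(270^\circ,360^\circ)$ precisely because $\theta' \le 270^\circ$. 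Since $\bar\sigma, \bar\sigma'$ are maximal cones of a fan, their interiors contain no spoke, so the only spokes meeting $\bar\sigma \cup \bar\sigma'$ are the three boundary spokes $\bar\alpha, \bar\beta, \bar{\beta'}$; of these only $\bar\alpha$ lies in the open $H$. Pulling back through $\pi$ identifies $\tau = \Cone(\omega, \alpha)$ as the unique such $(d-1)$-cone. The main obstacle is the bookkeeping of this last step: one must check that the degenerate cases $a = 0$ (so $\bar{\beta'}$ sits on the line $\{x=0\}$) and $\bar\beta$ lying exactly on that line keep both spokes out of the \emph{open} half-plane, and that the weak hypothesis $a \ge 0$ rather than $a > 0$ still forces $\bar\sigma \cup \bar\sigma'$ to cover all of $H$.
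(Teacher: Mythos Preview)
Your proof is correct and takes a genuinely different route from the paper. The paper argues directly in $N_{\mathbb{R}}$: given a candidate wall $\eta = (\tau \setminus \alpha, \beta)$, it expands $u_\beta$ in both the $\sigma(1)$- and $\sigma'(1)$-bases, splits into the cases $D_\alpha \cdot V_\Sigma(\tau) = 0$ and $D_\alpha \cdot V_\Sigma(\tau) > 0$, and within each case runs a sign analysis on the off-wall coefficients $a_\gamma, b_{\gamma'}$ (using the wall relation to transfer information between the two expansions) to force a point of the relative interior of $\eta$ into the relative interior of $\sigma$, $\sigma'$, $\sigma \setminus \alpha$, or $\sigma' \setminus \alpha$, contradicting that cones of a fan meet only along faces. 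Your approach instead quotients by $\Span(\omega)$ to land in a complete two-dimensional fan, where the question becomes the planar statement that only one spoke lies in the open half-plane $\{x > 0\}$; the projected wall relation pins down $\bar u_{\beta'}$ in the closed third quadrant, and then the two adjacent $2$-cones $\bar\sigma, \bar\sigma'$ visibly cover that half-plane. What your reduction buys is a single uniform picture with no case split on the sign of $D_\alpha \cdot V_\Sigma(\tau)$ and no juggling of two basis expansions; what the paper's hands-on argument buys is that it never leaves $N_{\mathbb{R}}$ and makes the role of the specific coefficients in the wall relation explicit, which feeds directly into the finer statements of Corollary \ref{4cycleconstralg} that follow.
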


	\begin{rem} \textbf{(Conditions in lemma vs. our setting) \\}
		Since we are considering nef restrictions of conormal bundles $-D_\alpha|_{D_\alpha}$, the relevant intersection conditions involve walls $\tau$ containing $\alpha$ such that $D_\alpha \cdot V_\Sigma(\tau) \le 0$ (e.g. see Lemma 14-1-7 on p. 421 of \cite{Mat}). So, the only relevant case here is technically $D_\alpha \cdot V_\Sigma(\tau) = 0$. However, we are including the $D_\alpha \cdot V_\Sigma(\tau) > 0$ case mentioned in the reference for completeness. \\
	\end{rem}

	\begin{proof}
		A wall containing $\tau \setminus \alpha$ can be written as $\eta = (\tau \setminus \alpha, \beta)$ for some ray $\beta$. We can write

		\begin{align*}
			\beta &= a_\gamma u_\gamma + \sum_{\omega \in \tau(1)} a_\omega u_\omega \\
			&= b_{\gamma'} u_{\gamma'} + \sum_{\omega \in \tau(1)} b_\omega u_\omega 
		\end{align*}
		
		with respect to the basis expansions from the rays of the two maximal cones $\sigma$ and $\sigma'$ containing $\tau$ (see Part 3 of proof of Corollary \ref{mixvol0toconorm0}). Note that $a_\gamma, b_{\gamma'} \ne 0$ since $\Sigma$ is simplicial and that they have opposite signs. We will take the dual $\alpha^*$ with respect to the second basis from $\sigma'$. \\
		
		This means that an element in relative interior of $\eta$ can be written as
		
		\begin{align*}
			\sum_{\omega \in \tau(1) \setminus \alpha} c_\omega u_\omega + c_\beta u_\beta &=  \sum_{\omega \in \tau(1) \setminus \alpha} c_\omega u_\omega + c_\beta \left( a_\gamma u_\gamma + \sum_{\omega \in \tau(1)} a_\omega u_\omega \right) \\ 
			&= \sum_{\omega \in \tau(1) \setminus \alpha} c_\omega u_\omega + c_\beta \left( b_{\gamma'} u_{\gamma'} + \sum_{\omega \in \tau(1)} b_\omega u_\omega \right).
		\end{align*}
		
		using strictly positive coefficients $c_\omega, c_\beta > 0$ for $\omega \in \tau(1)$. \\

		Before writing down the entire proof, we will give an overview of the main ideas. Since we are considering $\eta = (\tau \setminus \alpha, \beta)$, assigning large values of $c_\omega$ for $\omega \in \tau(1) \setminus \alpha$ compared to $c_\beta$ will force positive coefficients of $u_\omega$ for $\omega \in \tau(1) \setminus \alpha$. Since we are working with the half-space $\{ z \in N_{\mathbb{R}} : \langle \alpha^*, z \rangle > 0 \}$, we have that $b_\alpha > 0$. We also note that $a_\gamma > 0 \Longleftrightarrow b_{\gamma'} < 0$ and $a_\gamma < 0 \Longleftrightarrow b_{\gamma'} > 0$. We will focus on whichever basis expansion has the positive coefficient for the off-wall ray. If $D_\alpha \cdot V_\Sigma(\tau) = 0$, the wall relations would imply that $a_\alpha = b_\alpha$ (from $a_\alpha - b_\alpha = 0$ -- see Part 3 of the proof of Corollary \ref{mixvol0toconorm0}). In particular, this would mean that $a_\alpha > 0$. Now suppose that $D_\alpha \cdot V_\Sigma(\tau) > 0$. If $a_\gamma > 0$, the wall relations would imply that $a_\alpha - b_\alpha > 0 \Longrightarrow a_\alpha > b_\alpha > 0$. If $a_\gamma < 0$, it suffices to consider $b_\alpha > 0$. Putting these together, we always end up with strictly positive coefficients for all the coefficients in the basis expansion and $\beta$ ends up lying in the interior of $\sigma$ or $\sigma'$ respectively. This would contradict the fact that cones of a fan intersect on faces (which lie in the boundary). \\

		We now start writing down the arguments outlined above in terms of explicit basis elements. Since the dual is taken with respect to the basis given by the rays of $\sigma'$ and minimal rays are positive multiples of the original ray, we have that $\langle \alpha^*, u_\omega \rangle = 0$ for all rays $\omega \in \tau(1) \setminus \alpha$. This means that  
		
		\begin{align*}
			\left\langle \alpha^*, \sum_{\omega \in \tau(1) \setminus \alpha} c_\omega u_\omega + c_\beta u_\beta \right\rangle  &= c_\beta (a_\gamma \langle \alpha^*, u_\gamma \rangle + a_\alpha \langle \alpha^*, u_\alpha \rangle). 
		\end{align*}
		
		% Note negative sign from rational equivalence on p. 421 of Matsuki...
		Note that $c_\beta$ doesn't affect the sign since $c_\beta > 0$ and that $\langle \alpha^*, u_\alpha \rangle > 0$ since $u_\alpha$ is a positive multiple of $\alpha$. Since $D_\alpha \cdot V_\Sigma(\tau)$ has the same sign as $-\langle \alpha^*, \gamma \rangle$ and $D_\alpha \cdot V_\Sigma(\tau) \ge 0$ and $u_\gamma$ is a positive multiple of $\gamma$, we have that $\langle \alpha^*, u_\gamma \rangle \le 0$. We split this into two cases. \\
		
		\textbf{Case 1: $D_\alpha \cdot V_\Sigma(\tau) = 0$ (which is equivalent to $-\langle \alpha^*, \gamma \rangle = 0$) \\}
		
		In this case, we have that
		
		\begin{align*}
			\left\langle \alpha^*, \sum_{\omega \in \tau(1) \setminus \alpha} c_\omega u_\omega + c_\beta u_\beta \right\rangle  &= c_\beta (a_\gamma \langle \alpha^*, u_\gamma \rangle + a_\alpha \langle \alpha^*, u_\alpha \rangle) \\
			&= c_\beta a_\alpha \langle \alpha^*, u_\alpha \rangle.
		\end{align*} 
		
		Since $c_\beta > 0$ and $\langle \alpha^*, u_\alpha \rangle > 0$, this inner product is strictly positive if and only if $a_\alpha > 0$. It remains to consider the other coefficients in the expansion 
		
		\begin{align*}
			\beta &= a_\gamma u_\gamma + \sum_{\omega \in \tau(1)} a_\omega u_\omega \\
			&= b_{\gamma'} u_{\gamma'} + \sum_{\omega \in \tau(1)} b_\omega u_\omega. 
		\end{align*}
		
		We focus on the expansion with respect to the first basis. Since $\eta \supset \tau \setminus \alpha$, setting $c_\omega > 0$ sufficiently large for $\omega \in \tau(1) \setminus \alpha$ would mean that the coefficient of $\omega$ in \[ \sum_{\omega \in \tau(1) \setminus \alpha} c_\omega u_\omega + c_\beta u_\beta =  \sum_{\omega \in \tau(1) \setminus \alpha} c_\omega u_\omega + c_\beta \left( a_\gamma u_\gamma + \sum_{\omega \in \tau(1)} a_\omega u_\omega \right) \] is strictly positive regardless of the sign of the coefficient $a_\omega$. Since $a_\alpha > 0$ and $c_\beta > 0$, having $a_\gamma > 0$ would mean that we have a point in the relative interior of $\sigma$. This would be a point of intersection of the relative interior of $\eta = (\tau \setminus \alpha, \beta)$ and that of $\sigma$. However, this is impossible since cones belonging to a fan can only intersect on boundaries (specifically subcones/faces). \\
		
		Suppose that $a_\gamma < 0$. We will compare the two basis expansions 
		
		\begin{align*}
			\beta &= a_\gamma u_\gamma + \sum_{\omega \in \tau(1)} a_\omega u_\omega \\
			&= b_{\gamma'} u_{\gamma'} + \sum_{\omega \in \tau(1)} b_\omega u_\omega
		\end{align*}
		
		and use the reasoning involving the wall relations from Part 3 of the proof of Corollary \ref{mixvol0toconorm0}. Since $D_\alpha \cdot V_\Sigma(\tau) = 0$, the wall relations imply that $a_\alpha = b_\alpha$. We have already shown that $b_\alpha = a_\alpha > 0$. They also imply that $b_{\gamma'} > 0$ if $a_\gamma < 0$. If we set a take $c_\omega > 0$ to be sufficiently large for $\omega \in \tau(1) \setminus \alpha$, then 
		
		\[ \sum_{\omega \in \tau(1) \setminus \alpha} c_\omega u_\omega + c_\beta u_\beta = \sum_{\omega \in \tau(1) \setminus \alpha} c_\omega u_\omega + c_\beta \left( b_{\gamma'} u_{\gamma'} + \sum_{\omega \in \tau(1)} b_\omega u_\omega \right) \]
		
		is a point in the relative interior of $\sigma'$ which is also in the relative interior of $\eta = (\tau \setminus \alpha, \beta)$. Since this is impossible, we cannot have $a_\gamma < 0$. \\
		
		This means that $a_\gamma = 0$, which is equivalent to $b_{\gamma'} = 0$. Then, the ray $\beta$ is in the span of the rays of $\tau$. Suppose that $\eta \ne \tau$. Then, there are some $\ell \in \tau \setminus \alpha$ such that $a_\ell \ne 0$. Then, setting $c_\ell > 0$ sufficiently large for such $\ell \in \tau(1)$ in \[ \sum_{\omega \in \tau(1) \setminus \alpha} c_\omega u_\omega + c_\beta u_\beta =  \sum_{\omega \in \tau(1) \setminus \alpha} c_\omega u_\omega + c_\beta \left(\sum_{\omega \in \tau(1)} a_\omega u_\omega \right) \]
		
		would yield a point in the relative interior of $\tau$ \emph{and} that of $\eta$, which is not possible for two distinct cones of a fan. Thus, $\eta = \tau$ and $\tau$ is the only wall containing $\tau \setminus \alpha$ whose relative interior is contained in $\{ z \in N_{\mathbb{R}} : \langle \alpha^*, z \rangle > 0 \}$. \\

		\textbf{Case 2: $D_\alpha \cdot V_\Sigma(\tau) > 0$  (which is equivalent to $-\langle \alpha^*, \gamma \rangle > 0$ since $D_\alpha \cdot V_\Sigma(\tau) $ is a strictly positive multiple of $-\langle \alpha^*, \gamma \rangle$) \\}

		Recall that 
		
		\begin{align*}
			\left\langle \alpha^*, \sum_{\omega \in \tau(1) \setminus \alpha} c_\omega u_\omega + c_\beta u_\beta \right\rangle  &= c_\beta (a_\gamma \langle \alpha^*, u_\gamma \rangle + a_\alpha \langle \alpha^*, u_\alpha \rangle). 
		\end{align*}
		
		Our assumption above implies that $\langle \alpha^*, \gamma \rangle < 0$. The inner product above is strictly positive if and only if \[ a_\alpha > a_\gamma \frac{-\langle \alpha^*, \gamma \rangle}{\langle \alpha^*, u_\alpha \rangle}. \] 
		
		If $a_\gamma > 0$, this implies that $a_\alpha > 0$. As in Case 1, setting $c_\omega > 0$ sufficiently large for $\omega \in \tau(1) \setminus \alpha$ in \[ \sum_{\omega \in \tau(1) \setminus \alpha} c_\omega u_\omega + c_\beta u_\beta =  \sum_{\omega \in \tau(1) \setminus \alpha} c_\omega u_\omega + c_\beta \left( a_\gamma u_\gamma + \sum_{\omega \in \tau(1)} a_\omega u_\omega \right) \] would produce a point of $\eta$ in the relative interior of $\sigma$, which cannot be contained in a cone of the fan $\Sigma$ other than $\sigma$ itself. \\
		
		Suppose that $a_\gamma < 0$. Then, the lower bound in the inequality \[ a_\alpha > a_\gamma \frac{-\langle \alpha^*, \gamma \rangle}{\langle \alpha^*, u_\alpha \rangle} \] is negative. Recall that we have two basis expansions

		\begin{align*}
			\beta &= a_\gamma u_\gamma + \sum_{\omega \in \tau(1)} a_\omega u_\omega \\
			&= b_{\gamma'} u_{\gamma'} + \sum_{\omega \in \tau(1)} b_\omega u_\omega
		\end{align*}
		
		from the maximal cones containing $\tau$. Note that $b_{\gamma'} > 0$ since $a_\gamma < 0$. \\
		
		In order for the relative interior of $\eta = (\tau \setminus \alpha, \beta)$ to be contained in $\{ z \in N_{\mathbb{R}} : \langle \alpha^*, z \rangle > 0 \}$, we need to have $\langle \alpha^*, \beta \rangle > 0$. This means that $\langle \alpha^*, \beta \rangle = b_\alpha > 0$ since the dual is taken with respect to the second basis. Then, setting $c_\omega > 0$ sufficiently large for $\omega \in \tau(1) \setminus \alpha$ in the strictly positive linear combination \[  \sum_{\omega \in \tau(1) \setminus \alpha} c_\omega u_\omega + c_\beta u_\beta = \sum_{\omega \in \tau(1) \setminus \alpha} c_\omega u_\omega + c_\beta \left( b_{\gamma'} u_{\gamma'} + \sum_{\omega \in \tau(1)} b_\omega u_\omega \right) \] yields a point of $\eta$ in the relative interior of $\sigma'$. This is not possible for a cone other than $\sigma'$ itself. \\
		
		This means that  $a_\gamma = 0$, which is equivalent to having $b_{\gamma'} = 0$. The same reasoning as the end of the proof of Case 1 (after it was shown that $a_\gamma = 0$) then implies that $\eta = \tau$. \\

	\end{proof}
	
	If $D_\alpha \cdot V_\Sigma(\tau) = 0$, there is a stronger condition than what is claimed on p. 421 -- 422 of \cite{Mat}. All sums in the rational equivalence relation modification other than $\tau, \sigma \setminus \alpha$, and $\sigma' \setminus \alpha$ have strictly positive coefficients and all other walls of $\Sigma$ containing $\tau \setminus \alpha$ are contained in the half-space on the ``negative'' side of $\alpha$. In addition, flagness implies that there is only one wall on the negative side of $\alpha^*$ and that it is the unique wall adjacent to $\gamma = \sigma \setminus \tau$ and $\gamma' = \sigma' \setminus \tau$ containing $\tau \setminus \alpha$ on this side of $\alpha^*$. This yields a source of a suspension structure using rational equivalence relations and intersections with 2-dimensional torus orbits in addition to the explicit combinatorial analysis of the fans from Section \ref{wallfan}. \\ 
	
	\begin{cor} \label{4cycleconstralg}
		Let $\Sigma$ be a complete simplicial fan of dimension $d$. Consider a ray $\alpha \in \tau(1)$ such that $D_\alpha \cdot V_\Sigma(\tau) = 0$. Let $\alpha^*$ be the dual of $\alpha$ with respect to a basis formed by the rays of one of the two maximal cones containing $\tau$. 
		
		\begin{enumerate}
			\item If $\eta$ is a wall of $\Sigma$ containing $\tau \setminus \alpha$, all points of the relative interior of $\eta$ are either on one side of the hyperplane defined by $\alpha^*$ or contained in the hyperplane itself. The former refers to one of the half-spaces defined by $\alpha^*$. \\

			\item Suppose that $\eta$ is a wall containing $\tau \setminus \alpha$ that is \emph{not} equal to any of $\tau$, $\sigma \setminus \alpha = (\tau \setminus \alpha, \gamma)$, or $\sigma' \setminus \alpha = (\tau \setminus \alpha, \gamma')$. Note that such a wall exists by completeness. Then, we have that $\eta \subset \{ z \in N_{\mathbb{R}} : \langle \alpha^*, z \rangle < 0 \}$ and $[V_\Sigma(\eta)] \in \mathbb{R}_{\ge 0}[V_\Sigma(\tau)]$. \\
			
			\item Suppose that we are in the setting of Part 2. If the cones of $\Sigma$ form a flag simplicial complex (e.g. if $\Sigma$ is locally convex), then $\eta$ is the unique wall containing $\tau \setminus \alpha$ that is adjacent to $\gamma = \sigma \setminus \tau$ and $\gamma' = \sigma' \setminus \tau$ whose relative interior is on the negative side of the half-space defined by $\alpha^*$. The restrictions of the ray divisors to $V_\Sigma(\tau \setminus \alpha)$ satisfy the following properties: \\
			\begin{itemize}
				\item $D_\beta|_{V_\Sigma(\tau \setminus \alpha)} \in \mathbb{R}_{\ge 0} [V_\Sigma(\tau)]$ \\
				
				\item  $D_\alpha|_{V_\Sigma(\tau \setminus \alpha)} \in \mathbb{R}_{\ge 0} [V_\Sigma(\tau)]$ is \emph{not} a scalar multiple of $D_\gamma|_{V_\Sigma(\tau \setminus \alpha)}$ or $D_{\gamma'}|_{V_\Sigma(\tau \setminus \alpha)}$. \\
				
				\item $D_{\gamma'}|_{V_\Sigma(\tau \setminus \alpha)} \in \Span (D_\alpha|_{V_\Sigma(\tau \setminus \alpha)}, D_\gamma|_{V_\Sigma(\tau \setminus \alpha)})$ and the coefficient of $D_\gamma|_{V_\Sigma(\tau \setminus \alpha)})$ in the linear combination is negative. \\
			\end{itemize} 
		\end{enumerate}

	\end{cor}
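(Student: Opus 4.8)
The plan is to reduce all three parts to the $2$-dimensional complete toric surface $X_{\lk_\Sigma(\tau\setminus\alpha)}$ obtained by passing to the quotient by $\Span(\tau\setminus\alpha)$: there the walls $\eta\supset\tau\setminus\alpha$ become the rays $\bar\beta$, the maximal cones containing $\tau\setminus\alpha$ become the $2$-cones, and $V_\Sigma(\tau)$ becomes the curve $D_{\bar\alpha}$ of the ray $\bar\alpha$. In this picture the hypothesis $D_\alpha\cdot V_\Sigma(\tau)=0$ says $D_{\bar\alpha}$ has self-intersection $0$; via the wall relation $\sum_\rho(D_\rho\cdot V_\Sigma(\tau))u_\rho=0$ it forces $(D_\gamma\cdot V_\Sigma(\tau))u_{\bar\gamma}+(D_{\gamma'}\cdot V_\Sigma(\tau))u_{\bar{\gamma'}}=0$ with both coefficients positive (local convexity), so $\bar\gamma,\bar{\gamma'}$ span a line and $D_{\bar\alpha}$ is nef. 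I would record once, using the $\sigma$-basis expansion from the proof of Corollary \ref{mixvol0toconorm0}, that $\langle\alpha^*,u_\omega\rangle=0$ for $\omega\in\tau(1)\setminus\alpha$ and $\langle\alpha^*,u_\alpha\rangle>0$, while $\langle\alpha^*,u_\gamma\rangle=\langle\alpha^*,u_{\gamma'}\rangle=0$; the vanishing for $\gamma'$ is exactly that its $u_\alpha$-coefficient equals $-(D_\alpha\cdot V_\Sigma(\tau))/(D_{\gamma'}\cdot V_\Sigma(\tau))=0$.

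Part 1 is then immediate: a relative interior point of $\eta=(\tau\setminus\alpha,\beta)$ has the form $\sum_{\omega\in\tau(1)\setminus\alpha}c_\omega u_\omega+c_\beta u_\beta$ with all $c_\bullet>0$, and pairing with $\alpha^*$ kills every $u_\omega$, leaving $c_\beta\langle\alpha^*,u_\beta\rangle$, whose sign is constant; hence $\operatorname{relint}(\eta)$ lies in one open half-space of $\{\langle\alpha^*,\cdot\rangle=0\}$ or in the hyperplane. For Part 2 I would combine this with Lemma \ref{relintuniq}: since $\tau$ is the unique wall through $\tau\setminus\alpha$ on the positive side, every other such wall is nonpositive, and the walls lying \emph{in} $\alpha^\perp$ are exactly $(\tau\setminus\alpha,\gamma)=\sigma\setminus\alpha$ and $(\tau\setminus\alpha,\gamma')=\sigma'\setminus\alpha$, because the line $\alpha^\perp$ carries only the two rays $\bar\gamma,\bar{\gamma'}$. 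Thus any $\eta\ne\tau,\sigma\setminus\alpha,\sigma'\setminus\alpha$ is strictly negative, and completeness forces a ray of $\Sigma$ into the open negative half-space, giving existence.

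The proportionality $[V_\Sigma(\eta)]\in\mathbb{R}_{\ge0}[V_\Sigma(\tau)]$ is the crux. Since $D_{\bar\alpha}$ is nef with $D_{\bar\alpha}^2=0$, it is globally generated and defines a fibration $X_{\lk_\Sigma(\tau\setminus\alpha)}\to\mathbb{P}^1$ with fibre class $[V_\Sigma(\tau)]$, and every negative-side curve meets $D_{\bar\alpha}$ in $0$, so is supported in a fibre. A negative-side $V_\Sigma(\eta)$ is a \emph{nonnegative multiple} of the fibre precisely when the opposite fibre is irreducible, i.e. when $\gamma,\gamma'$ have a single common neighbour on the negative side; this is the one point that genuinely needs flagness and is where I expect the real difficulty to lie. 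Under flagness (Part 3) I would identify this opposite fibre with the wall produced by Lemma \ref{zeroconorm4cycle}: that lemma already gives a ray $\alpha'$ with $(\tau\setminus\alpha,\alpha',\gamma),(\tau\setminus\alpha,\alpha',\gamma')\in\Sigma$, with $V_\Sigma(\tau')\in\mathbb{R}_+[V_\Sigma(\tau)]$ for $\tau'=(\tau\setminus\alpha,\alpha')$, and with the induced $4$-cycle $\alpha,\gamma,\alpha',\gamma'$, so that $\eta=\tau'$ and $\beta=\alpha'$.

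The three restriction identities of Part 3 then follow by transporting the surface picture back to $X_\Sigma$. Property $1$ is Lemma \ref{zeroconorm4cycle}(2) applied to $\tau'$, giving $D_\beta|_{V_\Sigma(\tau\setminus\alpha)}=V_\Sigma(\tau')\in\mathbb{R}_{\ge0}[V_\Sigma(\tau)]$. Property $2$ uses $D_{\bar\alpha}\cdot D_{\bar\gamma}>0=D_{\bar\alpha}^2$: a scalar relation between $D_\alpha|_{V_\Sigma(\tau\setminus\alpha)}$ and $D_\gamma|_{V_\Sigma(\tau\setminus\alpha)}$ or $D_{\gamma'}|_{V_\Sigma(\tau\setminus\alpha)}$ would force a positive intersection number to vanish, so no such relation exists. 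Property $3$ comes from the wall relation at $\tau$ restricted to $V_\Sigma(\tau\setminus\alpha)$: as $D_\alpha\cdot V_\Sigma(\tau)=0$ deletes the $\alpha$-term, the relation expresses $D_{\gamma'}|_{V_\Sigma(\tau\setminus\alpha)}$ through $D_\alpha|_{V_\Sigma(\tau\setminus\alpha)}$ and $D_\gamma|_{V_\Sigma(\tau\setminus\alpha)}$, the sign of the $D_\gamma$-coefficient being dictated by the positivity of the wall coefficients and by $\gamma,\gamma'$ sitting on opposite sides of $\alpha^\perp$. The main obstacle throughout is isolating precisely the flagness input that makes the opposite fibre irreducible (equivalently, that the negative wall is \emph{unique} and is the common neighbour of $\gamma$ and $\gamma'$); once that reduction to Lemma \ref{zeroconorm4cycle} is secured, everything else reduces to the short linear-algebra and wall-relation computations above.
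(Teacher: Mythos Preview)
Your approach is largely parallel to the paper's, but packaged in the $2$-dimensional surface $X_{\lk_\Sigma(\tau\setminus\alpha)}$ language rather than the paper's direct wall-relation linear algebra. For Part~1 the arguments are essentially identical: both compute $\langle\alpha^*,\,\cdot\,\rangle$ on a relative-interior point and observe the sign is controlled by $\langle\alpha^*,u_\beta\rangle$ alone. For the strict-negativity half of Part~2 the paper does a short case analysis on $b_\alpha$ (if $b_\alpha=0$ one produces a relative-interior intersection with $\sigma\setminus\alpha$ or $\sigma'\setminus\alpha$; if $b_\alpha>0$ one lands in $\tau$), while you invoke Lemma~\ref{relintuniq} plus the observation that the antipodal pair $\bar\gamma,\bar{\gamma'}$ exhausts the hyperplane $\alpha^\perp$ in the quotient; these are the same computation in different dress. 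One small correction: the positivity of $D_\gamma\cdot V_\Sigma(\tau)$ and $D_{\gamma'}\cdot V_\Sigma(\tau)$ comes from $\gamma,\gamma'$ being the off-wall rays of a simplicial wall, not from local convexity.

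Where your proposal and the paper genuinely diverge is the proportionality $[V_\Sigma(\eta)]\in\mathbb{R}_{\ge 0}[V_\Sigma(\tau)]$ in Part~2. You argue via the nef class $D_{\bar\alpha}$ with $D_{\bar\alpha}^2=0$ and the resulting fibration to $\mathbb{P}^1$, then correctly note that an \emph{individual} negative-side curve is a nonnegative multiple of the fibre only when that fibre is irreducible, and that this is where flagness enters. The paper instead writes down the rational-equivalence relation $\sum_\rho\langle\alpha^*,u_\rho\rangle D_\rho\sim 0$, restricts to $V_\Sigma(\tau\setminus\alpha)$, and then invokes the \emph{assumption} that $[V_\Sigma(\tau)]$ is extremal (citing Matsuki p.~421--422) to split the resulting positive combination. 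That extremality is not among the stated hypotheses of the corollary, so you have in fact identified a genuine gap between the statement and the proof; your fibration picture and the paper's extremality input are two ways of naming the same missing hypothesis, and both are supplied once one passes to the flag setting of Part~3.

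For Part~3, the paper's route to the span relation is a Betti-number count: flagness forces the link to be a $4$-gon, so $h_1=2$, and since $D_\alpha|,D_\gamma|$ are not proportional (from the second bullet) they span $A^1(V_\Sigma(\tau\setminus\alpha))$. Your route via the rational-equivalence relation on the surface also works, but note that the relation coming from $m\perp\bar\alpha$ involves $D_{\bar\beta}$ as well; you must feed in the first bullet ($D_{\bar\beta}$ proportional to $D_{\bar\alpha}$) to eliminate it before reading off the span. Both approaches reach the same endpoint; the paper's dimension count is slightly shorter, while yours makes the coefficients more explicit.
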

	
	\begin{proof}
		
		\begin{enumerate}
			\item Let $\eta = (\tau \setminus \alpha, \beta)$ be a wall of $\Sigma$ containing $\tau \setminus \alpha$. As in the proof of Lemma \ref{relintuniq}, we can write

			\begin{align*}
				\beta &= a_\gamma u_\gamma + \sum_{\omega \in \tau(1)} a_\omega u_\omega \\
				&= b_{\gamma'} u_{\gamma'} + \sum_{\omega \in \tau(1)} b_\omega u_\omega 
			\end{align*}
			
			with respect to the basis expansions from the rays of the two maximal cones $\sigma$ and $\sigma'$ containing $\tau$ from Part 3 of proof of Corollary \ref{mixvol0toconorm0}. \\
			
			We will also take the dual basis with respect to the second basis and denote the dual element of $\alpha$ by $\alpha^*$. In this notation, we have $D_\alpha \cdot V_\Sigma(\tau) = -\langle \alpha^*, \gamma \rangle = 0$. Since $\langle \alpha^*, \rho \rangle = 0$ for any ray $\rho \in \sigma'(1) \setminus \alpha$, we have that $\langle \alpha^*, \rho \rangle = 0$ for any ray $\rho \in \tau \setminus \alpha$. This means that the signs of $\langle \alpha^*, z \rangle$ for points $z$ in the relative interior of $\eta$ are all equal to the sign of $\langle \alpha^*, \beta \rangle = b_\alpha$. In particular, this means that the sign of $\langle \alpha^*, z \rangle$ is the same for all points $z$ of the relative interior of $\eta$. \\

			\item Let $\eta = (\tau \setminus \alpha, \beta)$ be a wall containing $\tau \setminus \alpha$ that is \emph{not} equal to $\tau$, $\sigma \setminus \alpha = (\tau \setminus \alpha, \gamma)$, or $\sigma' \setminus \alpha = (\tau \setminus \alpha, \gamma')$. Suppose that $\langle \alpha^*, \beta \rangle = b_\alpha = 0$. Then, we have that $b_{\gamma'} \ne 0$ since $\beta \notin \tau(1)$ and $\Sigma$ is simplicial. If $b_{\gamma'} > 0$, then setting $c_\omega > 0$ sufficiently large for $\omega \in \tau(1) \setminus \alpha$ in the strictly positive linear combination \[  \sum_{\omega \in \tau(1) \setminus \alpha} c_\omega u_\omega + c_\beta u_\beta = \sum_{\omega \in \tau(1) \setminus \alpha} c_\omega u_\omega + c_\beta \left( b_{\gamma'} u_{\gamma'} + \sum_{\omega \in \tau(1) \setminus \alpha} b_\omega u_\omega \right) \] yields a point of $\eta = (\tau \setminus \alpha, \beta)$ in the relative interior of $\sigma' \setminus \alpha$. This is not possible for a cone other than $\sigma' \setminus \alpha$ itself. \\

			If $b_{\gamma'} < 0$, then $a_\gamma > 0$. The assumption that $D_\alpha \cdot V_\Sigma(\tau) = 0$ implies that $a_\alpha = b_\alpha = 0$ since \[ b_\alpha = a_\alpha - \frac{D_\alpha \cdot V_\Sigma(\tau)}{D_\gamma \cdot V_\Sigma(\tau)} a_\gamma = a_\alpha \] by the wall relation (see Part 3 of proof of Corollary \ref{mixvol0toconorm0}). Then, setting $c_\omega > 0$ sufficiently large for $\omega \in \tau(1) \setminus \alpha$ in the strictly positive linear combination \[ \sum_{\omega \in \tau(1) \setminus \alpha} c_\omega u_\omega + c_\beta u_\beta =  \sum_{\omega \in \tau(1) \setminus \alpha} c_\omega u_\omega + c_\beta \left( a_\gamma u_\gamma + \sum_{\omega \in \tau(1) \setminus \alpha} a_\omega u_\omega \right) \] would give a point of $\eta$ in the relative interior of $\sigma \setminus \alpha$. Again, this is not possible since cones of a fan only intersect on boundaries (specifically faces). This means that $\langle \alpha^*, \beta \rangle = b_\alpha \ne 0$ and $\eta$ must be contained in the interior of one of the half-spaces defined by $\alpha^*$. If $b_\alpha > 0$, then $\eta = (\tau \setminus \alpha, \beta)$ would contain an interior point of $\tau$. This is impossible unless $\eta = \tau$. Thus, we have that $b_\alpha < 0$ and $\eta \subset \{ z \in N_{\mathbb{R}} : \langle \alpha^*, z \rangle < 0 \}$. Such a wall containing $\tau \setminus \alpha$ exists by completeness. \\

			Finally, the statement that $[V_\Sigma(\eta)] \in \mathbb{R}_{\ge 0} [V_\Sigma(\tau)]$ follows from multiplying the rational equivalence relation specialized to inner products with $\alpha^*$, multiplying by the 2-dimensional closed orbit $V_\Sigma(\tau \setminus \alpha)$, and using the assumption that $[V_\Sigma(\tau)]$ is extremal as in p. 421 -- 422 of \cite{Mat}. \\

			\item  If the cones in $\Sigma$ form a flag simplicial complex, then the subcollection in $\lk_\Sigma(\tau \setminus \alpha)$ yields a 2-dimensional simplicial fan/1-dimensional flag sphere. This must be an $n$-gon for some $n \ge 4$. There is a wall containing $\tau \setminus \alpha$ that is adjacent to $\gamma$ and $\gamma'$ while being on the negative side of the half-space defined by $\alpha^*$ (Part(ii) of Proposition 14-1-5 on p. 419 -- 420 of \cite{Mat}). The ray $\beta \in \lk_\Sigma(\tau \setminus \alpha)$ inducing such a wall is adjacent to both $\gamma$ and $\gamma'$. Such a ray can only exist in $\lk_\Sigma(\tau \setminus \alpha)$ if $n = 4$. \\

			As for the restrictions of classes, we start with $(f_0, f_1) = (n, n)$. The Dehn--Sommerville relations imply that the Betti numbers are $(h_0, h_1, h_2) = (1, n - 2, 1)$ (Corollary 5.1.9 on p. 213 of \cite{BH}). It was already known that $D_\beta|_{V_\Sigma(\tau \setminus \alpha)} \in \mathbb{R}_{\ge 0} [V_\Sigma(\tau)]$ (Part(ii) of Proposition 14-1-5 on p. 419 -- 420 of \cite{Mat}). We take a closer look structure of the classes beyond the dimension. First of all, the classes $D_\gamma|_{V_\Sigma(\tau \setminus \alpha)}$ and $D_{\gamma'}|_{V_\Sigma(\tau \setminus \alpha)}$ are \emph{not} scalar multiples of $D_\alpha|_{V_\Sigma(\tau \setminus \alpha)}$ since $D_\alpha \cdot V_\Sigma(\tau) = 0$ while $D_\gamma \cdot V_\Sigma(\tau) > 0$ and $D_{\gamma'} \cdot V_\Sigma(\tau) > 0$. If the earlier restrictions were equal, then multiplying by $D_\alpha$ on both sides of each pair would have yielded equalities. Since $n = 4$, the Betti numbers are given by $(h_0, h_1, h_2) = (1, 2, 1)$. This means that $D_{\gamma'}|_{V_\Sigma(\tau \setminus \alpha)} \in \Span (D_\alpha|_{V_\Sigma(\tau \setminus \alpha)}, D_\gamma|_{V_\Sigma(\tau \setminus \alpha)})$. In addition, the coefficient of $D_\gamma|_{V_\Sigma(\tau \setminus \alpha)})$ in the linear combination is negative since $V_\Sigma(\tau) \cong \mathbb{P}^1$. \\

		\end{enumerate}

	\end{proof}
	
	In the context of intersection patterns of induced 4-cycles, we note that another way of thinking about the mixed volume conditions from Corollary \ref{mixvol0toconorm0} is to view it as a result about intersection patterns of extremal walls of links of $p$-cones of $\Sigma$ for $1 \le p \le \frac{d}{2}$ coming from top degree monomials with these support sizes whose ray divisors have even global exponents/odd local exponents on the restrictions. \\

	\color{black} 
			
\end{document}